\def\ps@pprintTitle{%
 \let\@oddhead\@empty
 \let\@evenhead\@empty
 \def\@oddfoot{}%
 \let\@evenfoot\@oddfoot}
\newcommand\reallywidehat[1]{%
\savestack{\tmpbox}{\stretchto{%
  \scaleto{%
    \scalerel*[\widthof{\ensuremath{#1}}]{\kern-.6pt\bigwedge\kern-.6pt}%
    {\rule[-\textheight/2]{1ex}{\textheight}}
  }{\textheight}%
}{0.5ex}}%
\stackon[1pt]{#1}{\tmpbox}%
}
\newtheorem{theorem}{Theorem}
\newtheorem{lemma}{Lemma}
\newtheorem{proposition}[theorem]{Proposition}
{\theoremstyle{remark} \newtheorem{remark}{Remark}}
\def\C{\mathbb{C}}
\def\Z{\mathbb{Z}}
\def\ii{\mathrm{i}}
\def\gb{\boldsymbol{g}}
\def\eb{\boldsymbol{e}}
\def\yb{\boldsymbol{y}}
\def\sb{\boldsymbol{s}}
\def\xib{\boldsymbol{\xi}}
\def\etab{\boldsymbol{\eta}}
\newcommand\TT{\mathrm{T}}
\newcommand\ee{\mathrm{e}}
\newcommand\FF{\mathrm{F}}
\newcommand{\mc}[1]{\mathcal{#1}}
\def\E{\mathbb{E}}
\def\Is{\mathcal{I}}
\def\Js{\mathcal{J}}
\def\Ks{\mathcal{K}}
\def\Ls{\mathcal{L}}
\def\Gs{\mathcal{G}}
\def\Ds{\mathcal{D}}
\def\Fs{\mathcal{F}}
\def\Us{\mathcal{U}}
\def\Hs{\mathcal{H}}
\def\Ns{\mathcal{N}}
\newcommand\mQ{\mathcal{Q}}
\newcommand{\mf}[1]{\mathfrak{#1}}
\def\bdG{\mathscr{G}}
\def\bdL{\mathscr{L}}
\def\bdH{\mathscr{H}}
\def\bdW{\mathscr{W}}
\newcommand{\redtext}[1]{{\color{red}#1}}
\newcommand\Sarr{s_{\uparrow}}
\def\sa{s_{\mathrm{f}}}
\def\si{s_{\mathrm{i}}}
\def\sf{s_{\mathrm{f}}}
\def\sgn{\text{sgn}}
\def\std{\text{std}}
\def\bG{\bar{G}}
\def\tG{\widetilde{G}}
\def\dG{\Delta G}
\def\bg{\bar{\gb}}
\def\tg{\widetilde{\gb}}
\def\hg{\vec{\gb}}
\def\htg{\vec{\tg}}
\def\bK{\bar{K}}
\def\tK{\widetilde{K}}
\def\bk{\bar{k}}
\def\tk{\widetilde{k}}
\def\Ge{G_{\text{e}}}
\def\ge{\gb^{\text{e}}}
\def\ges{\gb^{\text{e}*}}
\def\tu{\widetilde{u}}
\def\bu{\bar{u}}
\def\rkn{|u_{\mathrm{RK}}|_{\mathrm{std}}}
\DeclareMathOperator\Var{Var}
\begin{document}

\begin{frontmatter}

\title{Numerical analysis for inchworm Monte Carlo method: Sign problem and error growth}

\author[]{Zhenning Cai\fnref{fn1}}
\ead{matcz@nus.edu.sg}

\author[]{Jianfeng Lu\fnref{fn2}}
\ead{jianfeng@math.duke.edu}

\author[]{Siyao Yang\fnref{fn1}}
\ead{siyao_yang@u.nus.edu}

\fntext[fn1]{Department of Mathematics, National University of
  Singapore, Level 4, Block S17, 10 Lower Kent Ridge Road, Singapore 119076.}
\fntext[fn2]{Department of Mathematics, Department of Physics, and
  Department of Chemistry, Duke University, Box 90320, Durham NC 27708, USA.}

\begin{abstract}
We consider the numerical analysis of the inchworm Monte Carlo method, which is proposed recently to tackle the numerical sign problem for open quantum systems. We focus on the growth of the numerical error with respect to the simulation time, for which the inchworm Monte Carlo method shows a flatter curve than the direct application of Monte Carlo method to the classical Dyson series. To better understand the underlying mechanism of the inchworm Monte Carlo method, we distinguish two types of exponential error growth, which are known as the numerical sign problem and the error amplification. The former is due to the fast growth of variance in the stochastic method, which can be observed from the Dyson series, and the latter comes from the evolution of the numerical solution. Our analysis demonstrates that the technique of partial resummation can be considered as a tool to balance these two types of error, and the inchworm Monte Carlo method is a successful case where the numerical sign problem is effectively suppressed by such means. We first demonstrate our idea in the context of ordinary differential equations, and then provide complete analysis for the inchworm Monte Carlo method. Several numerical experiments are carried out to verify our theoretical results.
\end{abstract}

\begin{keyword}
 Open quantum system \sep inchworm Monte Carlo method \sep numerical sign problem \sep error growth 
\end{keyword}

\end{frontmatter}

\section{Introduction}
In quantum mechanics, an open quantum system refers to a quantum system interacting with the environment. In reality, no quantum system is absolutely isolated, and therefore the theory of open quantum systems has wide applications including quantum thermodynamics \cite{Esposito2009}, quantum information science \cite{Shor1995}, and quantum biology \cite{Asano2016}. Due to interaction with the environment, the quantum system is irreversible \cite{Manicino2018}, and its master equation can be obtained by Nakajima-Zwanzig projection technique \cite{Nakajima1958, Zwanzig1960}, which is an integro-differential equation showing that the dynamics is non-Markovian. When the coupling between the quantum system and the environment is weak, Markovian approximation can be used to simplify the simulation \cite{Lindblad1976}, while for non-Markovian simulations, one needs to apply more expensive methods such as QuAPI (quasi-adiabatic propagator path integral) \cite{Makri1992,Makri1993} and HEOM (hierarchical equations of motion) \cite{Ishizaki2005}. In this paper, we are interested in the numerical analysis for the inchworm algorithm \cite{Chen2017, Cai2020}, which is a recently proposed diagrammatic Monte Carlo method for open quantum system. The inchworm algorithm was originally proposed in \cite{Cohen2015} for impurity models. In \cite{Cai2020}, the method is recast in a continuous form as an integro-differential equation, so that classical numerical techniques can be applied.

In the integro-differential equation formulation of the inchworm method, the time derivative of the propagator is written as an expression involving an infinite series and high-dimensional integrals. Therefore, the numerical method involves both a Runge-Kutta part for time marching  and a Monte Carlo part to deal with the series and integrals. In abstract form, we write the equation as
\begin{equation} \label{eq:general equation}
\frac{\dd u}{\dd t} = \mathit{RHS} = \E_X R(X),
\end{equation}
where $\E_X$ denotes the expectation with respect to the random variable $X$, and both $\mathit{RHS}$ and $R(X)$ are functions of the solution $u$. While we motivate \eqref{eq:general equation} using the inchworm method, such type of equations also arises in many other contexts, and thus our analysis applies in a wider context.

Consider using the forward Euler method as the time integrator, combined with Monte Carlo estimate of the $\mathit{RHS}$, the numerical scheme is
\begin{equation} \label{eq:forward Euler}
u_{n+1} = u_n + \frac{h}{N_s} \sum_{i=1}^{N_s} R(X_i^{(n)}),
\end{equation}
where $h$ is the time step, and $X_i^{(n)}$ are random variables drawn from the probability distribution of $X$. Such a scheme is highly related to a number of existing methods such as the Direct simulation Monte Carlo \cite{Bird1963, Bird1994}, stochastic gradient descent method \cite{Zhang2004}, and the random batch method \cite{Jin2020}. The qDRIFT method proposed in \cite{Campbell2019} is also a variant of \eqref{eq:forward Euler} by replacing the forward Euler method with an exact solver in the context of Hamiltonian simulation. The scheme \eqref{eq:forward Euler} can be easily extended to general Runge-Kutta methods, which is found in \cite{Cai2020} to be useful in the simulation of open quantum systems. The numerical analysis of such a method has been carried out for differential-type equations in several cases \cite{Li2017, Hu2019, Jin2020}. When such methods are applied to systems with dissipation \cite{Li2017,Li2020}, the numerical error can be well controlled by the intrinsic property of the system for long-time simulations. However, in quantum mechanics, where the propagators remain unitary for any $t$, it is often seen that the error grows rapidly with respect to time in the real-time simulations \cite{Cai2018,MacKernan2002}, as is known as the ``numerical sign problem'', or more specifically the ``dynamical sign problem'' in the context of open quantum systems \cite{Muhlbacher2008,Werner2009,Schiro2010}.

The purpose of the inchworm Monte Carlo method is to mitigate the numerical sign problem when simulating the open quantum system by Dyson series expansion \cite{Chen2017,Cai2020}. The numerical sign problem, which will be further elaborated in \S~\ref{nsp}, refers to the stochastic error when applying Monte Carlo method to estimate the sum or the integral of highly oscillatory, high-dimensional functions. This is an intrinsic and notorious difficulty for simulating many-body quantum systems, such as in condensed matter physics \cite{Loh1990} and lattice field theory \cite{Cristoforetti2012}. For open quantum systems, the numerical sign problem 
becomes more severe as the simulation time gets longer 
\cite{Muhlbacher2008,Werner2009,Schiro2010}. Specifically, the average numerical error is proportional of the exponential of $t^2$, as introduces great difficulty for long time simulations. Inchworm Monte Carlo method adopts the idea of ``partial resummation'', and has successfully reduced the numerical error in a number of applications \cite{Dong2017,Ridley2018,Eidelstein2020}. However, as mentioned in \cite{Chen2017b}, it is not totally clear how the inchworm Monte Carlo method mitigates the numerical sign problem, despite some intuition coming from the idea of partial resummation. In this work, our aim is to demystify such mechanism by a deep look into the evolution of the numerical error. We find that the error of the inchworm Monte Carlo method grows as the exponential of a polynomial of $t$. However, the source of such error growth is not the numerical sign problem. The reason of the fast growth mainly comes from the amplification of the error at previous time steps, which is more similar to the error amplification in Runge-Kutta methods for ordinary differential equations. By separating these two types of error growth--- numerical sign problem and error amplification, we find that partial resummation can be regarded as a tool to trade-off the two types of error, as may help flatten the error growth curve in certain cases. We hope this also helps understand a more general class of iterative numerical methods for computing summations \cite{Makri1995a,Prokof'ev2007,Makri2017,Li2019}.

In fact, such understanding of error balance can be already revealed in the context of ODEs, which we will first focus on to save the involved notations in the inchworm Monte Carlo method. Thus, in Section~\ref{sec: diff eq}, we carry out the error analysis of differential equations for general Runge-Kutta methods with Monte Carlo evaluation of the right-hand side. The results are of independent interests, and the analysis also serves as a simple context to understand how partial resummation transforms the mechanism of error growth from numerical sign problem to error amplification. Afterwards, a detailed analysis for the inchworm Monte Carlo method will be given, which reveals the behavior of the error growth in the inchworm Monte Carlo method, and explains whether/how it relaxes the numerical sign problem. In Section \ref{sec: inchworm intro}, we introduce the inchworm Monte Carlo method based on an integro-differential equation formulation, and we present the corresponding main results of numerical analysis and their implication in Section \ref{sec: int diff eq}. Our analytical results are verified by several numerical tests in Section \ref{sec: numer exp}, showing the agreement between the theory and the experiments.
The rigorous proofs for the error analysis of differential equation and inchworm Monte Carlo equation are later given in Section \ref{sec: proof diff eq} and Section \ref{sec: proof} respectively. Finally, some concluding remarks are given in Section \ref{sec: conclusion}.

\section{A stochastic numerical method for differential equations}
\label{sec: diff eq}
To demonstrate the methodology of numerical analysis for the equations with the form \eqref{eq:general equation}, we first consider the simple case of an ordinary differential equation:
\begin{equation}\label{eq: diff eq}
\frac{\dd u}{\dd t} = f(t,u(t)), \quad t \in [0,T],
\end{equation}
where $u:[0,T] \rightarrow \C^{d}$ and the right-hand side $f$ is $(p+1)$-times continuously differentiable. A general $s$-stage explicit Runge-Kutta method of order $p$ reads 
\begin{equation}\label{def: scheme diff rk}
\begin{aligned}
& u_{n+1} = u_n + h \sum^s_{i=1} b_i k_i, \\
& k_i = f\Big(t_n + c_i h,u_n + h\sum^{i-1}_{j= 1}a_{ij}k_j \Big), \quad i = 1,\cdots,s.
\end{aligned}
\end{equation}
For simplicity, we assume that the time step $h = T/N$ is smaller than $1$, and $u_0$ is given by the initial condition $u_0 = u(0)$. The error estimation of Runge-Kutta methods is standard and can be found in textbooks such as \cite{Hairer:1993:SOD:153158}.

As mentioned in the introduction, we now consider a special scenario where the right-hand side of this equation can be represented as the expectation of a stochastic variable:
\begin{equation} 
\label{def: g}
f(t,u) = \E_X [g(t,u,X)], \qquad \forall t, u
\end{equation}
with $X$ being a random variable subject to a given distribution. Inspired by the Runge-Kutta method, we consider the following numerical scheme:
\begin{equation}\label{def: scheme diff mc}
\tu_{n+1} = \tu_n + h \sum^s_{i=1} b_i \tk_i,~0 \le n \le N
\end{equation}
where 
\begin{equation}
\tk_i = \frac{1}{N_s} \sum^{N_s}_{l = 1} g\Big(t_n + c_i h, \tu_n + h \sum^{i-1}_{j = 1} a_{ij} \tk_j,X^{(i)}_l\Big)
\end{equation}
with the initial condition $\tu_0 = u(0)$. Here $X^{(i)}_l$ are independent samples generated from the probability distribution of $X$.  

In this section, we will look for the gap between these two numerical methods \eqref{def: scheme diff rk} and  \eqref{def: scheme diff mc}. In specific, we aim to bound the \emph{bias} $\|\E(u_N-\tu_N)\|_2$ and the \emph{numerical error} $[\E(\|u_N-\tu_N\|_2^2)]^{1/2}$. By combining these errors with the error estimation of Runge-Kutta methods, the final error bounds can be obtained simply by triangle inequality:
\begin{align*}
\|\E(u(T)-\tu_N)\|_2 &\le
\|u(T) - u_N\|_2 + \|\E(u_N - \tu_N)\|_2, \\
[\E(\|u(T)-\tu_N\|^2_2)]^{1/2} &\le
\|u(T) - u_N\|_2 + [\E(\|(u_N - \tu_N)\|^2_2)]^{1/2},
\end{align*}
where $\|u(T) - u_N\|_2$ is the numerical error for the standard Runge-Kutta method, whose analysis can be found in a number of textbooks.

\subsection{Main results for differential equations}
\label{sec: diff results}
In this section, we will list the main results of our error analysis. The results are based on the following working hypothesis:
\begin{equation}
\label{assump: bd} \|\nabla_u g^{(m)}(t,u,X)\|_2 \le M', \quad \|\nabla_u f^{(m)}(t,u) \|_2 \le M', \quad \|\nabla_{u}^2 f^{(m)}(t,u)\|_{\FF} \le M'',
\end{equation}
where $f^{(m)}$ denotes the $m$th component of $f$, and $M$, $M'$ and $M''$ are constants independent of $t$, $u$ and $X$. For the Runge-Kutta solution, we define
\begin{gather*}
|u^{(m)}_{\text{RK}}|_{\text{std}}  = \max_n \max_{i=1,\cdots,s} \sqrt{\Var g^{(m)}\Big(t_n + c_i h, u_n + h \sum_{j=1}^{i-1} a_{ij} k_j, X \Big)} \redtext{,} \\
\rkn = \left( \sum_{m=1}^d |u^{(m)}_{\text{RK}}|^2_{\text{std}} \right)^{\frac{1}{2}} . 
\end{gather*}
For simplicity, below we use $R$ to denote the upper bound of all the coefficients appearing in the Runge-Kutta method \eqref{def: scheme diff rk}. Precisely, we assume that
\begin{equation}\label{assump: rk bd}
|a_{ij}|,|b_i|,|c_i|\le R \text{~for all~}i,j.
\end{equation}
Thus, the recurrence relations of both the bias and the numerical error can be established as:
\begin{proposition}\label{thm: diff recurrence relations}
Given a sufficiently small time step length $h$ and a sufficiently large number of samples at each step $N_s$. If the boundedness assumptions \eqref{assump: bd} hold, we have the recurrence relations 
\begin{equation}\label{eq: diff recurrence 1}
\|\E(u_{n+1}-\tu_{n+1})\|_2 \le \ (1+\alpha h)\|\E(u_{n}-\tu_{n})\|_2 + \alpha h \Big( \E\big(\|u_n -\tu_n\|_2^2\big) + \frac{h^2}{N_s} \rkn^2 \Big)
\end{equation}
and
\begin{equation}\label{eq: diff recurrence 2}
\E(\|u_{n+1}-\tu_{n+1}\|_2^2) \le (1+\beta h )  \E(\|u_{n}-\tu_{n}\|_2^2) + \beta \Big( \frac{h^2}{N_s} \rkn^2 + \frac{\alpha^2 h^5}{s^2 R^2 N_s^2} \rkn^4 \Big),
\end{equation}
where $\alpha = 2^s sR\sqrt{d} \max(M', 2 s M'', 2^{s+2} s^3 M'^2 M'' R^2, 2^{s+1} s^2M'' R^2)$\\ and $\beta = \max(4+2^{s+3}M'^2 d R^2 s^3, 2^{s+1} R^2 s^2)$.
\end{proposition}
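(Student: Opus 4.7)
The plan is to analyze the error stage by stage. Set $\Delta u_n := u_n - \tu_n$ and $\Delta k_i := k_i - \tk_i$; from the definitions we have $\Delta u_{n+1} = \Delta u_n + h\sum_{i=1}^s b_i \Delta k_i$, so everything reduces to understanding $\Delta k_i$. The central decomposition I will use is $\Delta k_i = A_i + B_i$, where
\begin{equation*}
A_i := f\bigl(t_n+c_i h,\, u_n+h\textstyle\sum_j a_{ij}k_j\bigr) - f\bigl(t_n+c_i h,\, \tu_n+h\textstyle\sum_j a_{ij}\tk_j\bigr)
\end{equation*}
is the deterministic discrepancy between the Runge--Kutta and Monte Carlo arguments of $f$, and $B_i := f(t_n+c_i h,\, \tu_n+h\sum_j a_{ij}\tk_j) - \tk_i$ is the pure Monte Carlo sampling error. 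Letting $\Fs_{i-1}$ denote the $\sigma$-algebra generated by all samples used in stages $1,\dots,i-1$ (which determine $\tu_n,\tk_1,\dots,\tk_{i-1}$), independence of the fresh samples $X_l^{(i)}$ yields $\E[B_i\mid \Fs_{i-1}]=0$, and this martingale structure is what kills the critical cross terms.

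For the bias recurrence \eqref{eq: diff recurrence 1}, taking unconditional expectations eliminates every $B_i$, so $\E[\Delta k_i] = \E[A_i]$. A second-order Taylor expansion of $f$ around $u_n + h\sum_j a_{ij}k_j$, combined with the bounds \eqref{assump: bd}, gives
\begin{equation*}
\|\E[A_i]\|_2 \le M'\,\bigl\|\E[\Delta u_n+h\textstyle\sum_j a_{ij}\Delta k_j]\bigr\|_2 + \tfrac{M''}{2}\,\E\bigl\|\Delta u_n+h\textstyle\sum_j a_{ij}\Delta k_j\bigr\|_2^2.
\end{equation*}
Because the quadratic remainder involves $\E\|\cdot\|^2$ rather than $\|\E[\cdot]\|$, the bias recurrence does not close by itself and must be propagated alongside a second-moment bound on $\Delta k_j$. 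I will therefore induct on the stage index $i=1,\dots,s$, producing simultaneous bounds on $\|\E[\Delta k_i]\|_2$ and $\E\|\Delta k_i\|_2^2$ in terms of $\|\E[\Delta u_n]\|_2$, $\E\|\Delta u_n\|_2^2$ and $\rkn^2/N_s$; each stage roughly doubles the constants because $\Delta k_i$ depends recursively on all previous $\Delta k_j$, which explains the $2^s$ factor in $\alpha$. Plugging the final-stage bounds into $\Delta u_{n+1}$ and using $|b_i|\le R$ then yields \eqref{eq: diff recurrence 1}, with the $h^3\rkn^2/N_s$ contribution arising from the $M''$-quadratic term acting on the sampling variance carried by $\Delta k_j$.

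For the second-moment recurrence \eqref{eq: diff recurrence 2}, expand
\begin{equation*}
\|\Delta u_{n+1}\|_2^2 = \|\Delta u_n\|_2^2 + 2h\textstyle\sum_i b_i\,\mathrm{Re}\langle \Delta u_n,\Delta k_i\rangle + h^2\bigl\|\sum_i b_i \Delta k_i\bigr\|_2^2
\end{equation*}
and take expectations. The $B_i$ contribution to the linear cross term vanishes since $\E\langle\Delta u_n, B_i\rangle = \E\langle \Delta u_n,\E[B_i\mid \Fs_{i-1}]\rangle = 0$, using that $\Delta u_n$ is $\Fs_0$-measurable. The $A_i$ contribution is handled by Cauchy--Schwarz and the Lipschitz bound $M'$, producing an $O(h)\E\|\Delta u_n\|_2^2$ term. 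In the quadratic $h^2$ term I split $\Delta k_i = A_i + B_i$ once more; conditioning on $\Fs_{i-1}$ gives $\E[\|B_i\|_2^2\mid \Fs_{i-1}] = V(\tu_n+h\sum_j a_{ij}\tk_j)/N_s$ with $V(\cdot) := \sum_m \Var_X g^{(m)}(\cdot,X)$, and the Lipschitz property of $g$ in $u$ (from the $M'$ bound, which introduces the dimension factor $d$ appearing in $\beta$) lets me relate $V$ at the stochastic argument to $\rkn^2$ at the Runge--Kutta argument. The $h^5\rkn^4/(s^2R^2 N_s^2)$ remainder in \eqref{eq: diff recurrence 2} arises from the $M''$-type higher-order terms carried through the stage-wise bounds, which is why its coefficient carries a factor of $\alpha^2$.

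The main technical obstacle, I expect, will be the stage-wise induction needed to keep $\rkn^2/N_s$ (rather than some higher power of $1/N_s$) as the dominant Monte Carlo contribution. Specifically, controlling $\Var_X g(\tu_n+h\sum_j a_{ij}\tk_j, X)$ in terms of $\rkn^2$ requires both the Lipschitz constant $M'$ and smallness of $h$, and the resulting correction must be absorbed into the $\E\|\Delta u_n\|_2^2$ term rather than propagating as a new Monte Carlo noise source. A secondary subtlety is the quantitative choice of \emph{``$h$ sufficiently small''} and \emph{``$N_s$ sufficiently large''}: these thresholds must be set so that every higher-order term generated by each of the $s$ stage Taylor expansions can be reabsorbed into the two leading contributions on the right-hand sides of \eqref{eq: diff recurrence 1}--\eqref{eq: diff recurrence 2}, without needing any bootstrap through the outer time-step recurrence.
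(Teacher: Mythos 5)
Your proposal is correct, and its overall skeleton --- a stage-wise Taylor expansion producing simultaneous bounds on $\|\E(k_i-\tk_i)\|_2$ and $\E\|k_i-\tk_i\|_2^2$ for the bias, followed by an expansion of $\E\|u_{n+1}-\tu_{n+1}\|_2^2$ with separate treatment of the quadratic and cross terms --- matches the paper's. The genuine difference lies in how the dangerous cross term $h\E[(u_n-\tu_n)^\dagger\sum_i b_i(k_i-\tk_i)]$ is prevented from contributing an $h\rkn^2/N_s$ loss. The paper (Lemmas \ref{lemma: 2ed error cross term} and \ref{thm: diff_tkvsbk}) introduces the semi-stochastic bridge $\bk_i$ obtained by running the \emph{deterministic} stage recursion from $\tu_n$; since $\bk_i-\tk_i$ is \emph{not} conditionally centered (the later stochastic stages depend nonlinearly on the earlier stages' samples), a separate lemma is needed to show $\|\E_{X_i}(\bk_i-\tk_i)\|_2 \le \alpha' h^2\rkn^2/N_s$, and squaring this conditional bias is precisely what produces the $\alpha^2h^5\rkn^4/(s^2R^2N_s^2)$ remainder in \eqref{eq: diff recurrence 2}. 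Your decomposition $\Delta k_i=A_i+B_i$ instead centers the Monte Carlo error at the \emph{stochastic} argument, so $\E[B_i\mid\Fs_{i-1}]=0$ holds exactly and the $B_i$ cross term vanishes identically with no auxiliary lemma; the price is that $A_i$ contains $\Delta k_j$ (hence the noise $B_j$) for $j<i$ inside its Lipschitz bound, so after Cauchy--Schwarz the $A_i$ cross term yields not only $O(h)\E\|u_n-\tu_n\|_2^2$ but also an $O(h^2)\rkn^2/N_s$ contribution --- harmless, since it only augments the constant in front of the $h^2\rkn^2/N_s$ term already supplied by the quadratic part. Your route thus delivers a bound of the claimed form (in fact with no $h^5/N_s^2$ remainder at all, so your attribution of that term to ``$M''$-type higher-order terms'' is slightly off --- in the paper it is the squared conditional bias of the bridge); the paper's bridge construction is the version it later reuses for the inchworm scheme, where the analogous conditional expectations are again nonzero and must be quantified.
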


Next, we apply the two recurrence relations above and accumulate the two errors step by step. We will reach the following estimates:
\begin{theorem}\label{thm: diff bounds}
Under the settings in Proposition \ref{thm: diff recurrence relations}, we have 

Bias estimation
\begin{equation}\label{diff 1st error upper bound}
\|\E(u_{N}-\tu_{N})\|_2 \le \left[ \frac{h^2}{N_s}\big( e^{\alpha T}-1\big) + \alpha T \Big( \frac{h}{N_s} + \frac{\alpha^2 h^4}{s^2 R^2 N_s^2} \rkn^2 \Big) \big(e^{\max(\alpha,\beta) T} -1\big) \right]  \rkn^2.
 \end{equation}

Numerical error estimation
\begin{equation}\label{diff 2ed error upper bound}
\E(\|u_N-\tu_N\|_2^2) \le   \big( e^{\beta T}-1\big) \Big( \frac{h}{N_s} + \frac{\alpha^2 h^4}{s^2 R^2 N_s^2} \rkn^2 \Big) \rkn^2.
\end{equation}
\end{theorem}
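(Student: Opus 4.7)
The plan is to treat the two recurrences from Proposition~\ref{thm: diff recurrence relations} sequentially: first close the loop for the mean-squared error \eqref{eq: diff recurrence 2}, which is self-contained in $\E(\|u_n-\tu_n\|_2^2)$, and then use the resulting bound as a data-term on the right-hand side of the bias recurrence \eqref{eq: diff recurrence 1}. Since the initial condition gives $u_0 = \tu_0 = u(0)$, both $\E(u_0-\tu_0)$ and $\E(\|u_0-\tu_0\|_2^2)$ vanish, so there is no initial contribution to propagate.

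For the mean-squared error, I would write \eqref{eq: diff recurrence 2} as $a_{n+1} \le (1+\beta h)a_n + \beta C_2$ with $C_2 := \frac{h^2}{N_s}\rkn^2 + \frac{\alpha^2 h^5}{s^2 R^2 N_s^2}\rkn^4$, and apply a discrete Grönwall step: iterating from $a_0=0$ gives $a_N \le \beta C_2\sum_{k=0}^{N-1}(1+\beta h)^k = C_2\bigl((1+\beta h)^N -1\bigr)/h$. Using $(1+\beta h)^N \le e^{\beta N h} = e^{\beta T}$ and factoring $\rkn^2$ out of $C_2/h$ produces exactly \eqref{diff 2ed error upper bound}.

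For the bias, set $b_n := \|\E(u_n-\tu_n)\|_2$, $C_1 := (\frac{h}{N_s} + \frac{\alpha^2 h^4}{s^2 R^2 N_s^2}\rkn^2)\rkn^2$, and note the bound already established gives $\E(\|u_n-\tu_n\|_2^2) \le (e^{\beta t_n}-1)C_1$. Substituting into \eqref{eq: diff recurrence 1} yields $b_{n+1} \le (1+\alpha h) b_n + \alpha h \bigl[(e^{\beta t_n}-1)C_1 + \frac{h^2}{N_s}\rkn^2\bigr]$. Unrolling from $b_0=0$ gives
\begin{equation*}
b_N \le \alpha h\sum_{k=0}^{N-1}(1+\alpha h)^{N-1-k}\Bigl[(e^{\beta t_k}-1)C_1 + \tfrac{h^2}{N_s}\rkn^2\Bigr].
\end{equation*}
The constant piece collapses via the same geometric sum as above into $\frac{h^2}{N_s}\rkn^2(e^{\alpha T}-1)$, which is the first bracket of \eqref{diff 1st error upper bound}.

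The remaining work, and the only subtle point, is to bound the mixed-exponential sum $\alpha h\sum_{k=0}^{N-1}(1+\alpha h)^{N-1-k}(e^{\beta t_k}-1)$ by $\alpha T(e^{\max(\alpha,\beta)T}-1)$. The key observation is that $(1+\alpha h)^{N-1-k}(e^{\beta t_k}-1) \le e^{\alpha(T-t_k)}(e^{\beta t_k}-1) = e^{\alpha T + (\beta-\alpha)t_k} - e^{\alpha(T-t_k)}$, and a case split on the sign of $\beta-\alpha$ shows each summand is dominated by $e^{\max(\alpha,\beta)T}-1$: if $\beta\ge\alpha$, the first term is at most $e^{\beta T}$ and the second at least $1$; if $\beta<\alpha$, factor out $e^{\alpha(T-t_k)}$ and bound $e^{\beta t_k}-1 \le e^{\alpha t_k}-1$. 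Multiplying by $\alpha h N = \alpha T$ and combining with $C_1$ reproduces the second bracket of \eqref{diff 1st error upper bound}. The main obstacle of the proof is precisely this mixed-exponential bookkeeping; once the correct case analysis is in place, the rest is a routine discrete Grönwall argument.
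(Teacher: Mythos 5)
Your proposal is correct and follows essentially the same route as the paper: first close the mean-squared-error recurrence by a discrete Grönwall iteration from zero initial data, then feed that bound into the bias recurrence and control the mixed sum $\sum_k (1+\alpha h)^{N-1-k}(e^{\beta t_k}-1)$ by showing each summand is at most $e^{\max(\alpha,\beta)T}-1$ (the paper's version of this is the one-line inequality $e^{\alpha t_i}(e^{\beta t_{n-i}}-1)\le e^{\max(\alpha,\beta)t_n}-1$, which your case split on the sign of $\beta-\alpha$ reproduces). No gaps.
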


This theorem shows that although the stochastic scheme is biased, the bias
plays a minor role in the numerical simulation since the stochastic noise, estimated by the square root of \eqref{diff 2ed error upper bound}, is significantly larger. It is also worth mentioning that the constant $\beta$ depends only on the Runge-Kutta scheme and the bound of the first-order derivatives, while the constant $\alpha$ depends also on the bounds of the second-order derivatives. However, in the estimate \eqref{diff 2ed error upper bound}, the constant $\alpha$ only appears in a term significantly smaller than $h/N_s$. Thus, it is expected that the second-order derivatives have less effect on the numerical error. This is also the case for the inchworm Monte Carlo method to be analyzed in Section \ref{sec: proof}, and accordingly we will give less details for the estimate involving  second-order derivatives.

We will defer the proofs of these theorems. For now, let us discuss the implication of these theorems and how this is related to the numerical sign problem in the quantum Monte Carlo method.

 
\subsection{Discussion on the relation between the numerical error and the numerical sign problem} \label{nsp}

The above error estimation shows exponential growth of the numerical error with respect to time. Such exponential growth is due to the amplification of the error at previous time steps, as is well known in the numerical analysis for ordinary differential equations, which is often estimated by the discrete Gr\"onwall inequality. 

There exists another kind of exponential growth of error, which is typically encountered in the stochastic simulation of quantum mechanical systems, called the ``numerical sign problem'' \cite{Loh1990}. Such a problem occurs when using the Monte Carlo method to evaluate the integral or sum of a strongly oscillatory high-dimensional function. To understand how the numerical sign problem causes the exponential growth of the numerical error, we consider the case where $f(t,u) = -\ii H(t) u$ and $g(t,u,X) = -\ii A(t,X)u$. As an analog of quantum mechanics, we assume that $H(t)$ is a Hermitian matrix, so that $\|u(t)\|_2^2 = \|u(0)\|_2^2$ for any $t$. The solution of this system of ordinary differential equations can be expressed by the Dyson series:
\begin{equation} \label{dyson}
\begin{split}
u(T) = u(0) + \sum_{M=1}^{+\infty}
  \int_0^T \int_0^{t_M} \cdots \int_0^{t_2}
& (-\ii)^M \big( \E_{X_M} A(t_M,X_M) \big) 
    \big( \E_{X_{M-1}} A(t_{M-1},X_{M-1}) \big) \\
& \cdots \big( \E_{X_1} A(t_1,X_1) \big) u(0)
  \,\dd t_1 \cdots \,\dd t_{M-1} \,\dd t_M.
\end{split}
\end{equation}
Thus $u(T)$ can be evaluated directly using the Monte Carlo method to approximate the integral, where $M, t_1, \cdots, t_M$ and $X_1, \cdots, X_M$ are all treated as random variables. While different methods to draw samples exist \cite{Cai2018}, here we only consider the simplest approach, in which $M$ follows the Poisson distribution with the parameter $\lambda = M'T$ where $M'$ is the bound of the first-order derivatives defined in \eqref{assump: bd}, the time points $(t_1, t_2, \cdots, t_M)$ are uniformly distributed in the $M$-dimensional simplex. Let $u^{(\mathrm{num})}(T)$ be the numerical solution obtained using this method. Then the standard error estimation of the Monte Carlo method yields
\begin{equation} \label{eq:variance}
\begin{split}
& \E \|u^{(\mathrm{num})}(T) - u(T)\|^2 \\
\le{} &
  \sum_{M=0}^{+\infty} \frac{\ee^{M'T}}{(M')^M} \int_0^T \int_0^{t_M} 
  \cdots \int_0^{t_2}
    \E \| A(t_M,X_M) A(t_{M-1},X_{M-1}) 
    \cdots A(t_1,X_1) u(0) \|_2^2
  \,\dd t_1 \cdots  \,\dd t_M - \|u(T)\|_2^2 \\
\le{} & [\exp (  (d  +1) M' T ) - 1] \|u(0)\|_2^2,
\end{split}
\end{equation}
where $d$ is the number of dimensions of $u$. It can be observed that the numerical error again grows exponentially in time. However, such exponential growth of the error is not due to the error amplification in the Gr\"onwall inequality. It comes from the growing integral domain on the right-hand side of \eqref{dyson}. Note that although the integral domain expands as $T$ increases, the magnitude of the infinite sum does not grow over time ($\|u(T)\|_2 = \|u(0)\|_2$). This indicates that when $T$ is large, strong oscillation exists in the integrand of \eqref{dyson}, resulting in significant cancellation when taking the sum, which leads to huge variance for Monte Carlo estimation. This intrinsic difficulty of stochastic methods is known as the numerical sign problem. 

As we will see later, if not carefully dealt with, the numerical sign problem can cause even faster growth of the numerical error. One possible approach to mitigating the numerical sign problem is to use the method of \emph{partial resummation}, which only takes part of the summation (instead of the whole integral), and use the result to find other parts of the sum. For example, suppose we want to compute the infinite sum:
\begin{displaymath}
s = 1+a+a^2+a^3+\cdots,
\end{displaymath}
we can choose to take the sum directly using the Monte Carlo method. Alternatively, we can also first take the partial sum $s_1 = 1+a$, and then use the result of $s_1$ to compute another partial sum $s_3 = (1 + a^2) s_1$. Afterwards, $s_3$ can be used to compute $s_7 = (1 + a^4) s_3$, and so forth. It can be seen that the error in the computation of $s_1$ will be amplified when computing $s_3$, and the error of $s_3$ will be amplified in the computation of $s_7$. This illustrates the idea of partial resummation, which partly transfers the sign problem to error amplification. Below we would like to demonstrate that sometimes the Runge-Kutta method can also be considered as the partial resummation of \eqref{dyson}, which changes the underlying mechanism of the error growth.

Consider applying the forward Euler method to the equation
\begin{equation} \label{eq: toy model}
\frac{\dd u}{\dd t} = -\ii H(t)u, \qquad H(t) = \E_X A(t,X),
\end{equation}
so that the numerical scheme is 
\begin{displaymath}
\tu_{n+1} = \tu_n -\ii \frac{h}{N_s} \sum_{l=1}^{N_s} A(t_n, X_l^{(n)}) \tu_n.
\end{displaymath}
When $n = 0$, the scheme gives
\begin{equation} \label{eq: tu1}
\tu_1 = u(0) -\ii \underline{\frac{h}{N_s} \sum_{l=1}^{N_s} A(0, X_l^{(0)}) u(0)}.
\end{equation}
If we view the underlined term as a special Monte Carlo method to evaluate the integral
\begin{displaymath}
\int_0^h \E A(t_1, X) u(0) \,\dd t_1,
\end{displaymath}
for which we only take one sample of $t_1$ locating at $t_1=0$,
then $\tu_1$ turns out to be part of the right-hand side of \eqref{dyson} by reducing the integral domain from $T$ to $h$ and only considering $M=1$. Next, this partial sum $\tu_1$ is used to compute $\tu_2$:
\begin{displaymath}
\begin{split}
\tu_2 &= \tu_1 -\mathrm{i} \frac{h}{N_s} \sum_{l=1}^{N_s} A(t_1, X_l^{(1)}) \tu_1 \\
&= u(0) -\mathrm{i} \frac{2h}{N_s} \sum_{l=1}^{N_s} \frac{A(0, X_l^{(0)}) + A(h,X_l^{(1)})}{2} u(0) + (-\ii)^2 \frac{h^2}{N_s^2} \left( \sum_{l=1}^{N_s} A(h, X_l^{(1)}) \right) \left( \sum_{l=1}^{N_s} A(0, X_l^{(0)}) \right) u(0) \\
& \approx u(0) + \int_0^{2h} (-\ii) \E A(t_1, X) u(0) \,\dd t_1
+ \frac{1}{2} \int_0^{2h} \int_0^{t_2} (-\ii)^2 (\E A(t_2, X))(\E A(t_1, X)) u(0) \,\dd t_1 \,\dd t_2,
\end{split}
\end{displaymath}
which can again be considered as the partial sum of \eqref{dyson}. For further time steps, this can also be verified. As is well known, the error of the forward Euler method may accumulate as the solution evolves, and therefore this example again shows the change of the mechanism for the growth of the error.

However, our error estimate in Theorem \ref{thm: diff bounds} seems to
suggest that shifting the numerical sign problem to error amplification does not flatten the error curve, which still
grows exponentially. The reason is that our error estimation does not make
any assumption on the stability of the Runge-Kutta method, as leads to the exponential growth of the error regardless of the scheme and the problem. Again, let us take \eqref{eq: toy model} as an example and apply the second-order Heun's method. Then the deterministic scheme and the stochastic scheme are, respectively,
\begin{equation}\label{eq:two schemes}
u_{n+1} = (I - h \mathcal{L}) u_n \quad \text{and} \quad
\tilde{u}_{n+1} = (I - h \mathcal{A}) \tilde{u}_n,
\end{equation}
where
\begin{align*}
\mathcal{L} &= \frac{1}{2} \left[
  \ii \Big( H(t_n) + H(t_{n+1}) \Big) + h H(t_{n+1}) H(t_n)
\right], \\
\mathcal{A} &= \frac{1}{2} \left[
  \frac{\ii}{N_s} \sum_{l=1}^{N_s} \Big( A(t_n, X_l^{(1)}) + A(t_{n+1}, X_l^{(2)}) \Big)
  + h \left( \frac{1}{N_s} \sum_{l=1}^{N_s} A(t_{n+1}, X_l^{(2)})\right)
      \left( \frac{1}{N_s} \sum_{l=1}^{N_s} A(t_n, X_l^{(1)})\right)
\right].
\end{align*}
By straightforward calculation, we can find that
\begin{displaymath}
\E \|u_{n+1} - \tilde{u}_{n+1}\|_2^2 \le
  \|I-h\mathcal{L}\|_2^2 \, \E\|u_n - \tilde{u}_n\|_2^2
  + h^2 \E\|(\mathcal{L} - \mathcal{A}) \tilde{u}_n\|_2^2.
\end{displaymath}
On the right-hand side, the second term can be bounded by the standard Monte
Carlo error estimate. For the first term, if we assume that $H(t_{n+1})$ and
$H(t_n)$ are both Hermitian matrices, and $H(t_{n+1}) - H(t_n) = O(h)$, then
$\|I-h\mathcal{L}\|_2^2 = 1 + O(h^4)$. Therefore when $h$ is small, the
exponential growth of the error can be well suppressed, since
\begin{displaymath}
\lim_{h\rightarrow 0^+} (1 + Ch^4)^{T/h} = 1
\end{displaymath}
for any positive constants $C$ and $T$. However, for large time
steps such that $\|I - h\mathcal{L}\|_2$ is significantly larger than $1$, the error still grows exponentially. In general, for a stable Runge-Kutta scheme, the constant $\beta$ in the coefficient $(1+\beta h)$ appearing in \eqref{eq: diff recurrence 2} can be negative or a positive $o(1)$ quantity. In this case, partial resummation indeed helps reduce the error growth. One example of applications is the method of qDRIFT proposed in \cite{Campbell2019}, where the total Hamiltonian is also computed using a stochastic method. A symplectic time integrator is utilized therein so that the error growth is also well suppressed. Note that the paper \cite{Campbell2019} provides only an estimate of the bias for qDRIFT, while has not considered the full numerical error. As we have discussed, the bias is in fact not the major part of the error. 

The analysis of such a simple ODE sketches the idea how the numerical sign
problem can be mitigated in the algorithms with partial resummation. For open quantum systems, the inchworm Monte Carlo method, which has been claimed to have the capability of taming the numerical sign problem \cite{Cohen2015}, is one option to apply partial resummation to the corresponding Dyson series. However, due to the existence of the heat bath, the evolution of the quantum state is non-Markovian, and the equation that the inchworm Monte Carlo method solves can only be formulated as an integro-differential equation, so that one cannot simply apply a symplectic scheme to suppress the error growth. For this reason, the situation of the inchworm Monte Carlo method is much more complicated due to the nontrivial behavior of the error amplification. A detailed discussion can be found in the following.


\section{Introduction to inchworm Monte Carlo method}
\label{sec: inchworm intro}

We study an open quantum system described by the von Neumann equation for the density matrix $\rho(t)$   
\begin{equation} \label{eq:vonNeumann}
\ii \frac{\dd \rho}{\dd t} = [H, \rho]:= H\rho - \rho H,
\end{equation}
where the Schr\"odinger picture Hamiltonian $H$ is a Hermitian operator on the Hilbert space $\mc{H}_s \otimes \mc{H}_b$, with $\mc{H}_s$ and $\mc{H}_b$ representing respectively the Hilbert spaces associated with the system and the bath of the open quantum system. The operator $H$ consists of the Hamiltonians of the system and the bath, as well as the coupling terms describing the interaction of the system and the bath. Assuming that the coupling term has the tensor-product form, we have
\begin{displaymath}
 H =  H_s \otimes \mathrm{Id}_b + \mathrm{Id}_s \otimes H_b +  W_s \otimes W_b,
\end{displaymath}
where $H_s$ and $W_s$ are Hermitian operators on $\mc{H}_s$, $H_b$ and $W_b$ are Hermitian operators on $\mc{H}_b$, and $\mathrm{Id}_s,\mathrm{Id}_b$ are the identity operators for the system and the bath, respectively. In our paper, we take the common assumption that the bath is modeled by a larger number of harmonic oscillators, and we only consider the simplest system modeled by a single spin. Such a problem is often used as benchmarks, as it exhibits most difficulties in the treatment of the system-bath coupling, and is known as the spin-boson model to be introduced below.

\subsection{Spin-boson model}
As one fundamental example of open quantum systems \cite{Wang2000, Kernan2002, Duan2017}, the spin-boson model assumes that 
\begin{displaymath}
\mc{H}_s = \text{span}\{ \ket{1}, \ket{2} \}, \qquad
\mc{H}_b = \bigotimes_{l=1}^L \left( L^2(\mathbb{R}^3) \right),
\end{displaymath}
where $L$ is the number of harmonic oscillators in the bath. The corresponding Hamiltonians are
\begin{displaymath}
H_s = \epsilon \hat{\sigma}_z + \Delta \hat{\sigma}_x, \qquad
H_b = \sum_{l=1}^L \frac{1}{2} (\hat{p}_l^2 + \omega_l^2 \hat{q}_l^2)
\end{displaymath}
Here $\hat{\sigma}_x$, $\hat{\sigma}_z$ in $H_s$ are Pauli matrices satisfying
  $\hat{\sigma}_x \ket{1} = \ket{2}$, $\hat{\sigma}_x \ket{2} = \ket{1}$,
  $\hat{\sigma}_z \ket{1} = \ket{1}$, $\hat{\sigma}_z \ket{2} = -\ket{2}$, and the parameters $\epsilon$, $\Delta$ are respectively the energy difference between two spin states and frequency of the spin flipping. In the bath Hamiltonian $H_b$, the notations $\hat{p}_l$, $\hat{q}_l$ and $\omega_l$ are respectively the momentum operator, the position operator and the frequency of the $l$th harmonic oscillator. The coupling operators are given by  
\begin{displaymath}
 W_s = \hat{\sigma}_z ,\qquad W_b = \sum_{l=1}^L c_l \hat{q}_l,
\end{displaymath}
where $c_l$ is the coupling intensity between the $l$th harmonic oscillator and
the spin.

The density matrix solving \eqref{eq:vonNeumann} can be formally written as $\rho(t) =  \ee^{-\ii t H} \rho(0) \ee^{\ii t H}$, and we assume its initial value has the separable
form $\rho(0) = \rho_s \otimes \rho_b$ where the bath $\rho_b$ commutes with the Hamiltonian $H_b$. We are interested in the evolution of the expectation for a given Hermitian observable $O = O_s \otimes \mathrm{Id}_b$ acting only on the system part, defined by
\begin{equation} \label{eq:O(t)}
\langle O(t) \rangle := \tr(O \rho(t))
  = \tr(O \ee^{-\ii t H} \rho(0) \ee^{\ii t H})=  \tr(\rho_s \otimes \rho_b \ee^{\ii t H} O \ee^{-\ii t H} ) = \tr_s (\rho_s \Ge(2t,0))
\end{equation}
where we need to evaluate the \emph{full propagator} $\Ge(2t,0):=\tr_b(\rho_b \ee^{\ii t H} O \ee^{-\ii t H}) \in \C^{2\times 2}$. The subscript ``e" stands for ``exact" which is used to distinguish $\Ge$ from its numerical solutions later. The time parameters $0$ and $2t$ are defined on the ``Keldysh contour'' \cite{Keldysh1965}, where we consider $\ee^{\ii t H} O \ee^{-\ii t H}$ as the operator evolving the quantum state for time $t$ using the Hamiltonian $H$, applying the observable operator $O$, and then evolving the quantum state for another period of time $t$ using the Hamiltonian $-H$. Thus the total time of evolution is $2t$, and the parameters $0$ and $2t$ in $\Ge(2t,0)$ denote the initial and final times of such evolution, respectively. This notation can also be generalized to the evolution restricted on a section of the Keldysh contour from $\si$ to $\sf$ as $\Ge: \mathcal{T} \rightarrow \C^{2\times 2}$, where $\mathcal{T} = \{(\sf, \si) \mid 2t \geq \sf \geq \si \geq 0\}$. The precise definition of $\Ge(\sf,\si)$ is
\begin{displaymath}
\Ge(\sf,\si) = \begin{cases}
\tr_b(\rho_b \ee^{\ii \sf H_b} \ee^{-\ii(\sf-\si)H} \ee^{-\ii \si H_b}), & \text{if } \si \leq \sf < t, \\
\tr_b(\rho_b \ee^{\ii (2t-\sf) H_b} \ee^{-\ii(\si-\sf)H} \ee^{-\ii (2t-\si) H_b}), & \text{if } t \leq \si \leq \sf, \\
\tr_b(\rho_b \ee^{\ii (2t-\sf) H_b} \ee^{\ii(\sf-t)H}  O \ee^{-\ii(t-\si)H} \ee^{-\ii \si H_b}), & \text{if } \si < t \leq \sf.
\end{cases}
\end{displaymath}
It can be seen that in the case $\si < t \leq \sf$, if we replace $H_b$ with $H$ in the definition, the result becomes identical to the definition of $G_e(2t,0)$. Since $H_b$ is the bath part of $H$, the full propagator $\Ge(\sf,\si)$ can be considered as a partial summation of $\Ge(2t,0)$ (see \cite{Chen2017, Cai2020} for details). The function $\Ge(\sf, \si)$ is discontinuous at $\sf = t$ and $\si = t$ when the operator $O$ is applied. While we only consider the spin-boson model in this work, our analysis can be extended to higher-dimensional Hilbert space $\mc{H}_s$ without difficulties.

Due to the high dimensionality of the space $\mc{H}_b$, it is impractical to solve $\ee^{\pm \ii t H}$ directly. One feasible approach is to apply the method of quantum Monte Carlo to approximate $\langle O(t) \rangle$ numerically based on the Dyson series of propagator $\Ge(2t,0)$ which will be introduced subsequently.

\subsection{Dyson series}
 It is well known that $\Ge(\sf,\si)$ can be expanded into the following \emph{Dyson series} (for derivation, see \cite{Cai2020}):
\begin{equation}\label{eq:Ge_Dyson}
    \Ge(\sf, \si)  =  G_s^{(0)}(\sf,\si) + \sum_{M=1 }^{+\infty}
  \ii^M \int_{\sf > \vec{\sb} > \si} 
(-1)^{\#\{\vec{\sb} < t\}}  \mathcal{U}^{(0)}(\sf, \vec{\sb} , \si) \cdot
    \mathcal{L}^{(0)}(\vec{\sb}) \ \dd \vec{\sb}, \quad \text{~for~} (\sf,\si) \in \mathcal{T}.
 \end{equation}
Here $\vec{\sb} = (s_M, s_{M-1}, \cdots, s_1)$ is an $M$-dimensional vector denoting a decreasing time sequence, and $\#\{\vec{\sb} < t\}$ denotes the number of components in $\vec{\sb}$ that are less than $t$. The integral with respect to $\vec{\sb}$ is interpreted as
\begin{displaymath}
\int_{\sf > \vec{\sb} > \si} \psi(\vec{\sb}) \,\dd \vec{\sb}  : = \int^{\sf}_{\si}  \int^{s_M}_{\si}  \cdots \int^{s_{2}}_{\si}    \psi(\vec{\sb}) \, \dd s_1 \cdots  \dd s_{M-1} \, \dd s_M  .
\end{displaymath}
In the integrand of \eqref{eq:Ge_Dyson}, the functional $\mathcal{U}^{(0)}$ is associated with the system part, defined by
\begin{equation}\label{def U0}
 \mathcal{U}^{(0)}(\sf,\vec{\sb},\si)  = G_s^{(0)}(\sf, s_M) W_s G_s^{(0)}(s_{M}, s_{M-1}) W_s
  \cdots W_s G_s^{(0)}(s_2, s_1) W_s G_s^{(0)}(s_1, \si),
\end{equation}
where
\begin{equation}\label{Gs0}
  G_s^{(0)}(s_{k+1}, s_k) =
  \begin{cases}
    \ee^{-\ii (s_{k+1} - s_k) H_s},
    & \text{if } s_k \le s_{k+1} < t, \\
    \ee^{-\ii (s_k - s_{k+1}) H_s},
    & \text{if } t \le s_k \le s_{k+1}, \\
    \ee^{-\ii (t - s_{k+1}) H_s} O_s \ee^{-\ii (t - s_k) H_s},
    & \text{if } s_k < t \le s_{k+1}.
  \end{cases}
\end{equation}
Such a form is related to the operator $W_s$ in the interaction picture. The function $\mc{L}^{(0)}$ comes from the contribution of the bath, which is computed using the Wick's theorem \cite{Negele1988}. It has the form
\begin{equation} \label{eq:L all pair}
   \Ls^{(0)}(s_M,\cdots,s_1) =  \left\{   \begin{array}{l l}
   0, & \text{if $M$ is odd}; \\ 
   \displaystyle \sum_{\mf{q} \in \mQ(\vec{\sb})} \prod_{(s_k,s_j) \in \mf{q}} B(s_k,s_j), &  \text{if $M$ is even},
    \end{array} \right.
\end{equation}
where $B: \{(\tau_2,\tau_1) \mid  \tau_1 \leq \tau_2 \} \rightarrow \mathbb{C}$ is the \emph{two-point bath correlation} whose general definition is given in \cite{Chen2017}. Later in the numerical experiments, we will specify the formula of $B(\cdot,\cdot)$ for the spin-boson model. The set $\mQ(\vec{\sb})$ is given by:
\begin{equation} \label{eq:all linking pairs}
  \begin{split}
  \mQ(s_M,\cdots,s_1) =
  \Big\{ \{(s_{k_1}, s_{j_1}), \cdots, (s_{k_{M/2}}, s_{j_{M/2}})\} \,\Big\vert\,  \{j_1, \cdots, j_{M/2}, k_1, \cdots, k_{M/2}\} = \{1,\cdots,M\}, \\
  j_l < k_l \text{ for any } l = 1,\cdots,M/2
  \Big\},
  \end{split}
\end{equation} 
which includes all possible pairings of the set $\{s_1, s_2, \cdots, s_M\}$. For example, when $M = 4$, we have
\begin{displaymath}
  \mQ(s_4, s_3, s_2, s_1) = \Big\{ \{(s_4, s_3), (s_2, s_1)\}, \quad \{(s_4, s_2), (s_3, s_1)\}, \quad \{(s_4, s_1), (s_3, s_2)\},\Big\},
\end{displaymath}
and thus
\begin{equation} \label{eq:all pairs example}
   \Ls^{(0)}(s_4,s_3,s_2,s_1) = B(s_2,s_1) B(s_4,s_3) + B(s_3,s_1) B(s_4,s_2) + B(s_4,s_1) B(s_3,s_2).
\end{equation}
For convenience, this is often expressed by the following diagrammatic equation:
\begin{equation} \label{eq:all linking pair diagram example}
\Ls^{(0)}(s_4,s_3,s_2,s_1)
=
\begin{tikzpicture}
\draw[-] (0,0)--(1.5,0);\draw plot[only marks,mark =*, mark options={color=black, scale=0.5}]coordinates {(0,0) (0.5,0) (1,0)(1.5,0)};
\draw[-] (0,0) to[bend left=75] (0.5,0);
\draw[-] (1,0) to[bend left=75] (1.5,0);
 \end{tikzpicture}
 +
 \begin{tikzpicture}
\draw[-] (0,0)--(1.5,0);\draw plot[only marks,mark =*, mark options={color=black, scale=0.5}]coordinates {(0,0) (0.5,0) (1,0)(1.5,0)};
\draw[-] (0,0) to[bend left=75] (1,0);
\draw[-] (0.5,0) to[bend left=75] (1.5,0);
 \end{tikzpicture}
 +
  \begin{tikzpicture}
\draw[-] (0,0)--(1.5,0);\draw plot[only marks,mark =*, mark options={color=black, scale=0.5}]coordinates {(0,0) (0.5,0) (1,0)(1.5,0)};
\draw[-] (0,0) to[bend left=60] (1.5,0);
\draw[-] (0.5,0) to[bend left=75] (1,0);
 \end{tikzpicture}~,
\end{equation}
where each diagram refers to a product $B(\cdot,\cdot)B(\cdot,\cdot)$ and each arc connecting a pair of bullets denotes the corresponding two-point correlation. For general $M$, the value of the corresponding bath influence functional is the sum of all possible pairings, and the number of these diagrams is $(M-1)!!$.

To evaluate $\Ge(\sf,\si)$, one may truncate the Dyson series at a sufficiently large integer $\bar{M}$ and evaluate those high-dimensional integrals on the right-hand side using Monte Carlo integration, resulting in the bare dQMC. More specifically, we replace each integral in \eqref{eq:Ge_Dyson} by the average of $N_s$ samples of time sequences, and the Dyson series is approximated by 
\begin{equation}\label{bare dqmc}
  \Ge(\sf, \si)  \approx  G_s^{(0)}(\sf,\si) + \sum_{\substack{M=2 \\ M \text{~is even}  } }^{\bar{M}} \frac{1}{N_s} \  \sum^{N_s}_{i = 1} \   \frac{(\sf - \si)^M}{M!} \cdot  \ii^{M} 
(-1)^{\#\{\sb^{(i)}_M < t\}}  \mathcal{U}^{(0)}(\sf, \vec{\sb}^{(i)}_M , \si) 
    \mathcal{L}^{(0)}(\vec{\sb}^{(i)}_M)
\end{equation}
where each time sequence $\vec{\sb}^{(i)}_M := (s^{(i)}_1,\cdots,s^{(i)}_{M})$ is drawn independently from the uniform distribution $U([\si,\sf]^M)$ and then sorted such that $s^{(i)}_1<s^{(i)}_2< \cdots < s^{(i)}_{M}$. Note that the bath influence functional vanishes when $M$ is odd, which is why the right-hand side of \eqref{bare dqmc} only sums over terms with even $M$. For the same reason, the truncation $\bar{M}$ should also be chosen as an even integer.

The numerical solution obtained via bare dQMC will also encounter the dynamical sign problem: for any given even integer $M$, the function $\Ls^{(0)}$ is the sum of $(M-1)!!$ terms, and each is bounded by $\bdL^{M/2}$ upon assuming $|B(\cdot,\cdot)|$ has the uniform upper bound $\bdL$. Similar to our analysis for \eqref{eq:variance}, 
the numerical sign problem of \eqref{eq:Ge_Dyson} can  be quantified by the following bound of variance \cite[Section 5]{Cai2020}:
\begin{equation}\label{dyson sign problem}
\sum_{\substack{M=2\\M\text{ is even}}}^{+\infty} \frac{(\sf-\si)^M}{M!} (M-1)!! \left(\|W_s\|^M \bdL^{M/2}\right)^2 = \exp \left( \frac{\|W_s\|^4 \bdL^2 (\sf-\si)^2}{2} \right) - 1.
\end{equation}
One can see from the bound above that the variance increases exponentially with the square of the length of the time interval, making long time simulations extremely difficult.
To mitigate this fast error growth, the inchworm Monte Carlo method was introduced in \cite{Chen2017, Cai2020}, which will be introduced in the subsequent section.

\subsection{Inchworm Monte Carlo method}
 \subsubsection{Integro-differential equation}
 \label{sec: int diff eq intro}
In \cite[Section 4]{Cai2020}, the full propagator has been proved to satisfy the following integro-differential equation:
\begin{equation}\label{eq: inchworm equation 1}
    \frac{\partial \Ge(\sf,\si)}{\partial \sf} =   \sgn(\sf-t)  \left[ \ii H_s \Ge(\sf,\si) +  \sum^{+ \infty}_{\substack{M=1\\ M \text{~is odd~}}} \ii^{M+1} \int_{\sf > \vec{\sb} > \si} \dd \vec{\sb}   (-1)^{\#\{\vec{\sb} < t\}}  W_s \mc{U}(\sf,\vec{\sb},\si) \Ls(\sf,\vec{\sb}) \right] .
\end{equation} 
The equation holds for the initial time point $\si\in[0,2t]\backslash \{t\}$ and the final time point $\sa \in [\si,2t]\backslash \{t\}$, and satisfies the following ``jump conditions'' due to the discontinuity of $\Ge(\sf,\si)$:
\begin{equation}  \label{eq: jump condition}
    \begin{split}
&\lim_{\sa \rightarrow t^+} \Ge(\sa,\si) = O_s \lim_{\sa \rightarrow t^-}\Ge(\sa,\si),  \\
&\lim_{\si \rightarrow t^-} \Ge(\sa,\si) =  \lim_{\si \rightarrow t^+}\Ge(\sa,\si)O_s.
  \end{split}
\end{equation}
In addition, \eqref{eq: inchworm equation 1} also satisfies the boundary condition $\Ge(s',s') = \text{Id}$ for all $0 \le s' \le 2t$. 

In the integrand of \eqref{eq: inchworm equation 1}, $\mc{U}$ is defined similarly to $\mc{U}^{(0)}$ in the Dyson series with the bare propagator $G^{(0)}_s(\cdot, \cdot)$ replaced by the full propagator $\Ge(\cdot,\cdot)$:
\begin{equation}\label{U}
\mc{U}(\sf, \vec{\sb}, \si) = \Ge(\sf, s_M) W_s
  \Ge(s_{M}, s_{M-1}) W_s \cdots W_s \Ge(s_2, s_1) W_s \Ge(s_1, \si).
\end{equation}
The definition of the bath part $\Ls$ is also similar to $\Ls^{(0)}$:
\begin{equation} \label{eq:L inchworm}
    \Ls(\sf, s_M, \cdots,s_{1})   =  \sum_{\mf{q} \in \mQ^c(\sf,\vec{\sb})} \prod_{(s_k,s_j) \in \mf{q}} B(s_k,s_j),
\end{equation}
where $\mQ^c$ is a subset of $\mQ$ appearing in $\Ls^{(0)}$ which only includes ``linked" pairings, which means in its diagrammatic representation all points are connected with each other using arcs as ``bridges". For example when $M=3$, $\Ls(\sf,s_3,s_2,s_1)$ only contains one linked diagram in \eqref{eq:all linking pair diagram example}:
\begin{equation} \label{Lbc m3}
\Ls(\sf,s_3,s_2,s_1)
= \ 
 \begin{tikzpicture}
\draw[-] (0,0)--(1.5,0);\draw plot[only marks,mark =*, mark options={color=black, scale=0.5}]coordinates {(0,0) (0.5,0) (1,0)(1.5,0)};
\draw[-] (0,0) to[bend left=75] (1,0);
\draw[-] (0.5,0) to[bend left=75] (1.5,0);
 \end{tikzpicture} = B(s_3,s_1) B(\sf,s_2).
\end{equation}
Another example for $M=5$ is given by 
\begin{equation}\label{linked pairs example}
  \begin{split}
    \Ls(\sf,s_5,s_4,s_3,s_2,s_1)
  ={} & 
\begin{tikzpicture}
\draw[-] (0,0)--(2.5,0);\draw plot[only marks,mark =*, mark options={color=black, scale=0.5}]coordinates {(0,0) (0.5,0) (1,0)(1.5,0)(2,0)(2.5,0)};
\draw[-] (0,0) to[bend left=60] (1,0);
\draw[-] (0.5,0) to[bend left=60] (2,0);
\draw[-] (1.5,0) to[bend left=60] (2.5,0);
 \end{tikzpicture} 
 +
\begin{tikzpicture}
\draw[-] (0,0)--(2.5,0);\draw plot[only marks,mark =*, mark options={color=black, scale=0.5}]coordinates {(0,0) (0.5,0) (1,0)(1.5,0)(2,0)(2.5,0)};
\draw[-] (0,0) to[bend left=60] (1.5,0);
\draw[-] (0.5,0) to[bend left=60] (2,0);
\draw[-] (1,0) to[bend left=60] (2.5,0);
 \end{tikzpicture}  
 +
 \begin{tikzpicture}
\draw[-] (0,0)--(2.5,0);\draw plot[only marks,mark =*, mark options={color=black, scale=0.5}]coordinates {(0,0) (0.5,0) (1,0)(1.5,0)(2,0)(2.5,0)};
\draw[-] (0,0) to[bend left=60] (1.5,0);
\draw[-] (0.5,0) to[bend left=60] (2.5,0);
\draw[-] (1,0) to[bend left=60] (2,0);
 \end{tikzpicture} 
 +
  \begin{tikzpicture}
\draw[-] (0,0)--(2.5,0);\draw plot[only marks,mark =*, mark options={color=black, scale=0.5}]coordinates {(0,0) (0.5,0) (1,0)(1.5,0)(2,0)(2.5,0)};
\draw[-] (0,0) to[bend left=60] (2,0);
\draw[-] (0.5,0) to[bend left=60] (1.5,0);
\draw[-] (1,0) to[bend left=60] (2.5,0);
 \end{tikzpicture} \\
 ={} & B(s_3,s_1)B(s_5,s_2)B(\sf,s_4)+ B(s_4,s_1)B(s_5,s_2)B(\sf,s_3)\\
 & \hspace{60pt}  +B(s_4,s_1)B(\sf,s_2)B(s_5,s_3)+B(s_5,s_1)B(s_4,s_2)B(\sf,s_3) 
    \end{split}
\end{equation}
which does not include the unlinked terms in the bath influence functional $\Ls^{(0)}(\sf,s_5,s_4,s_3,s_2,s_1)$ such as 
\begin{equation} \label{unlinked diagrams} 
  \begin{split}
&\begin{tikzpicture}
\draw[-] (0,0)--(2.5,0);\draw plot[only marks,mark =*, mark options={color=black, scale=0.5}]coordinates {(0,0) (0.5,0) (1,0)(1.5,0)(2,0)(2.5,0)};
\draw[-,red] (0,0) to[bend left=75] (0.5,0);
\draw[-] (1,0) to[bend left=75] (2,0);
\draw[-] (1.5,0) to[bend left=75] (2.5,0);
 \end{tikzpicture}:= \redtext{B(s_2,s_1)}B(s_5,s_3)B(\sf,s_4), \quad
 \begin{tikzpicture}
\draw[-] (0,0)--(2.5,0);\draw plot[only marks,mark =*, mark options={color=black, scale=0.5}]coordinates {(0,0) (0.5,0) (1,0)(1.5,0)(2,0)(2.5,0)};
\draw[-] (0,0) to[bend left=75] (1,0);
\draw[-] (0.5,0) to[bend left=60] (2.5,0);
\draw[-,red] (1.5,0) to[bend left=75] (2,0);
 \end{tikzpicture} :=B(s_3,s_1)B(\sf,s_2)\redtext{B(s_5,s_4)} , \quad \cdots
  \end{split}
\end{equation}
where the pairs marked in red do not link to the rest part of the diagrams via the arc bridges. Despite the smaller number of the diagrams included in $\Ls(s_m,\cdots,s_1)$ compared to $\Ls^{(0)}(s_m,\cdots,s_1)$ in Dyson series, this number also grows asymptotically as the double factorial $\ee^{-1}(m-1)!!$ \cite{Stein1978b}. Therefore, the modulus of $\Ls(\sf,s_M,\cdots,s_1)$ for given odd $M$ is similarly assumed to be bounded by $M!! \bdL^{\frac{M+1}{2}}$. 

The relation between the equation \eqref{eq: inchworm equation 1} and the original Dyson series \eqref{eq:Ge_Dyson} is similar to the relation between \eqref{eq: diff eq} and \eqref{dyson}. However, due to the non-Markovian nature of the propagators, partial resummation cannot reduce the series \eqref{eq:Ge_Dyson} to a differential equation, and therefore the equation holds an integro-differential form. Nevertheless, by partial resummation, the series in integro-differential equation \eqref{eq: inchworm equation 1} converges much faster than Dyson series \eqref{eq:Ge_Dyson}, allowing us to truncate it to get a reasonable approximation. In this paper, we assume that the series in \eqref{eq: inchworm equation 1} is truncated up to a finite $\bar{M}$, which can provide sufficiently good approximation to the original equation. For convenience, we write the integro-differential equation with truncated series as
\begin{equation}\label{eq: inchworm equation}
 \frac{\partial \Ge(\sa,\si)}{\partial \sa} =   \sgn(\sa - t) \ii H_s \Ge(\sa,\si) + \Hs(\sa,\Ge,\si),
 \end{equation}
where
\begin{equation}\label{eq: calH}
    \Hs(\sa,\Ge,\si) := \sgn(\sa - t)\sum^{\bar{M}}_{\substack{M=1\\ M \text{~is odd~}}} \ii^{M+1} \int_{\sa > \vec{\sb} > \si } (-1)^{\#\{\vec{\sb} < t\}}W_s \Us(\sa,\vec{\sb},\si) \Ls(\sa,\vec{\sb}) \,\dd\vec{\sb}.
\end{equation}

\begin{remark}
     The original inchworm algorithm proposed in \cite[Section 3]{Chen2017} was not introduced using the integro-differential equation. Instead, it writes the full propagator $G(\sf,\si)$ in a form similar to the Dyson series:
\begin{equation} \label{eq:inchworm}
\begin{split}
G(\sf, \si) &= \mc{G}_{\Sarr}(\sf,\si) +
\sum_{\substack{M=2\\[2pt] M \text{ is even}}}^{+\infty}
  \int_{\sf > \vec{\sb} > \si} (-1)^{\#\{\vec{\sb} < t\}} \ii^M \times\\
  & \times     \mc{G}_{\Sarr}(\sf, s_M) W_s \mc{G}_{\Sarr}(s_M, s_{M-1}) W_s
  \cdots W_s \mc{G}_{\Sarr}(s_2, s_1) W_s \mc{G}_{\Sarr}(s_1, \si) \mathcal{L}_{\mathrm{ip}}(\vec{\sb}) 
  \,\dd \vec{\sb},
\end{split}
\end{equation}
where $\Sarr \in (\si, \sf)$ and
\begin{displaymath}
\mc{G}_{\Sarr}(\sf, \si) = \left\{ \begin{array}{ll}
  G(\sf,\si), & \text{if } \si \le \sf \le \Sarr, \\
  G_s^{(0)}(\sf,\si), & \text{if } \Sarr < \si \le \sf, \\
  G_s^{(0)}(\sf,\Sarr) \, G(\Sarr,\si),
    & \text{if } \si \le \Sarr < \sf.
\end{array} \right.
\end{displaymath}
The bath part $\mc{L}_{\mathrm{ip}}(\vec{\sb})$ in \eqref{eq:inchworm} is the sum of all the ``inchworm proper'' diagrams with nodes $\vec{\sb}$. The set of inchworm proper diagrams includes all linked diagrams, and we refer to the readers to \cite{Chen2017} for its precise definition. The integro-differential equation \eqref{eq: inchworm equation 1} can be derived from \eqref{eq:inchworm} by setting $\sf - \Sarr$ to be infinitesimal.
\end{remark}

 \subsubsection{Numerical method}
Similar to the case of the differential equation, we may use general explicit time integrator to solve \eqref{eq: inchworm equation} numerically. In this work, we focus on the numerical method proposed in \citep{Cai2020}, which is inspired by the second-order Heun's method:
\begin{equation}\label{def: scheme 1}
 \begin{split}
&G^*_{n+1,m} = (I+\sgn(t_n - t) \ii H_s h)G_{n,m} +  K_1 h,  \\
&G_{n+1,m} = (I + \frac{1}{2}\sgn(t_n - t)\ii H_s h  )G_{n,m} +\frac{1}{2}\sgn(t_{n+1} - t)\ii  H_s h  G^*_{n+1,m} +  \frac{1}{2}(K_1+K_2) h, \quad 0 \le m\le n\le 2N,
   \end{split}
\end{equation}
where $h=t/N$ (we again require $h \le 1$) is the time step length, and $G_{n,m}$ denotes the numerical approximation of the solution $\Ge(nh, mh)$. Different from the standard Heun's method for ODEs, the slope $K_1$ has to be computed based on a number of previous numerical solutions
\begin{equation}\label{eq: gbnm}
\gb_{n,m}:=(G_{m+1,m};G_{m+2,m+1},G_{m+2,m};\cdots;G_{n,n-1},\cdots,G_{n,m}).
\end{equation}
The explicit expression for $K_1$ is given by 
\begin{multline}\label{def: scheme 1 k1}
  K_1 =  F_1(  \gb_{n,m}) := \sgn(t_n - t)
 \sum^{\bar{M}}_{\substack{M=1 \\M \text{~is odd~}}}
\ii^{M+1}  \int_{t_n > \vec{\sb} > t_m } (-1)^{\#\{\vec{\sb} < t\}}W_s  I_h G(t_n,s_M) W_s  \cdots W_s \times  \\ \times I_h G(s_1,t_m) \Ls(t_n,\vec{\sb}) \dd \vec{\sb}, 
\end{multline}
where $I_h G(\cdot,\cdot)$ is obtained by piecewise linear interpolation on the triangular mesh shown in Figure \ref{fig:mesh and order} such that $I_h G(t_j,t_k) = G_{j,k}$ for all integers $m\le k\le j\le n$. Similarly, $K_2$ is given by 
\begin{multline}\label{def: scheme 1 k2}
K_2 = F_2(\gb^*_{n,m}) := \sgn(t_{n+1} - t)  \sum^{\bar{M}}_{\substack{M=1 \\ M \text{~is odd~}}} \ii^{M+1}  \int_{t_{n+1} > \vec{\sb} > t_m } (-1)^{\#\{\vec{\sb} < t\}}W_s I^*_h G(t_{n+1},s_M) W_s  \cdots W_s \times \\ \times I^*_h G(s_1,t_m) \Ls(t_{n+1},\vec{\sb}) \dd\vec{\sb} 
\end{multline}
where $\gb^*_{n,m}:=(\gb_{n,m};G_{n+1,n},\cdots,G_{n+1,m+1},G^*_{n+1,m})$ and $I^*_h G(\cdot,\cdot)$ is the linear interpolation such that
\begin{equation*}
I^*_h G(t_j,t_k) = 
\begin{cases}
  G_{j,k}, & \text{if } (j,k) \neq (n+1,m), \\
  G^*_{n+1,m}, & \text{if } (j,k) = (n+1,m).
\end{cases}
\end{equation*}
To implement this scheme, we compute each $G_{j,k}$ in the order illustrated in Figure \ref{fig:mesh and order}. Specifically, we calculate the propagators column by column from left to right, and for each column we start from the boundary value $G_{i,i} = \text{Id}$ (red ``\textcolor{red}{$\bullet$}'') locating on the diagonal and compute from top to bottom.

Due to the jump conditions \eqref{eq: jump condition}, we need a special treatment for the two discontinuities at $G_{N,k}$ (green ``\textcolor{green}{$\bullet$}'') and $G_{j,N}$ (blue ``\textcolor{blue}{$\bullet$}'') to achieve second-order convergence. In the numerical scheme, we keep two copies of  $G_{N,k}$ or $G_{j,N}$ representing the left- and right-limits when $s \to t^{\pm}$:
\begin{equation*}
(G_{N^+,k},G_{N^-,k}) \text{~and~} 
(G_{j,N^+},G_{j,N^-}) \text{~for~} 0\le k \le N-1,N+1 \le j \le 2N.
\end{equation*}
Here $G_{N^{\pm},k}$ and $G_{j,N^{\pm}}$ are, respectively, the approximation of $\lim\limits_{s \rightarrow t^{\pm}} G(s,
kh)$ and $\lim\limits_{s \rightarrow t^{\pm}} G(j h, s)$. The relation $G_{N^+,k} = O_s G_{N^-,k}$ and $G_{j,N^-} = G_{j,N^+} O_s$ are immediately derived from the jump conditions. Moreover, we note that the boundary value on the discontinuities are given by: $G_{N^+,N^+}=G_{N^-,N^-} =\text{Id}$ and $G_{N^+,N^-} = O_s$.

In the implementation, we need to follow the rules below while evolving the scheme \eqref{def: scheme 1} near the discontinuities:
\begin{itemize}
\item[(R1)] When $n = N-1$, the quantities $G_{n+1,m}^*$ and $G_{n+1,m}$ are regarded as $G_{N^-,m}^*$ and $G_{N^-,m}$, respectively, and $\sgn(t_{n+1}-t)$ takes the value $-1$. When $n = N$, the propagator $G_{n,m}$ is regarded as $G_{N^+,m}^*$, and $t_n-t$ takes the value $1$.

\item[(R2)] The value of $G_{n+1,N^+}$ is set to be $O_s G_{N^-,m}$; the value of $G_{n+1,N^-}$ is set to be $G_{n+1,N^+} O_s$. 

\item[(R3)] $\sgn(t_{N^-} - t) = -1$, $\sgn(t_{N^+} - t) = 1$.

\item[(R4)] The interpolation of $I_h G$ and $I_h^* G$ should respect such
  discontinuities. For example, the interpolating operator $I_h$ should satisfy
  \begin{gather*}
  \lim_{s\rightarrow t^{\pm}} I_h G(t_j, s) = G_{j,N^{\pm}}, \qquad
  \lim_{s\rightarrow t^{\pm}} I_h G(s, t_k) = G_{N^{\pm},k}, \\
  \lim_{s\rightarrow t^+}
    \lim_{\tilde{s} \rightarrow t^+} I_h G(\tilde{s}, s) = 
  \lim_{s\rightarrow t^-}
    \lim_{\tilde{s} \rightarrow t^-} I_h G(s, \tilde{s}) = \text{Id}, \qquad
  \lim_{s\rightarrow t^+}
    \lim_{\tilde{s} \rightarrow t^-} I_h G(s, \tilde{s}) = O_s.
  \end{gather*}
  The conditions for $I_h^* G$ are similar.
\end{itemize}


\begin{figure}[h]
\centering

\begin{tikzpicture}
 
   \draw [<->,thick] (5.5,0)--(0,0)--(0,5.5);
   \node[right] at (5.5,0) {$\sa$};\node[above] at (0,5.5) {$s_i$};

\draw [->,line width=0.7mm] (0.5,0.5)--(0.5,0);
\draw [->,line width=0.7mm] (1,1)--(1,0);
\draw [->,line width=0.7mm] (1.5,1.5)--(1.5,0);
\draw [->,line width=0.7mm] (2,2)--(2,0);
\draw [->,line width=0.7mm] (2.5,2.5)--(2.5,0);
\draw [->,line width=0.7mm] (3,3)--(3,0);
\draw [->,line width=0.7mm] (3.5,3.5)--(3.5,0);
\draw [->,line width=0.7mm] (4,4)--(4,0);
\draw [->,line width=0.7mm] (4.5,4.5)--(4.5,0);
\draw [->,line width=0.7mm] (5,5)--(5,0);

\draw plot[only marks,mark=*, mark options={color=black, scale=1}] coordinates {(0.5,0) (1,0) (1,0.5) (1.5,0) (1.5,0.5) (1.5,1) (2,0) (2,0.5) (2,1) (2,1.5)};

\draw plot[only marks,mark=*, mark options={color=black, scale=1}] coordinates {(3,0) (3,0.5) (3,1) (3,1.5) (3,2) (3.5,0) (3.5,0.5) (3.5,1) (3.5,1.5) (3.5,2) (4,0) (4,0.5) (4,1) (4,1.5) (4,2) (4.5,0) (4.5,0.5) (4.5,1) (4.5,1.5) (4.5,2) (5,0) (5,0.5) (5,1) (5,1.5) (5,2)};

\draw plot[only marks,mark=*, mark options={color=black, scale=1}] coordinates {(3.5,3) (4,3) (4,3.5) (4.5,3) (4.5,3.5) (4.5,4) (5,3) (5,3.5) (5,4) (5,4.5)};

\draw (0,0)--(5,5);
\draw (0.5,0)--(5,4.5);
\draw (1,0)--(5,4);
\draw (1.5,0)--(5,3.5);
\draw (2,0)--(5,3);
\draw (2.5,0)--(5,2.5);
\draw (3,0)--(5,2);
\draw (3.5,0)--(5,1.5);
\draw (4,0)--(5,1);
\draw (4.5,0)--(5,0.5);
\draw (0.5,0.5)--(5,0.5);
\draw (1,1)--(5,1);
\draw (1.5,1.5)--(5,1.5);
\draw (2,2)--(5,2);
\draw (2.5,2.5)--(5,2.5);
\draw (3,3)--(5,3);
\draw (3.5,3.5)--(5,3.5);
\draw (4,4)--(5,4);
\draw (4.5,4.5)--(5,4.5);

 \draw[dotted,thick] (2.5,2.5)--(0,2.5);\node[below] at (2.5,0) {$t$}; 
 \node[below] at (5,0) {$2t$}; 
 \node[left] at (0,2.5) {$t$}; 
  \node[below left] at (0,0) {$O$};

\draw plot[only marks,mark=*, mark options={color=red, scale=1}] coordinates {(0,0) (0.5,0.5) (1,1) (1.5,1.5) (2,2) (2.5,2.5) (3,3) (3.5,3.5) (4,4) (4.5,4.5) (5,5)};

\draw plot[only marks,mark=*, mark options={color=green, scale=1}] coordinates {(2.5,0) (2.5,0.5) (2.5,1) (2.5,1.5) (2.5,2)};

\draw plot[only marks,mark=*, mark options={color=blue, scale=1}] coordinates {(5,2.5) (4.5,2.5) (4,2.5) (3.5,2.5) (3,2.5)};

  \end{tikzpicture}

\caption{The uniform mesh and the order of computation for $N=5$.}
 \label{fig:mesh and order}
\end{figure}

As we will prove later, the above numerical method guarantees a second-order approximation of the solution. However, the computation cost is not affordable when $M$ is large since the degrees of freedom for calculating the integral with respect to $\vec{\sb}$ will grow exponentially w.r.t $M$. Therefore, we take advantage of Monte Carlo integration and replace the integrals by the averages of Monte Carlo samples, resulting in the following \emph{inchworm Monte Carlo method}: 
\begin{equation}\label{def: scheme 2}
 \begin{split}
&\tG^*_{n+1,m} = (I+\sgn(t_n - t) \ii H_s h)\tG_{n,m} +  \tK_1 h,  \\
&\tG_{n+1,m} = (I + \frac{1}{2}\sgn(t_n - t)\ii H_s h  )\tG_{n,m} +\frac{1}{2}\sgn(t_{n+1} - t)\ii  H_s h  \tG^*_{n+1,m} +  \frac{1}{2}(\tK_1+\tK_2) h, \quad 0 \le m\le n\le 2N,
   \end{split}
\end{equation}
with
\begin{align*}
 \tK_1 & = \frac{1}{N_s}\sum_{i=1}^{N_s} \widetilde{F}_1( \tg_{n,m};\vec{\sb}^{(i)})\\ & 
 := \frac{\sgn(t_n - t)}{N_s} \sum_{i=1}^{N_s}\sum^{\bar{M}}_{\substack{M=1 \\ M \text{~is odd~}}} 
\ii^{M+1}  \frac{(t_n-t_m)^M}{M!} (-1)^{\#\{\vec{\sb}^{(i)}_M < t\}}W_s I_h\tG(t_n,s^{(i)}_M) W_s  \cdots W_s \times \\ 
  & \hspace{150pt} \times I_h\tG(s^{(i)}_1,t_m) \Ls(t_n,\vec{\sb}^{(i)}_M),
\end{align*}
where $N_s$ denotes the number of samples and each time sequence $\vec{\sb}^{(i)}_M=(s_1^{(i)},s_2^{(i)},\cdots,s_M^{(i)})$ is drawn from the uniform distribution $U([t_m,t_n]^M)$ and then sorted such that $s_1^{(i)} < s_2^{(i)} < \cdots < s_M^{(i)}$. Similarly, we define
\begin{align*}
\tK_2 & =   \frac{1}{N_s}\sum_{i=1}^{N_s}  \widetilde{F}_2(\tg^*_{n,m};{\vec{\sb}}^{*(i)}) \\
& := \frac{\sgn(t_{n+1} - t)}{N_s} \sum_{i=1}^{N_s} \sum^{\bar{M}}_{\substack{M=1 \\ M \text{~is odd~}}} \ii^{M+1}  \frac{(t_{n+1}-t_m)^M}{M!} (-1)^{\#\{\vec{\sb}^{*(i)}_M < t\}}W_s I^*_h \tG(t_{n+1},s_M^{*(i)}) W_s  \cdots W_s \times \\
& \hspace{150pt} \times I^*_h \tG(s_1^{*(i)},t_m) \Ls(t_{n+1},\vec{\sb}^{*(i)}_M)
\end{align*}
with the time sequences $\vec{\sb}^{*(i)}_M = (s_1^{*(i)},s_2^{*(i)},\cdots,s_M^{*(i)})$ drawn from $U([t_m, t_{n+1}]^M)$ and then sorted as $s_1^{*(i)} < s_2^{*(i)} < \cdots < s_M^{*(i)}$.

Our goal in this paper is to study the error growth of the inchworm Monte Carlo method \eqref{def: scheme 2} by comparing it with the numerical sign problem for the classic Dyson series as discussed in \eqref{dyson sign problem}. While it is claimed in \cite{Chen2017,Chen2017b} that the inchworm Monte Carlo method can effectively mitigate such sign problem, a detailed argument on this mitigation has not been provided. Therefore, we aim at a rigorous numerical analysis for the scheme, which will begin from the next section.

\section{Numerical analysis for inchworm Monte Carlo method}
\label{sec: int diff eq}


\subsection{Notation and Assumptions}
To facilitate our analysis, we first introduce some notations and assumptions in this section.
 
\subsubsection{Vectorization and norms}
For a sequence of matrices defined as $\yb:=(Y_{1},Y_{2},\cdots,Y_{\ell}) \in \C^{2\times 2\ell}$, we define its vectorization $\vec{\yb}$ by
\begin{equation}
\vec{\yb}=(Y^{(11)}_{1},Y^{(21)}_1,Y^{(12)}_1,Y^{(22)}_1,\cdots,Y^{(11)}_{\ell},Y^{(21)}_\ell,Y^{(12)}_\ell,Y^{(22)}_\ell)^{\TT} \in \C^{4\ell},
\end{equation}
which reshapes the matrix into a column vector. The same notation applies to a single matrix. For example, for $Y = (Y^{(ij)})_{2\times 2}$, we have $\vec{Y} = (Y^{(11)}, Y^{(21)}, Y^{(12)}, Y^{(22)})^{\TT}$. For any function $F(\boldsymbol{y})$, its gradient $\nabla F(\boldsymbol{y})$ is defined as a $4\ell$-dimensional vector:
\begin{equation*}
\nabla F(\yb) = \left(
\frac{\partial F}{\partial Y_1^{(11)}},
\frac{\partial F}{\partial Y_1^{(21)}},
\frac{\partial F}{\partial Y_1^{(12)}},
\frac{\partial F}{\partial Y_1^{(22)}}, \cdots,
\frac{\partial F}{\partial Y_{\ell}^{(11)}},
\frac{\partial F}{\partial Y_{\ell}^{(21)}},
\frac{\partial F}{\partial Y_{\ell}^{(12)}},
\frac{\partial F}{\partial Y_{\ell}^{(22)}}
\right)^{\TT},
\end{equation*}
so that the mean value theorem is denoted by
\begin{equation*}
F(\yb_2) - F(\yb_1) = \nabla F\Big( (1-\xi) \yb_1 + \xi \yb_2 \Big)^{\TT} (\vec{\yb}_2 - \vec{\yb}_1), \qquad \text{for some } \xi \in [0,1].
\end{equation*}
The Hessian matrix $\nabla^2 F(\yb)$ is similarly defined as a $4\ell \times 4\ell$ matrix.

Let $\Is$ be an index set and $\gb = (G_{\alpha})_{\alpha \in \Is}$ be a collection of random matrices with each $G_{\alpha} \in \C^{2\times 2}$. For any $\Ds \subset \Is$, we define 
\begin{equation}
\|\gb\|_{\Ds} := \max_{\alpha \in\Ds}\{\|  G_{\alpha}  \|_\FF\},
\end{equation}
where $\|\cdot\|_\FF$ denotes the Frobenius norm. When $\Ds = \Is$, the subscript $\Ds$ will be omitted: $\|\gb\| = \|\gb\|_{\Is}$. It is clear that $\|\cdot\|$ defines a norm, and $\|\cdot\|_{\Ds}$ is a seminorm if $\Ds \subset \Is$. In particular, for any $2\times 2$ matrix $G$, we define $\|G\| = \|G\|_{\FF}$.  In our analysis, the index $\alpha$ is always a 2D multi-index. For example, if $n > m$ and
\begin{equation*}
\Is = \{ (m+1,m) \} \cup \{ (m+2,m+1), (m+2,m) \} \cup \cdots \cup \{ (n,n-1), \cdots, (n,m)\},
\end{equation*}
then $\gb$ equals $\gb_{n,m}$ defined in \eqref{eq: gbnm}. Similarly, we define
\begin{equation}
\Ns^{(\std)}_{\Ds}(\gb) = \max_{\alpha \in \Ds} \left\{ \left[ \E( \|G_\alpha \|_{\FF}^2) \right]^{1/2} \right\}
\end{equation}
which will be often used throughout our analysis.


\subsubsection{Boundedness assumptions}
We will need the following assumptions for our analysis:
\begin{enumerate}

\item[(H1)] \label{assump: H1}
The exact solution of \eqref{eq: inchworm equation} $\Ge$, numerical solution $G$ solved by the deterministic method \eqref{def: scheme 1} and numerical solution $\tG$ solved by inchworm Monte Carlo method \eqref{def: scheme 2} are all bounded by a $O(1)$ constant:
\begin{equation}
\begin{gathered}
\|\Ge(\sa,\si)\| \le \ \bdG \text{~for any~} 0 \le \si \le \sa \le 2t; \\
\|\tG_{j,k}\|,\|G_{j,k}
\| \le \bdG\text{~for any~}  j,k = 0,1,\cdots,N-1,N^-,N^+,N+1,\cdots 2N-1,2N.
\end{gathered}
\end{equation}

\item[(H2)] \label{assump: H2}
Each $rs-$entry ($r,s=1,2$) of the full propagator $\Ge^{(rs)}(\sa,\si)$ is of class $C^3$ on the domain $\si\in[0,2t]\backslash \{t\},\ \sa \in [\si,2t]\backslash \{t\}$ and we define the following upper bounds:
\begin{align*}
\left| \frac{\partial^\alpha \Ge^{(rs)}}{\partial \sa^{\alpha_1} \partial \si^{\alpha_2}}(x_1,x_2) \right|  \le 
\begin{cases}
  \bdG'', &\text{for~} \alpha = \alpha_1 + \alpha_2 = 2, \\
  \bdG''', &\text{for~}\alpha = \alpha_1 + \alpha_2 = 3, 
\end{cases}
\end{align*}
for any $x_2 \in[0,2t]\backslash \{t\},\ x_1 \in [x_2,2t]\backslash \{t\}$.

\item[(H3)] \label{assump: H3}
We further assume that system Hamiltonian $H_s$ and system perturbation $W_s$ can also be bounded by $O(1)$ constants:
\begin{equation}
 \|H_s\| \le \bdH, \ \|W_s\| \le \bdW. 
\end{equation}
As mentioned in Section \ref{sec: int diff eq intro}, the two-point correlation is assumed to be bounded by $|B(\cdot,\cdot)| \le \bdL$ for some $O(1)$ constant $\bdL$. According to the definition \eqref{eq:L inchworm} for $\Ls$, we bound  
 \begin{equation}\label{eq: L bound}
|\Ls(s_m,s_{m-1},\cdots,s_1)| \le  (m-1)!! \bdL^{\frac{m}{2}}.
\end{equation}

\end{enumerate}

\begin{remark}
Recall that \eqref{eq: L bound} is used as the upper bound for the bath influence functional $|\Ls^{(0)}(s_m,\cdots,s_1)|$ in Dyson series when analyzing the sign problem \eqref{dyson sign problem}. Since $\Ls(s_m,\cdots,s_1)$ includes fewer diagrams, $|\Ls(s_m,\cdots,s_1)|$ can actually be controlled by a lower bound as $\alpha(m) (m-1)!! \bdL^{\frac{m}{2}}$ with a coefficient $\alpha(m) \in (0,1)$. In fact, the factor $\alpha(m)$ is the reason why the series \eqref{eq: calH} has faster convergence than the Dyson series \eqref{eq:Ge_Dyson}. Here we are mainly interested in the case of a finite $\bar{M}$ (the upper bound of $M$), and therefore the looser bound \eqref{eq: L bound} does not change the order of the error and its general growth rate.
\end{remark}

\subsection{Main results and discussions on the error growth} \label{sec: inchworm results}

In this section, we will provide our main results for the error estimation of the inchworm Monte Carlo method, and compare the results with the error growth of Dyson series expansion. The following theorem gives the difference between the inchworm Monte Carlo method \eqref{def: scheme 2} and the deterministic scheme \eqref{def: scheme 1}:
\begin{theorem} \label{thm: bounds}
Let $\dG_{n,m}= \tG_{n,m} - G_{n,m}$. Given a sufficiently small time step length $h$ and a sufficiently large $N_s$, if the assumptions (H1) and (H3) hold, the difference between the deterministic solution and the Monte Carlo solution can be estimated by
\begin{itemize}
\item Bias estimation
\begin{equation}\label{1st error upper bound}
\|\E(\Delta G_{n+1,m})\| \le 4\theta_2^2 \bar{\alpha}(t_{n-m+1})\bar{\gamma}(t_{n-m+1})\left(\ee^{3 \theta_1 \sqrt{P_1(t_{n-m+1})} t_{n-m+1}}\right)\cdot \frac{h}{N_s}
\end{equation}
\item Numerical error estimation
\begin{equation}\label{2ed error upper bound}
    [\E(\|\dG_{n+1,m}\|^2)]^{1/2} \le  \theta_2 \sqrt{\bar{\gamma}(t_{n-m+1})}\left(\ee^{ \theta_1 \sqrt{P_1(t_{n-m+1})} t_{n-m+1}}\right) \cdot \sqrt{\frac{h}{N_s}} 
\end{equation}
\end{itemize}
Here
\begin{align} \label{eq: alpha gamma}
&\bar{\alpha}(t)= 16P_2(t)\cdot (10t+16t^2 + 5t^3 + \frac{1}{4}t^4), \qquad
\bar{\gamma}(t) =  2\bdW \bdG \bdL^{1/2}   \sum^{\bar{M}}_{\substack{M=1 \\ M \text{~is odd~}}}  \frac{(\bdW \bdG \bdL^{1/2} t)^M}{(M-1)!!}, \\
\label{eq: P1}
& P_1(t) = 2 \bdW^2 \bdG \bdL +  3\bdW^3 \bdG^2 \bdL^{\frac{3}{2}} (1+t)  \sum^{\bar{M}}_{\substack{M=3 \\ M ~\text{is odd}}} \frac{(M-1)M}{(M-3)!!}(2\bdW \bdG \bdL^{1/2}t)^{M-2}
\end{align}
and the function $P_2(\cdot)$ is a polynomial of degree $\bar{M}-1$ and are independent of $h$. 
The constants $\theta_1$ and $\theta_2$ are given by $\theta_1 = 353$ and $\theta_2 = \sqrt{34}$.
\end{theorem}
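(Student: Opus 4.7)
The plan is to mirror the proof strategy used for the ODE case (Proposition 1 and Theorem 1): first derive coupled pointwise recurrences for the bias $\|\E(\dG_{n+1,m})\|$ and the mean-square error $\E(\|\dG_{n+1,m}\|^2)$, and then close them via a discrete Gronwall iteration. Subtracting \eqref{def: scheme 1} from \eqref{def: scheme 2} yields the identity
$\dG_{n+1,m} = \bigl(I + \tfrac{1}{2}\sgn(t_n-t)\ii H_s h\bigr)\dG_{n,m} + \tfrac{1}{2}\sgn(t_{n+1}-t)\ii H_s h\,\dG^{*}_{n+1,m} + \tfrac{h}{2}\bigl[(\tK_1 - K_1)+(\tK_2 - K_2)\bigr]$, together with an analogous identity for $\dG^{*}_{n+1,m}$. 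Assumption (H3) implies that the Hamiltonian prefactors contribute only benign $1+O(h)$ factors, so the substance of the proof is the control of the two ``slope differences''.

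For each slope I would write, for instance, $\tK_1 - K_1 = \frac{1}{N_s}\sum_{i}[\widetilde{F}_1(\tg_{n,m};\vec{\sb}^i)-F_1(\gb_{n,m})]$, and insert the intermediate value $\widetilde{F}_1(\gb_{n,m};\vec{\sb}^i)$ to split the summand into a pure Monte Carlo sampling error (zero mean, since $\gb_{n,m}$ is deterministic and the samples $\vec{\sb}^i$ are fresh) and a history-propagation error. The variance of the first piece is controlled by the $L^2$-size of one sample $\widetilde{F}_1(\gb_{n,m};\vec{\sb}^i)$; bounding each matrix product through (H1) and (H3) and using the influence functional bound \eqref{eq: L bound}, then summing the $M$-fold simplex volumes $(t_n - t_m)^M/M!$, gives exactly the quantity $\bar{\gamma}(t_{n-m+1})$ from \eqref{eq: alpha gamma}. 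The history-propagation piece is handled by Taylor expansion of $\widetilde{F}_1(\cdot;\vec{\sb}^i)$ around $\gb_{n,m}$: since $\widetilde{F}_1$ is polynomial in the entries of its first argument, the gradient removes one propagator factor from each product, and its norm is bounded by $\sqrt{P_1(t_{n-m+1})}\cdot\Ns^{(\std)}_{\Ds_{n,m}}(\dG)$ with $\Ds_{n,m}=\{(j,k):m\le k\le j\le n\}$, producing precisely \eqref{eq: P1}. For the bias estimate, the second-order Taylor remainder is retained, and its bound through (H1) and (H3) yields the additional polynomial $P_2(t_{n-m+1})$ inside $\bar{\alpha}$.

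The main obstacle is closing the recurrence in the face of triangular history dependence: $\dG_{n+1,m}$ is driven not by $\dG_{n,m}$ alone but by the entire family $(\dG_{j,k})_{(j,k)\in\Ds_{n,m}}$, so a naive step-by-step Gronwall fails. I would introduce the cumulative maxima $\mu_n = \max_{0\le k\le j\le n}[\E(\|\dG_{j,k}\|^2)]^{1/2}$ and $\beta_n = \max_{0\le k\le j\le n}\|\E(\dG_{j,k})\|$, and show that the pointwise bounds above imply the monotone recurrences $\mu_{n+1}^{2} \le \bigl(1 + c_1 h\sqrt{P_1(t_{n+1})}\bigr)\mu_n^{2} + c_2\, \bar{\gamma}(t_{n+1})^2 \, h^2/N_s$ and an analogous one in $\beta_n$ coupled to $\mu_n$, structurally identical to \eqref{eq: diff recurrence 1}. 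Iterating these monotone recurrences by the discrete Gronwall lemma over $n$ produces the exponential factors $\ee^{\theta_1\sqrt{P_1(t_{n-m+1})}\,t_{n-m+1}}$ and $\ee^{3\theta_1\sqrt{P_1(t_{n-m+1})}\,t_{n-m+1}}$, together with the $\sqrt{h/N_s}$ and $h/N_s$ scaling required in \eqref{2ed error upper bound} and \eqref{1st error upper bound}.

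The explicit constants $\theta_1 = 353$ and $\theta_2 = \sqrt{34}$ arise from careful bookkeeping along this chain: factors of $2$ from $\|I+\tfrac{h}{2}\ii H_s\|$, a factor of $4$ from splitting $(\tK_1-K_1)+(\tK_2-K_2)$ into its Monte Carlo and history pieces for both slopes, factors from Cauchy--Schwarz in converting $L^1$ bounds into $L^2$ variance bounds, and the constants appearing in the discrete Gronwall step. This tracking, together with the separate treatment needed at $n=N$ or $m=N$ to respect the jump conditions \eqref{eq: jump condition} via rules (R1)--(R4), is technically laborious but routine once the pointwise recurrences described above are established.
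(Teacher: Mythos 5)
There is a genuine gap at the heart of your argument: you never derive the $\sqrt{P_1}$ in the exponent, you assert it. You claim the history-propagation piece is controlled by a gradient of norm $\sqrt{P_1(t_{n-m+1})}\cdot\Ns^{(\std)}_{\Ds_{n,m}}(\dG)$ and that the cumulative maxima satisfy $\mu_{n+1}^2\le(1+c_1h\sqrt{P_1})\mu_n^2+\dots$, but the actual first-derivative estimates of $F_1$ (Proposition \ref{thm: first order derivative}) give $P_1(t)h$ for nodes on $\partial\Omega_{n,m}$ and $P_1(t)h^2$ for interior nodes --- no square root appears at this stage, and the distinction between boundary and interior nodes is essential. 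Summing these location-dependent bounds over the triangular history produces a memory term of the form $c\,h^2\sum_{i=1}^{n-m}\bigl(2+(n-m+1-i)h\bigr)\Ns^{(\std)}_{\Gamma^*_{n,m}(i)}(\Delta\gb^*_{n,m})$, indexed by diagonals; this is structurally a discrete second-order difference equation (an analogue of $y''=cy$), and the growth rate $e^{\theta_1\sqrt{P_1}\,t}$ emerges only from solving its characteristic polynomial (the paper's \eqref{eq: rec num err 1}--\eqref{eq:A_j} and \ref{sec: char poly}) or, equivalently, from an induction hypothesis of the form $\Ns^{(\std)}_{\Gamma^*_{n,m}(l+1)}\le\theta_2\sqrt{\bar\gamma}\sqrt{h/N_s}\,(1+\theta_1\sqrt{P_1}h)^l$ that records geometric decay across diagonals. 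Replacing the diagonal-resolved quantities by a single cumulative maximum over all of $\Ds_{n,m}$ destroys exactly this structure: plugging the flat maximum into the memory term gives an amplification factor $1+O(P_1(t+t^2)h)$ per step, hence a Gronwall bound $e^{CP_1 t^2}$ or worse, not $e^{C\sqrt{P_1}t}$, and the recurrence you wrote down cannot be obtained from the pointwise estimates.

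A secondary but still essential omission: to reach the $\sqrt{h/N_s}$ scaling you need the analogue of the semi-stochastic bridge (the scheme \eqref{def: scheme 3} and Lemma \ref{thm: ktilde vs kbar}) to estimate the cross term $h\,\E\,\mathrm{tr}\bigl((\,\cdot\,)^\dagger A_{n,m}(h)\dG_{n,m}\bigr)$ in the expansion of $\E\|\dG_{n+1,m}\|^2$; a direct Cauchy--Schwarz there produces an additive $O(h/N_s)$ per step and only yields $O(\sqrt{1/N_s})$ overall. The paper in fact needs \emph{both} recurrences \eqref{eq: recurrence 2 1/2} (linear, to fix the growth rate) and \eqref{eq: recurrence 2} (quadratic, to fix the $h$-scaling), and your proposal collapses them into one, losing one or the other.
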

The difference between the results of the inchworm Monte Carlo method and the exact solution is given by
\begin{theorem}\label{thm: final estimates}
Under the same settings as Theorem \ref{thm: bounds}, if we further assume that (H2) holds, then the difference between the inchworm Monte Carlo solution and the exact solution can be estimated by
\begin{itemize}
\item Bias estimation
\begin{equation}\label{final 1st error upper bound}
   \begin{split}
& \left\|\E\left(  G_{\emph{e}}(t_{n+1},t_m) - \tG_{n+1,m}   \right)\right\| \le  P^{\emph{e}}(t_{n-m+1})\left(\ee^{\theta_1 \sqrt{P_1(t_{n-m+1})} t_{n-m+1}}\right)\cdot h^2  \\
& \qquad + 4\theta_2^2 \bar{\alpha}(t_{n-m+1})\bar{\gamma}(t_{n-m+1})\left(\ee^{3 \theta_1 \sqrt{P_1(t_{n-m+1})} t_{n-m+1}}\right)\cdot \frac{h}{N_s},
   \end{split}
\end{equation}
\item Numerical error estimation
\begin{equation}\label{final 2ed error upper bound}
  \begin{split}
&\left[\E\left( \| G_{\emph{e}}(t_{n+1},t_m) - \tG_{n+1,m}\|^2 \right) \right]^{1/2} \le    P^{\emph{e}}(t_{n-m+1})\left(\ee^{\theta_1 \sqrt{P_1(t_{n-m+1})} t_{n-m+1}}\right)\cdot h^2  \\
&\hspace{120pt}+  \theta_2 \sqrt{\bar{\gamma}(t_{n-m+1})}\left(\ee^{ \theta_1 \sqrt{P_1(t_{n-m+1})} t_{n-m+1}}\right) \cdot \sqrt{\frac{h}{N_s}} .
  \end{split}
\end{equation}
\end{itemize}
Here the function $P^{\emph{e}}(t)$ is defined by
\begin{equation} \label{eq: Ce}
       P^{\emph{e}}(t) =  \left( \frac{1}{4} \bdH + 8P_1(t) \right)\bdG'' + \frac{5}{12} \bdG'''  +  \bdW \bdG'' \bdL^{1/2}  \left( \sum^{\bar{M}}_{\substack{M=1 \\ M \text{~is odd~}}} \frac{M+1}{(M-1)!!}(2\bdW \bdG \bdL^{1/2} t)^M \right).
\end{equation}
\end{theorem}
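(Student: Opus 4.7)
The plan is to combine Theorem~\ref{thm: bounds} with a second-order convergence result for the purely deterministic scheme \eqref{def: scheme 1} via the triangle inequality $\Ge(t_{n+1},t_m) - \tG_{n+1,m} = [\Ge(t_{n+1},t_m) - G_{n+1,m}] + [G_{n+1,m} - \tG_{n+1,m}]$. The second bracket is already controlled by \eqref{1st error upper bound} and \eqref{2ed error upper bound}, producing the $h/N_s$ and $\sqrt{h/N_s}$ contributions in \eqref{final 1st error upper bound}--\eqref{final 2ed error upper bound}. What remains is to prove the purely deterministic bound $\|\Ge(t_{n+1},t_m) - G_{n+1,m}\| \le P^{\text{e}}(t_{n-m+1})\ee^{\theta_1\sqrt{P_1(t_{n-m+1})}\,t_{n-m+1}} h^2$, which is the promised second-order convergence of the Heun-type scheme applied to the integro-differential equation \eqref{eq: inchworm equation}.

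For the deterministic error I would follow a classical consistency--stability argument. The local truncation error is obtained by substituting the exact solution $\Ge$ into both lines of \eqref{def: scheme 1} and Taylor-expanding around $(t_n,t_m)$: by (H2) the third-order derivatives produce a contribution proportional to $\bdG''' h^3$, yielding the $\tfrac{5}{12}\bdG'''$ term in $P^{\text{e}}$, while the predictor--corrector residual produces an $h^3\,\partial_{\sa}^2 \Ge$ term which, upon differentiating \eqref{eq: inchworm equation} once more and using $\partial_{\sa}\Ge = \sgn(\sa-t)\ii H_s \Ge + \Hs$, is responsible for the $(\tfrac14 \bdH + 8 P_1(t))\bdG''$ coefficient. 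Differentiating the series \eqref{eq: calH} with respect to $\sa$ and inserting the bounds (H3) together with \eqref{eq: L bound} then accounts for the integral contribution $\bdW\bdG''\bdL^{1/2}\sum_M\cdots$ in \eqref{eq: Ce}. I would also estimate the quadrature and interpolation error committed when $I_h G$ replaces $\Ge$ inside $K_1$ and $K_2$; by (H2) piecewise linear interpolation on the triangular mesh of Figure~\ref{fig:mesh and order} is second-order accurate, and this $O(h^2)$ contribution is absorbed into $P^{\text{e}}(t)$.

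The stability step propagates the error $\Delta_{n,m} := \Ge(t_n,t_m) - G_{n,m}$ across the mesh. Since $K_1,K_2$ depend on the whole triangular history $\gb_{n,m}$ through $I_hG$, it is natural to track the seminorm $\|\Delta\|_{\Ds}$ over the index sets appearing in \eqref{eq: gbnm}. Expanding $F_1(\gb_{n,m})-F_1(\gb_{n,m}^{\text{exact}})$ by the mean-value theorem and bounding each factor of the integrand $W_s I_hG\, W_s\cdots W_s I_h G$ by (H1), (H3) and \eqref{eq: L bound}---in exactly the same fashion as in the proof that yields \eqref{2ed error upper bound}---produces a recurrence of the form $\|\Delta\|_{\Ds_{n+1,m}} \le (1 + C_1 h + \theta_1\sqrt{P_1(t_{n-m+1})}\,h)\|\Delta\|_{\Ds_{n,m}} + C h^3$, after which the discrete Gronwall inequality yields the exponential factor $\ee^{\theta_1\sqrt{P_1(t_{n-m+1})}\,t_{n-m+1}}$ multiplying an overall $O(h^2)$ global error.

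The main obstacle will be the stability step: unlike a standard ODE scheme, the slope functionals $F_1,F_2$ depend nonlinearly on the entire past through the triangular history, and each additional factor in the product $W_s I_h G \cdots W_s I_h G$ contributes another copy of the error, so the combinatorial accumulation over $M$ and over past indices must be organized carefully so that only the $\sqrt{P_1(t)}$ factor (rather than a worse growth rate) enters the exponent and the constant $\theta_1$ can be taken the same as in Theorem~\ref{thm: bounds}. One must also respect the jump conditions \eqref{eq: jump condition} and the prescriptions (R1)--(R4) near $s=t$, which requires treating the left and right limits $G_{N^{\pm},k}$, $G_{j,N^{\pm}}$ separately both in the interpolation-error estimate and in the recurrence for $\|\Delta\|_{\Ds}$. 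Once the deterministic bound is established, \eqref{final 1st error upper bound}--\eqref{final 2ed error upper bound} follow immediately by triangle inequality with \eqref{1st error upper bound}--\eqref{2ed error upper bound}.
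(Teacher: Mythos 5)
Your proposal follows essentially the same route as the paper: the paper proves Theorem~\ref{thm: final estimates} by combining Theorem~\ref{thm: bounds} with the deterministic estimate of Proposition~\ref{thm: Runge Kutta error} via the triangle inequality, and Proposition~\ref{thm: Runge Kutta error} is itself proved by exactly the consistency--stability split you describe (Taylor expansion giving the $\bdG'''$ and $\bdH\bdG''$ terms, linear-interpolation error giving the $\bdW\bdG''\bdL^{1/2}\sum_M$ term, and a history-dependent recurrence whose growth rate is extracted from the characteristic polynomial of an associated constant-coefficient difference equation, yielding the $\ee^{\theta_1\sqrt{P_1}t}$ factor). The one point your sketch compresses is that the stability recurrence is not a one-step Gronwall bound but involves a sum over all previous diagonals with $O(P_1 h^2)$ coefficients, and the $\sqrt{P_1}$ in the exponent emerges only after reducing that full-history recurrence to a three-term relation --- but you correctly flag this as the main obstacle, so the plan is sound.
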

The above result indicates the following properties of the inchworm scheme, which are similar to the case of the differential equations:
\begin{itemize}
\item The bias again has only a small contribution to the numerical error, which is often hardly observable in the numerical experiments. 
\item The error consists of two parts. The first part is second-order in $h$, and the second part is half-order in the total number of samples.
\end{itemize}

The growth of the numerical error over time is more complicated compared to the ODE case. Since the function $P_1(t)$ is a polynomial of degree $\bar{M}-1$, the growth of the error is on the order of $\exp(C t^{(\bar{M}+1)/2})$. Clearly the growth rate depends on the choice of $\bar{M}$. In the numerical examples shown in \cite{Chen2017b,Cai2020}, only $\bar{M} = 1$ and $\bar{M} = 3$ are used in the applications. Regarding the behavior of the error growth for different $\bar{M}$, we remark that 
\begin{itemize}
\item When $\bar{M}=1$, the final error estimation \eqref{final 2ed error upper bound} shows that there exists constants $C_1$ and $C_2$ such that
\begin{displaymath}
\left[\E\left( \| \Ge(t_{n+1},t_m) - \tG_{n+1,m}\|^2 \right) \right]^{1/2} \le C_1 \ee^{C_2 t_{n-m+1}} \left( h^2 + \sqrt{\frac{h}{N_s}} \right),
\end{displaymath}
showing that the error grows exponentially with respect to the time difference in the propagator, which is slower than the method using Dyson series, where the error grows exponentially with respect to the square of the time difference. In this case, the numerical error is successfully mitigated.
\item When $\bar{M}=3$, there exist constants $C_1$ and $C_2$ such that
\begin{displaymath}
\left[\E\left( \| \Ge(t_{n+1},t_m) - \tG_{n+1,m}\|^2 \right) \right]^{1/2} \le C_1 \ee^{C_2 (t_{n-m+1}+t_{n-m+1}^2)} \left( h^2 + \sqrt{\frac{h}{N_s}} \right).
\end{displaymath}
In this case, the growth rate is exponential in $t^2$, which is the same as the Dyson series. Thus which method has greater error depends on the coefficient in front of $t^2$. Instead of a detailed analysis, we would just comment that the inchworm Monte Carlo method is likely to have a smaller coefficient due to the effect of partial resummation, which leads to less terms in  \eqref{eq: calH} than the original Dyson series.
\item For larger $\bar{M}$, the error growth $\ee^{\theta_1 \sqrt{P_1(t)}t}$ of the inchworm Monte Carlo method in general is faster than quadratic exponential for Dyson series. However when $t$ is small, the difference between $\sqrt{P_1(t)}t$ for $\bar{M}>3$ and $\bar{M}=3$ will not be too large (see Figure \ref{fig:P1}). Since the coefficient of $t^k$ is smaller when $k$ is larger, we may again expect the inchworm Monte Carlo method has slower overall error growth for short-time simulations. Here we would also like to mention that in practice, the departure of the curve for $\bar{M}=5$ from the curve for $\bar{M}=3$ is often observed to be much later than that indicated in Figure \ref{fig:P1}, since our estimation of the constants in $P_1(t)$ is based on the worst case and may not be optimal. It has been observed in the literature that the fast error growth may stay unnoticeable for a significant amount of time despite our error analysis \cite{Chen2017b}. 
\begin{figure}[h!]
    \centering
    \includegraphics[width=0.5\textwidth]{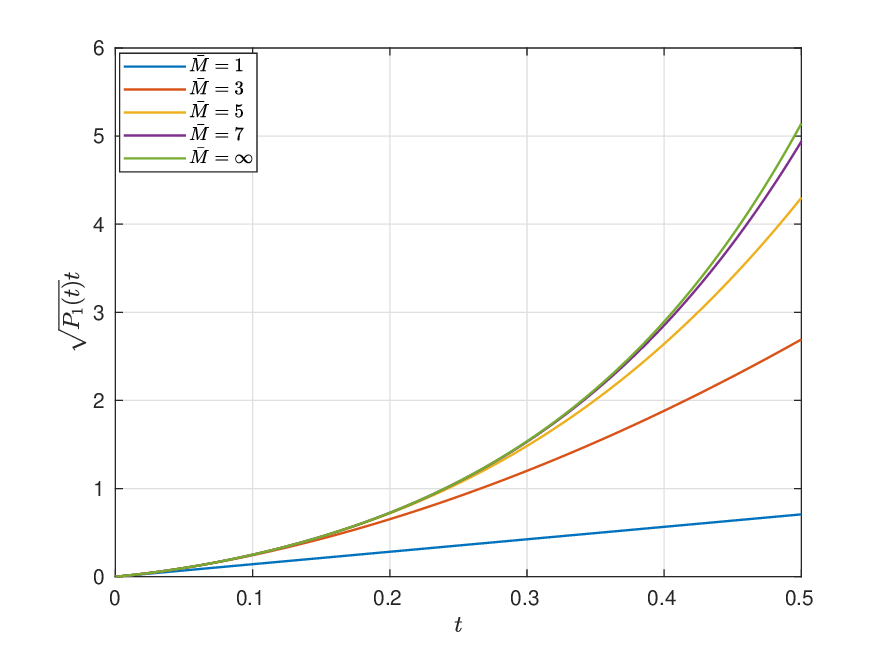}
    \caption{Graphs of $\sqrt{P_1(t)}\cdot t$ for various $\bar{M}$ with $\bdW=\bdG=\bdL = 1$. The curves for $\bar{M}=3$ and $\bar{M}=+\infty$ are almost identical before $t=0.15$.}
    \label{fig:P1}
\end{figure}
\item When $\bar{M} \rightarrow +\infty$, we have
  \begin{align*}
  \lim_{\bar{M}\rightarrow +\infty} \bar{\gamma}(t) &= 2\bdW^2 \bdG^2 \bdL t \ee^{ \frac{\bdW^2 \bdG^2 \bdL t^2}{2} }, \\
  \lim_{\bar{M}\rightarrow +\infty} P_1(t) &= 2\bdW^2 \bdG \bdL + 3\bdW^3 \bdG^2 \bdL^{\frac{3}{2}} (1+t) P(2\bdW \bdG \bdL^{1/2}t) \cdot \ee^{2\bdW^2 \bdG^2 \bdL t^2},
  \end{align*}
  where $P(x) = x^5 + 7x^3 + 6x$. Although these quantities are still finite, the error bound \eqref{final 2ed error upper bound} grows double exponentially with respect to $t^2$, which is undesired in applications.
\end{itemize}

The numerical experiments in \cite{Chen2017b,Cai2020} show that in certain regimes where the constant $\bdL$ is relatively small, the contribution from $M = 1$ is dominant in the series \eqref{eq: calH}. In this case, the inchworm Monte Carlo method can well suppress the numerical sign problem and achieve an exponential error growth in these applications.

\subsection{Outline of the proof}
We will postpone the details of the proof while provide an outline here. The results are obtained in the following steps:
\begin{itemize}
\item Estimate the derivatives of the right-hand sides (Propositions \ref{thm: first order derivative} and \ref{thm: second order derivative}).
\item Derive recurrence relations for the numerical error (Proposition \ref{thm: recurrence relations}).
\item Apply the recurrence relations to derive the error bounds (Theorem \ref{thm: bounds}).
\item Estimate the error of the deterministic method (Proposition \ref{thm: Runge Kutta error}).
\item Use the triangle inequality to derive the final error bounds (Theorem \ref{thm: final estimates}).
\end{itemize}
Some more details of these steps are given by a number of propositions below.

We first define some sets of 2D multi-indices that will be used.
\begin{align} 
\begin{array}{l l}
\Omega_{n,m}=\{(j,k) \in \Z^2 \ | \  m \le k < j \le n     \}, & \Omega^*_{n,m}= \Omega_{n+1,m}; \\
\partial \Omega_{n,m}=\{(j,k)\in \Omega_{n,m}\ | \ j=n\text{~or~}k=m\}, &\partial \Omega^*_{n,m}=\partial \Omega_{n+1,m}; \\
\mathring{\Omega}_{n,m}= \Omega_{n,m}\backslash \partial \Omega_{n,m}, &\mathring{\Omega}^{\ast}_{n,m}=\mathring{\Omega}_{n+1,m};\\
\Gamma_{n,m}(i) = \{(j,k)\in \Omega_{n,m}\ | \ j-k= i\}, & \Gamma^*_{n,m}(i)=\Gamma_{n+1,m}(i).
\end{array}
\end{align}
One may refer to Fig.~\ref{fig:sets} to visualize these definitions. Note that $\Omega_{n,m}$ and $\Omega^*_{n,m}$ respectively contain indices of the numerical solutions in $\gb_{n,m}$ and $\gb^*_{n,m}$ and thus give the locations of all nodes that $K_1$ and $K_2$ depend on. 
In addition, since $G^*_{n+1,m}$ is calculated completely based on the rest of $\gb^*_{n,m}$, we define $\bar{\Omega}_{n,m}=\Omega^*_{n,m}\backslash \{(n+1,m)\}$ to represent the indices of all full propagators that we actually use in order to obtain $G_{n+1,m}$.


\begin{figure}[h]
\centering

\begin{tikzpicture}
 
   \draw [<->,thick] (5,0)--(0,0)--(0,5);
   \node[right] at (5,0) {$j$};\node[above] at (0,5) {$k$};
   
   \draw (1,1)--(4.5,4.5)--(4.5,1)--(1,1);

   \draw plot[only marks,mark=x, mark options={color=red, scale=1.5}] coordinates {(1.5,1) (2,1) (2.5,1) (3,1) (3.5,1) (4,1) };

  \draw plot[only marks,mark=otimes, mark options={color=red, scale=1.2}] coordinates
{(4,1.5) (4,2) (4,2.5) (4,3) (4,3.5)};

\draw plot[only marks,mark=*, mark options={color=red, scale=1}] coordinates {(2,1.5) (2.5,1.5) (2.5,2) (3,1.5) (3,2) (3,2.5) (3.5,1.5) (3.5,2) (3.5,2.5) (3.5,3)};

   \draw plot[only marks,mark=x, mark options={color=green, scale=1.5}] coordinates {(4.5,1.5) (4.5,2) (4.5,2.5) (4.5,3) (4.5,3.5) (4.5,4)};

   \draw plot[mark=x, mark options={color=blue, scale=1.5}] coordinates {(4.5,1) };

 \draw[dotted,thick] (1,0)--(1,1);\node[below] at (1,0) {$2$}; 
 
  \draw[dotted,thick] (4,0)--(4,1);\node[below] at (4,0) {$8$}; 
 
       \draw[dotted,thick] (4.5,0)--(4.5,1); \node[below] at (4.5,0) {$9$};
   
        \draw[very thick] (2,1)--(4.5,3.5);

  \end{tikzpicture}

\caption{An example illustrating the elements contained in each set defined in this section when $n=8,m=2$. $\Omega_{n,m}$: all red nodes; $\partial \Omega_{n,m}$:  ``$\otimes$" + red ``$\times$"; $ \mathring{\Omega}_{n,m}$:  ``$\bullet$"; $\Omega^*_{n,m}$: all nodes; $\partial \Omega^*_{n,m}$: all ``$\times$", $ \mathring{\Omega}_{n,m}^{\ast}$: ``$\bullet$" + ``$\otimes$"; $\bar{\Omega}_{n,m}$: all nodes except blue ``$\times$"; $\Gamma_{n,m}(2)$: all red nodes on the thick black line; $\Gamma^*_{n,m}(2)$: all nodes on the thick black line.}
 \label{fig:sets}
\end{figure}


For the analysis of ODEs, it has been assumed in \eqref{assump: bd} the boundedness of the first- and second-order derivatives of the rigth-hand side. Correspondingly, our first step is to estimate the derivatives of the functional $F_1$ and $F_2$ given by \eqref{def: scheme 1 k1} and \eqref{def: scheme 1 k2}. For $F_1$, the results are given by the following two propositions for first- and second-order derivatives respectively.

\begin{proposition}
Assume (H1)(H3)(R4) hold. Given the time step length $h$ and any $\xib_{n,m}$ being a convex combination of exact solution $\gb^{\mathrm{e}}_{n,m}$ given by \eqref{gbe}, numerical solution of scheme \eqref{def: scheme 1} $\gb_{n,m}$ and numerical solution of inchworm Monte Carlo method \eqref{def: scheme 2}, the first-order derivative of $F_1(\xib_{n,m})$ defined by \eqref{def: scheme 1 k1} w.r.t. the $pq-$entry ($p,q=1,2$) of $G_{k,\ell}$ is bounded by
\begin{equation} \label{eq: 1st order bound}
\left\| \frac{\partial F_1 (\xib_{n,m})}{\partial G_{k,\ell}^{(pq)}} \right\| \le
\begin{cases}
   P_1(t_{n-m}) h, &\text{for~} (k,\ell) \in \partial \Omega_{n,m} , \\
   P_1(t_{n-m}) h^2, &\text{for~}(k,\ell) \in \mathring{\Omega}_{n,m}, 
\end{cases}
\end{equation}
where $P_1(t)$ is defined in \eqref{eq: P1}.
\label{thm: first order derivative}
\end{proposition}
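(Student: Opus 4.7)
My plan is to exploit the linearity of the interpolation operator $I_h$ in the nodal values $G_{\mu,\nu}$, compute $\partial F_1/\partial G_{k,\ell}^{(pq)}$ explicitly as a sum over occurrences of the basis function $\phi_{k,\ell}$, and then extract the $h$ and $h^2$ scalings from the support of $\phi_{k,\ell}$ combined with volume bounds on the time-ordered simplex.

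Writing $I_h G(u,v) = \sum_{(\mu,\nu)} \phi_{\mu,\nu}(u,v) G_{\mu,\nu}$ with $\phi_{\mu,\nu}$ the standard tent basis on the triangular mesh of Figure \ref{fig:mesh and order} (respecting (R4) at $s = t$), each summand of $F_1$ indexed by odd $M \le \bar{M}$ becomes multilinear in the $G_{\mu,\nu}$'s. By the product rule applied to \eqref{def: scheme 1 k1}, $\partial F_1/\partial G_{k,\ell}^{(pq)}$ is a sum of $M+1$ terms---one for each interpolation factor $I_h G(u_{i+1},u_i)$ in the $M$-th summand (writing $u_0 = t_m$, $u_{M+1} = t_n$, and $u_i = s_i^M$ otherwise)---each replacing the corresponding factor by $\phi_{k,\ell}(u_{i+1},u_i)\,E^{(pq)}$. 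I then bound each integrand using $\|W_s\| \le \bdW$, $\|\xib_{n,m}\| \le \bdG$ for any convex combination (from (H1) and the assumption that $\|\ge_{n,m}\|,\|\gb_{n,m}\|,\|\tg_{n,m}\| \le \bdG$), and $|\Ls| \le M!!\,\bdL^{(M+1)/2}$ (from (H3)), so the main task reduces to controlling $\int |\phi_{k,\ell}(u_{i+1},u_i)|$ together with a time-simplex volume.

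The crucial dichotomy comes from the support of $\phi_{k,\ell}$, which lies in an $O(h)$-neighborhood of $(t_k, t_\ell)$. For middle positions $1 \le i \le M-1$, both $u_i$ and $u_{i+1}$ are free integration variables, so the restricted $(s_i, s_{i+1})$-integration yields $O(h^2)$, while the remaining $M-2$ ordered variables---split into sub-simplices of dimensions $i-1$ below $t_\ell$ and $M-i-1$ above $t_k$---contribute $(t_\ell - t_m)^{i-1}(t_n - t_k)^{M-i-1}/[(i-1)!(M-i-1)!]$, summable via the binomial identity to $t_{n-m}^{M-2}/(M-2)!$. For boundary positions $i \in \{0, M\}$, one coordinate of $\phi_{k,\ell}$ is pinned to $t_m$ or $t_n$; since the 2D tent vanishes along any mesh line not passing through $(t_k,t_\ell)$, such sub-terms are zero unless $\ell = m$ (for $i = 0$) or $k = n$ (for $i = M$), matching exactly the set $\partial\Omega_{n,m}$. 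In those cases, $\phi_{k,\ell}$ reduces to a 1D tent whose integral over the remaining coordinate contributes $O(h)$, and the remaining $M-1$ ordered variables fill a simplex of volume $\le t_{n-m}^{M-1}/(M-1)!$. Middle-position sub-terms with $i < M$ and $k = n$ force $s_{i+2},\ldots,s_M$ into an $O(h)$-interval, so they are of order $h^3$ or higher and subdominant.

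Assembling these estimates, the interior case collects only middle-position contributions of order $h^2 \cdot t^{M-2}$, while the boundary case additionally receives the leading $O(h \cdot t^{M-1})$ piece. Summing over $M$ with the identities $M!!/(M-1)! = M/(M-1)!!$ and $M!!/(M-2)! = M/(M-3)!!$ (for odd $M$), one recovers precisely the form of $P_1(t)$ in \eqref{eq: P1}: the leading $2\bdW^2 \bdG \bdL$ comes from the unique $M=1$ boundary term, and the $(1+t)$ factor in the series absorbs both the boundary polynomial $t^{M-1}$ and the interior polynomial $t^{M-2}$ for $M \ge 3$ after extracting a common $h$. I expect the main obstacle to be the combinatorial bookkeeping---tracking $M!!$, simplex factorials, the sum over $i$, and verifying the numerical constants---together with handling rule (R4) near $s=t$: the basis there must be interpreted as one-sided limits, but since $\phi_{k,\ell}$ remains supported in an $O(h)$ region on each side of the jump, the support-based estimates carry through unchanged.
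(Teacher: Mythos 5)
Your proposal is correct and rests on the same two pillars as the paper's own proof: the locality of the piecewise-linear interpolation (a factor $I_h\Xi(s_{i+1}^M,s_i^M)$ depends on $G_{k,\ell}$ only when its arguments lie within $O(h)$ of $(t_k,t_\ell)$) and ordered-simplex volume estimates, summed over $M$ against the bound $|\Ls|\le M!!\,\bdL^{(M+1)/2}$. The bookkeeping is organized differently, though, in a way that buys a real simplification. The paper first partitions the simplex according to where the relevant grid times fall among the $s_j^M$ (the $\Is_j$ and $\Js_j$ decompositions, and for the overlapping case $k=\ell+1$ the four families $\Ks^{u,v}_1,\Ks^{u,v}_{2,L},\Ks^{u,v}_{2,R},\Ks^{u,v}_3$) and only then identifies which factors depend on $\Xi_{k,\ell}$ on each piece; you differentiate the multilinear integrand factor by factor and localize each of the $M+1$ resulting terms via the support of the tent function $\phi_{k,\ell}$. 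Your observation that $\phi_{k,\ell}$ vanishes identically on any mesh line not passing through $(t_k,t_\ell)$ gives a one-line characterization of the $\partial\Omega_{n,m}$ versus $\mathring{\Omega}_{n,m}$ dichotomy (the pinned factors $I_hG(t_n,\cdot)$ and $I_hG(\cdot,t_m)$ survive exactly when $k=n$ or $\ell=m$), and the uniform per-factor bound --- support measure in $(s_i^M,s_{i+1}^M)$ times the volumes of the two residual sub-simplices --- makes the separate $k-\ell=1$ analysis unnecessary. Two small points. First, your claim that the middle-position terms with $k=n$ are "of order $h^3$ or higher" is off for $i=M-1$, where the term is $O(h^2 t^{M-2})$; this is still subdominant to the $O(h\,t^{M-1})$ boundary term, so nothing breaks, but the statement as written is wrong. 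Second, as you anticipate, closing the argument requires verifying that the summed constants ($M!!/(M-1)!=M/(M-1)!!$ for the boundary piece and $M!!/(M-2)!=M/(M-3)!!$ for the interior piece, both correct) sit below the coefficients built into $P_1$ in \eqref{eq: P1}; they do, since $(M-3)!!\le(M-1)!!$ and $P_1$ carries the generous extra factors $(M-1)$, $2^{M-2}$ and $(1+t)$.
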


\begin{proposition}
Assume (H1)(H3)(R4) hold. Given the time step length $h$, the second-order derivative of $F_1(\xib_{n,m})$ defined in \eqref{def: scheme 1 k1} w.r.t. the $p_1 q_1-$entry of $G_{k_1,\ell_1}$ and the $p_2 q_2-$entry of $G_{k_2,\ell_2}$ ($p_i,q_i = 1,2$) is bounded by:

\begin{itemize}
\item If $(k_1,\ell_1) \times (k_2,\ell_2) \in \partial \Omega_{n,m} \times \partial \Omega_{n,m}$,
\begin{equation}\label{eq:2ed deriv bdxbd}
 \begin{split}
\left\| \frac{\partial^2 F_1 (\xib_{n,m})}{\partial G_{k_1,\ell_1}^{(p_1 q_1)}\partial G_{k_2,\ell_2}^{(p_2 q_2)}} \right\| \le 
\begin{cases}
 P_2(t_{n-m}) h, &\text{if one of the conditions \textbf{(a)}-\textbf{(d)} holds}, \\
  P_2(t_{n-m}) h^2, &\text{otherwise},
\end{cases}
   \end{split}
\end{equation}
where the conditions \textbf{(a)}-\textbf{(d)} are given by 
\begin{align*}
    &\textbf{(a)} \ k_1 = k_2 = n, \ (\ell_1,\ell_2) \in \{(n-1,m),(m,n-1)\}, \\
    &\textbf{(b)} \ \ell_1 = \ell_2 = m, \ (k_1,k_2) \in \{(m+1,n),(n,m+1)\}, \\
     &\textbf{(c)} \ k_1 = n \text{~and~} \ell_2 = m, \  (k_2,\ell_1) \in \big\{ m \le \ell_1 \le n-1,m+1 \le k_2 \le n \, \big\vert \, |\ell_1 - k_2|\le 1 \big\}, \\
    &\textbf{(d)} \ k_2 = n \text{~and~} \ell_1 = m, \  (k_1,\ell_2) \in \big\{ m \le \ell_2 \le n-1,m+1 \le k_1 \le n  \, \big\vert \, |\ell_2 - k_1|\le 1 \big\} .
\end{align*}

\item If $(k_1,\ell_1)\times (k_2,\ell_2) \in \partial \Omega_{n,m} \times  \mathring{\Omega}_{n,m}$, 
\begin{equation}\label{eq:2ed deriv bdxint}
 \begin{split}
\left\| \frac{\partial^2 F_1 (\xib_{n,m})}{\partial G_{k_1,\ell_1}^{(p_1 q_1)}\partial G_{k_2,\ell_2}^{(p_2 q_2)}} \right\| \le 
\begin{cases}
 P_2(t_{n-m}) h^2, &\text{for~}k_1 =n,|k_2-\ell_1|\le 1\text{~or~}\ell_1=m,|k_1 -\ell_2|\le 1 , \\
  P_2(t_{n-m}) h^3, &\text{otherwise}. 
\end{cases}
   \end{split}
\end{equation}

\item If $(k_1,\ell_1)\times (k_2,\ell_2) \in   \mathring{\Omega}_{n,m} \times \partial \Omega_{n,m}$, 
\begin{equation}\label{eq:2ed deriv bdxint 2}
 \begin{split}
\left\| \frac{\partial^2 F_1 (\xib_{n,m})}{\partial G_{k_1,\ell_1}^{(p_1 q_1)}\partial G_{k_2,\ell_2}^{(p_2 q_2)}} \right\| \le 
\begin{cases}
 P_2(t_{n-m}) h^2, &\text{for~}k_2 =n,|k_1-\ell_2|\le 1\text{~or~}\ell_2=m,|k_2 -\ell_1|\le 1 , \\
  P_2(t_{n-m}) h^3, &\text{otherwise}. 
\end{cases}
   \end{split}
\end{equation}

\item If $(k_1,\ell_1)\times(k_2,\ell_2) \in  \mathring{\Omega}_{n,m} \times  \mathring{\Omega}_{n,m} $,
\begin{equation}\label{eq:2ed deriv intxint}
  \begin{split}
\left\| \frac{\partial^2 F_1 (\xib_{n,m})}{\partial G_{k_1,\ell_1}^{(p_1 q_1)}\partial G_{k_2,\ell_2}^{(p_2 q_2)}} \right\| \le 
\begin{cases}
 P_2(t_{n-m}) h^3, &\text{for~}|k_1-\ell_2|\le1\text{~or~}|k_2-\ell_1|\le 1 , \\
 P_2(t_{n-m}) h^4, &\text{otherwise},
\end{cases}
  \end{split}
\end{equation}
\end{itemize}
\label{thm: second order derivative} 
where $P_2(t)$ is a polynomial of degree $\bar{M}-1$ independent of $h$.
\end{proposition}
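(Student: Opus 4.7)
The plan is to reduce the second derivative of $F_1$ to a finite sum of integrals whose integrand carries exactly two hat functions arising from the linear interpolation operator $I_h$, and then to read off the power of $h$ in each case by analysing how the supports of these hat functions sit inside the simplex $\{t_n>s_M^M>\cdots>s_1^M>t_m\}$ and against its boundary $\{s_M^M=t_n\}\cup\{s_1^M=t_m\}$.

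First I would expand $I_h G(x,y)=\sum_{(k,\ell)}G_{k,\ell}\,\phi_{k,\ell}(x,y)$, where $\phi_{k,\ell}$ is the standard piecewise-linear nodal basis function attached to $(t_k,t_\ell)$ on the triangular mesh of Figure~\ref{fig:mesh and order}. Because $I_h G$ is linear in each nodal value, $\partial^2 I_h G/\partial G_{k_1,\ell_1}^{(p_1q_1)}\partial G_{k_2,\ell_2}^{(p_2q_2)}\equiv 0$. Applying the product rule to the $(M{+}1)$-factor chain $I_h G(t_n,s_M^M)W_s\cdots W_s I_h G(s_1^M,t_m)$ inside \eqref{def: scheme 1 k1} therefore retains only those terms in which two \emph{distinct} factors are differentiated: one is replaced by $\phi_{k_1,\ell_1}E_{p_1q_1}$ and a different one by $\phi_{k_2,\ell_2}E_{p_2q_2}$. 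The remaining $M-1$ factors are bounded by $\bdG$ each, $W_s$ by $\bdW$, and $\Ls(t_n,\vec{\sb}^M)$ by $M!!\,\bdL^{(M+1)/2}$ via (H3), so the whole task reduces to estimating the $M$-dimensional integral of the product of the two hat functions over the simplex.

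The two-dimensional hat-function estimates, which are the natural analogues of those used in Proposition~\ref{thm: first order derivative}, are $\|\phi_{k,\ell}\|_\infty\le 1$ and $\iint\phi_{k,\ell}(x,y)\,\mathrm{d}x\,\mathrm{d}y=O(h^2)$; at an interior node this $O(h^2)$ factorises as $O(h)$ in each variable, while at a boundary node with $k=n$ the support is confined to $x\in[t_{n-1},t_n]$, so pinning $x$ to the outer endpoint $t_n$ (the case when the differentiated factor is $I_h G(t_n,s_M^M)$) costs only $O(1)$ and leaves $O(h)$ in $y$; the mirror statement holds at $\ell=m$. Setting $s_0^M:=t_m$ and $s_{M+1}^M:=t_n$ and indexing the factors of the chain by the pair $(s_{j+1}^M,s_j^M)$, I would then enumerate the unordered pairs $\{j_1,j_2\}$ of differentiated factors and, for each pair, count the power of $h$ from (i) the integration variables absorbed by the two hat functions (three variables if $|j_1-j_2|=1$, four if $|j_1-j_2|\ge 2$), (ii) whether any absorbed variable is pinned to an endpoint and so contributes $O(1)$ rather than $O(h)$, and (iii) the residual simplex integral over the remaining variables, contributing $O(t_{n-m}^{M-\#\{\text{absorbed}\}}/(M-\#\{\text{absorbed}\})!)$. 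Collecting the $M$-dependent constants, the $M!!$ from $\Ls$ combined with the factorial denominator produces a series $\sum_M(\cdots)/(M-1)!!$ which, truncated at $\bar M$, gives a polynomial $P_2(t_{n-m})$ of degree $\bar M-1$.

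The main obstacle is purely the bookkeeping: one has to verify that the conditions (a)--(d) in \eqref{eq:2ed deriv bdxbd} exhaust exactly the boundary--boundary configurations in which \emph{both} hat functions can pin integration variables to fixed endpoints simultaneously--either both factors sit at the same end of the chain (conditions (a) and (b), losing two $h$'s at that end), or one sits at each end with the adjacency constraints $|k_2-\ell_1|\le 1$ or $|k_1-\ell_2|\le 1$ (conditions (c) and (d)) forcing the two differentiated factors to be adjacent so that a shared interior variable can also be pinned. The analogous adjacency constraints in \eqref{eq:2ed deriv bdxint}--\eqref{eq:2ed deriv intxint} are identified in the same way: a shared pinned interior variable saves an extra $O(h)$ relative to the generic case, dropping the total power of $h$ by one. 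Once this geometric sorting is finished, each of the bounds \eqref{eq:2ed deriv bdxbd}--\eqref{eq:2ed deriv intxint} follows mechanically by multiplying together the hat-function estimates, the $\bdG$ and $\bdW$ bounds from (H3), and the residual simplex volume, in direct parallel with the first-order argument already carried out in Proposition~\ref{thm: first order derivative}.
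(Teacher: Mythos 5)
Your proposal is correct and follows essentially the same route as the paper: the observation that linearity of $I_h$ kills single-factor second derivatives and forces two \emph{distinct} differentiated factors is precisely the paper's Lemma~\ref{lemma: 2nd order}, and your subsequent counting of integration variables confined to $O(h)$ intervals---with savings from endpoint-pinned arguments and from adjacency of the two differentiated factors---is exactly the paper's case-by-case analysis. The only imprecision is your gloss on conditions \textbf{(a)}/\textbf{(b)}: the $O(h)$ bound there arises not from two factors sitting at the same end of the chain (that configuration still restricts two interior variables and gives $O(h^2)$) but from the $M=1$ term, where the two factors $I_h G(t_n,s_1^1)$ and $I_h G(s_1^1,t_m)$ are adjacent with both outer arguments pinned, so only the single shared variable $s_1^1$ is restricted; your bookkeeping rules (i)--(iii) would nonetheless classify this case correctly.
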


In these two propositions, the functions $P_1(\cdot)$ and $P_2(\cdot)$ are the same as the corresponding functions in Theorems \ref{thm: bounds} and \ref{thm: final estimates}, respectively. The proofs of these two propositions are quite technical and tedious and thus we move them to \ref{sec: derivatives}. Unlike the case of differential equations, the partial derivative of $F_1(\cdot)$ involves a number of previous numerical solutions (all red nodes in Figure \ref{fig:sets}), and the magnitudes depend on the locations of the nodes, as forms different cases in the above propositions. Similar results for the derivatives of $F_2(\cdot)$ with the same functions $P_1(t),P_2(t)$ can be proven, where all the indices $n$ should be changed to $n+1$.

With the above estimates for the derivatives, we can establish recurrence relations for the bias and the numerical error:
\begin{proposition}
\label{thm: recurrence relations}
Let $\Delta \gb_{n,m}= \tg_{n,m} - \gb_{n,m}$ and $\Delta \gb^*_{n,m}= \tg^*_{n,m} - \gb^*_{n,m}$. Given a sufficiently small time step length $h$ and a sufficiently large $N_s$, if the assumptions (H1) and (H3) hold, the difference between the deterministic solution and the Monte Carlo solution can be estimated by
\begin{itemize}
\item Bias estimation:
\begin{equation}\label{eq: recurrence 1}
  \begin{split}
&\|\E(\dG_{n+1,m})\|\\
\le&  \   (1+\frac{1}{8}\bdH^4 h^4)\|\E(\dG_{n,m})\| 
+ 22P_1(t_{n-m+1}) h^2  \sum_{i = 1}^{n-m} \left( 2 + (n-m+1-i)h  \right)  \left\| \E ( \Delta \gb^*_{n,m}) \right\|_{\Gamma^*_{n,m}(i)}\\
& + \left(\frac{7}{2}\bar{\alpha}(t_{n-m+1})h\left[ \Ns^{(\emph{std})}_{\bar{\Omega}_{n,m}}(\Delta \gb^*_{n,m}) \right]^2 + 8 \bar{\alpha}(t_{n-m+1})\bar{\gamma}(t_{n-m+1})\cdot \frac{h^3}{N_s} \right). 
  \end{split}
\end{equation}

\item Numerical error estimation: 
\begin{equation}\label{eq: recurrence 2 1/2}
    \begin{split}
     &[\E(\|\dG_{n+1,m}\|^2)]^{1/2}   \le   (1+\frac{1}{8}\bdH^4 h^4)[\E(\|\dG_{n,m}\|^2)]^{1/2} \\
&\hspace{10pt}+ 22 P_1(t_{n-m+1}) h^2  \sum_{i = 1}^{n-m} \big( 2 + (n-m+1-i)h  \big) \Ns^{(\emph{std})}_{\Gamma^*_{n,m}(i)}(\Delta \gb^*_{n,m}) + \frac{7}{2}  \sqrt{\bar{\gamma}(t_{n-m+1})}\cdot \frac{h}{\sqrt{N_s}},
    \end{split}
\end{equation}
and   
\begin{equation}\label{eq: recurrence 2}
    \begin{split}
   & \E(\|\dG_{n+1,m}\|^2)
   \le  ( 1 + \frac{1}{4}\bdH^4 h^4 )\cdot\E(\|\dG_{n,m}\|^2) + (1+ \frac{1}{8}\bdH^4 h^4)h \left[\E(\|\dG_{n,m}\|^2)\right]^{1/2} \times\\ 
     & \hspace{-20pt} \left\{  44 P_1(t_{n-m+1}) h  \sum_{i = 1}^{n-m} \big( 2 + (n-m+1-i)h  \big) \Ns^{(\emph{std})}_{\Gamma^*_{n,m}(i)}(\Delta \gb^*_{n,m})  +  4 \bar{\alpha}(t_{n-m+1})\bar{\gamma}(t_{n-m+1})\cdot \frac{h^2}{N_s} \right\}\\
     & \hspace{-10pt}+912 P_1^2(t_{n-m+1}) h^4 \left\{  \sum_{i = 1}^{n-m} \big( 2 + (n-m+1-i)h  \big) \Ns^{(\emph{std})}_{\Gamma^*_{n,m}(i)}(\Delta \gb^*_{n,m})  \right\}^2+ 17\bar{\gamma}(t_{n-m+1})\cdot \frac{h^2}{N_s},
   \end{split}
\end{equation}
\end{itemize}
where the functions $\bar{\alpha}$ and $\bar{\gamma}$ are defined in \eqref{eq: alpha gamma}.
\end{proposition}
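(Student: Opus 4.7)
Subtracting the deterministic scheme~\eqref{def: scheme 1} from the stochastic scheme~\eqref{def: scheme 2}, and substituting the predictor identity $G^{\ast}_{n+1,m} = (I+\sgn(t_n-t)\ii H_s h)G_{n,m} + K_1 h$ (and its tilde version) into the corrector, I obtain the one-step decomposition
\begin{equation*}
\dG_{n+1,m} \;=\; U_h\,\dG_{n,m} \;+\; \tfrac{h}{2}\bigl[(I+\sgn(t_n-t)\ii H_s h)(\tK_1-K_1) + (\tK_2-K_2)\bigr],
\end{equation*}
where, on a smooth interval, $U_h = I + \sgn(t_n-t)\ii H_s h - \tfrac{1}{2}H_s^2 h^2$ (with an analogous mixed-sign variant across the jump at $t_n=t$). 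Since $H_s$ is Hermitian, every eigenvalue of $U_h$ has modulus $\sqrt{1+\tfrac{h^4}{4}\lambda^4}$, so $\|U_h\|_2 \le \sqrt{1+\tfrac{1}{4}\bdH^4 h^4} \le 1+\tfrac{1}{8}\bdH^4 h^4$, which explains the amplification factor appearing on $\|\E(\dG_{n,m})\|$ and on $\E(\|\dG_{n,m}\|^2)^{1/2}$ in all three recurrences.

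The core of the argument is to bound $\tK_j - K_j$. I would split
\begin{equation*}
\tK_1 - K_1 \;=\; \underbrace{\tfrac{1}{N_s}\sum_{i=1}^{N_s}\bigl[\widetilde{F}_1(\tg_{n,m};\vec{\sb}^i) - F_1(\tg_{n,m})\bigr]}_{E_n^{\mathrm{mc}}} \;+\; \underbrace{F_1(\tg_{n,m}) - F_1(\gb_{n,m})}_{E_n^{\mathrm{det}}},
\end{equation*}
and analogously for $\tK_2-K_2$. Because the samples $\{\vec{\sb}^i\}$ are drawn independently of $\tg_{n,m}$, one has $\E[E_n^{\mathrm{mc}}\mid\tg_{n,m}]=0$, while (H1) together with the $\Ls$-bound in (H3) controls the single-sample second moment of $\widetilde{F}_1$ by $\bar{\gamma}(t_{n-m+1})$; dividing by $N_s$ produces every $\bar{\gamma}/N_s$ contribution. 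For $E_n^{\mathrm{det}}$ I apply a second-order Taylor expansion around $\gb_{n,m}$:
\begin{equation*}
E_n^{\mathrm{det}} = \sum_{(k,\ell)\in\Omega_{n,m}}\sum_{p,q}\frac{\partial F_1}{\partial G^{(pq)}_{k,\ell}}(\gb_{n,m})\,\Delta G^{(pq)}_{k,\ell} \;+\; \tfrac{1}{2}(\vec{\Delta\gb_{n,m}})^{\TT}\nabla^2 F_1(\xi)\,(\vec{\Delta\gb_{n,m}}).
\end{equation*}
The first-order bounds of Proposition~\ref{thm: first order derivative} contribute $h$ on $\partial\Omega_{n,m}$ and $h^2$ on $\mathring{\Omega}_{n,m}$; reorganizing the sum by the diagonals $\Gamma^{\ast}_{n,m}(i)$ (each containing at most two boundary nodes and $n-m-i$ interior nodes) and inserting the seminorm $\|\cdot\|_{\Gamma^{\ast}_{n,m}(i)}$ yields precisely the $P_1(t_{n-m+1})\,h^2\sum_i(2+(n-m+1-i)h)\|\cdot\|_{\Gamma^{\ast}_{n,m}(i)}$ structure appearing in \eqref{eq: recurrence 1}--\eqref{eq: recurrence 2 1/2}.

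The quadratic Taylor remainder is controlled by Proposition~\ref{thm: second order derivative}: I bound $|(\Delta\gb)^{\TT}\nabla^2 F_1 \Delta\gb|$ by a double sum of products $\|\Delta G_{k_1,\ell_1}\|\,\|\Delta G_{k_2,\ell_2}\|$, use $ab\le\tfrac12(a^2+b^2)$ inside $\E$, and then regroup according to the four regimes (boundary/boundary, boundary/interior, interior/boundary, interior/interior) of the proposition; the total number of index pairs in each regime, weighted by the corresponding $h$-power, gives exactly the polynomial $10t+16t^2+5t^3+\tfrac14 t^4$ absorbed into $\bar\alpha$, and hence the $\bar\alpha\,h\,[\Ns^{(\std)}_{\bar\Omega_{n,m}}(\Delta\gb^{\ast})]^2$ term in \eqref{eq: recurrence 1}. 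For the $L^2$ recurrence \eqref{eq: recurrence 2}, I expand $\|\dG_{n+1,m}\|^2$, use conditional orthogonality $\E\langle U_h\dG_{n,m}+E_n^{\mathrm{det}},\,E_n^{\mathrm{mc}}\rangle = 0$ to kill the MC cross terms, and apply Cauchy--Schwarz on the remaining deterministic cross terms; squaring the first-order Taylor sum produces the $(\sum_i\cdots)^2$ factor, while the variance of $E_n^{\mathrm{mc}}$ against the chain yields the $\bar\gamma\,h^2/N_s$ term and its interaction with the quadratic remainder produces $\bar\alpha\bar\gamma h^2/N_s$.

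The main obstacle is the combinatorial bookkeeping in the quadratic step: Proposition~\ref{thm: second order derivative} already distinguishes four regimes refined by the adjacency sub-cases \textbf{(a)}--\textbf{(d)}, and obtaining the precise numerical prefactors $22$, $\tfrac{7}{2}$, $8$, $17$ and $912$ requires exact counts of index pairs on each diagonal together with clean telescoping of the $h$-series in $n-m$. A secondary subtlety is that the near-unitarity bound on $U_h$ relies crucially on $H_s = H_s^{\ast}$; across the jump $t_n=t$ one of the signs in the predictor/corrector flips, but a direct substitution shows the modified propagator differs from $U_h$ only by terms that still admit the $1+\tfrac18\bdH^4 h^4$ estimate. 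Once the decomposition above is set, the remaining steps---taking $\E$ (resp.\ $\E\|\cdot\|^2$), applying (H1)(H3) uniformly on the convex interpolant $\xi$, and invoking the derivative bounds---are a direct but careful accumulation.
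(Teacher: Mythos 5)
Your proposal follows essentially the same route as the paper: the same one-step decomposition with the near-unitary propagation matrix bounded by $1+\tfrac18\bdH^4h^4$, the same splitting of $\Delta K_i$ into a conditionally mean-zero Monte Carlo part and a deterministic Taylor-expanded part controlled by Propositions~\ref{thm: first order derivative} and~\ref{thm: second order derivative}, and the same refined treatment of the cross term to avoid the sub-optimal Cauchy--Schwarz bound. The only piece you describe implicitly rather than explicitly is the paper's auxiliary ``semi-stochastic'' stage values $\bK_1,\bK_2$ (Lemma~\ref{thm: ktilde vs kbar}), which formalize your observation that the stage-1 sampling noise entering $\tK_2$ through $\tG^*_{n+1,m}$ survives only through the quadratic remainder and hence contributes the $\bar\alpha\bar\gamma\,h^2/N_s$ term.
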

In the above proposition, two different recurrence relations are given for the numerical error. Note that \eqref{eq: recurrence 2} is not a simple square of the estimation \eqref{eq: recurrence 2 1/2}. The main reason lies in the term $4 \bar{\alpha}(t_{n-m+1})\bar{\gamma}(t_{n-m+1})\cdot h^2/N_s$ located at the end of the second line in \eqref{eq: recurrence 2}. Below we are going to use a simple analog to help the readers understand the difference. Consider the two recurrence relations
\begin{align} \label{eq: analog1}
e_{n+1} &\le e_n + \frac{h}{\sqrt{N_s}}, \\
\label{eq: analog2}
e_{n+1}^2 &\le e_n^2 + \frac{2h^2}{N_s} e_n + \frac{h^2}{N_s}.
\end{align}
The relation between these two recurrence relations are analogous to the relation between \eqref{eq: recurrence 2 1/2} and \eqref{eq: recurrence 2}. The square of the first recurrence relation \eqref{eq: analog1} is
\begin{displaymath}
e_{n+1}^2 \le e_n^2 + \frac{2h}{\sqrt{N_s}} e_n + \frac{h^2}{N_s},
\end{displaymath}
where the cross-term is different from \eqref{eq: analog2}. However, the relation \eqref{eq: analog2} provides a higher numerical order than \eqref{eq: analog1}, since by Cauchy-Schwarz inequality, we can derive from \eqref{eq: analog2} that
\begin{equation} \label{eq: cs}
e_{n+1}^2 \le \left(1 + \frac{h^2}{N_s}\right) e_n^2 + \frac{2h^2}{N_s},
\end{equation}
indicating that $e_n \sim O(\sqrt{h/N_s})$, while the recurrence relation \eqref{eq: analog1} can only give us $e_n \sim O(\sqrt{1/N_s})$. Besides, in order to study the error growth rate with respect to time, we are not allowed to use the Cauchy-Schwarz inequality to simplify equation \eqref{eq: recurrence 2} like \eqref{eq: cs}. Later in our proof, the simpler equation \eqref{eq: recurrence 2 1/2} will be used the determine the growth rate of the numerical error, while the more complicated version \eqref{eq: recurrence 2} is responsible for the final error estimation. Theorem \ref{thm: bounds} is obtained from the recurrence relations stated in Proposition \ref{thm: recurrence relations}.

To obtain the final estimates (Theorem \ref{thm: final estimates}), we need to estimate the error of the deterministic scheme \eqref{def: scheme 1}, which is given by
\begin{proposition}\label{thm: Runge Kutta error}
We define the deterministic error $E_{n+1,m} = \Ge(t_{n+1}, t_m) - G_{n+1,m}$. If the assumptions (H1)(H2)(H3) hold, then for a sufficiently small time step length $h$ and a sufficiently large number of samples at each step $N_s$, we have
\begin{equation}\label{eq: Runge Kutta error estimate global}
    \|E_{n+1,m}\| \le  P^{\emph{e}}(t_{n-m+1})\left(\ee^{\theta_1 \sqrt{P_1(t_{n-m+1})} t_{n-m+1}}\right)\cdot h^2
\end{equation}
where $P^{\emph{e}}(t)$ is defined in \eqref{eq: Ce}, and the constant $\theta_1$ is the same as the one in Theorem \ref{thm: final estimates}.
\end{proposition}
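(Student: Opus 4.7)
The plan is to proceed along the same lines as the proof of Theorem \ref{thm: bounds}, with the stochastic noise replaced by the local truncation error of the underlying Heun-type scheme. Concretely, I would split the analysis into a local consistency step and a global propagation step, and close the argument with a discrete Gronwall estimate that respects the non-local integral structure of \eqref{eq: inchworm equation}.

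\textbf{Step 1: Local truncation error.} First I would define the local truncation error $\tau_{n+1,m}$ by plugging the exact solution $\Ge(\cdot,\cdot)$ into the numerical scheme \eqref{def: scheme 1} and subtracting. There are three sources of error to control. (i) The Heun update for the linear part $\sgn(\cdot - t)\ii H_s\Ge$ produces an $O(h^3)$ Taylor remainder, which under (H2)(H3) is bounded by a constant times $(\bdH\bdG'' + \bdG''')h^3$. (ii) The Heun update for the nonlinear functional $\Hs$ produces an $O(h^3)$ Taylor remainder, contributing the term proportional to the series in $P^{\text{e}}$ through differentiation of $\Hs$ with respect to $\sa$ (each derivative hits either a boundary $\Ge$-value or an interior factor, producing at most one extra $M$-dependent prefactor). (iii) When one replaces $\Ge$ inside the integrand of $F_1,F_2$ by the piecewise linear interpolant $I_h\Ge$ on the triangular mesh of Figure~\ref{fig:mesh and order}, a pointwise interpolation error of size $\bdG'' h^2$ is introduced, which after integration against $W_s\Ls$ and summation over $M$ contributes the $\bdW\bdG''\bdL^{1/2}$-series appearing in \eqref{eq: Ce}. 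Adding these three pieces gives $\|\tau_{n+1,m}\| \le P^{\text{e}}(t_{n-m+1})\,h^3$. The jump conditions at $s=t$ are handled by the rules (R1)--(R4), so the Taylor expansions are taken one-sided on each side of the discontinuity, and (H2) guarantees sufficient one-sided regularity.

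\textbf{Step 2: Recurrence for the global error.} Subtracting the numerical scheme from the perturbed scheme and writing the nonlinear updates via the mean value theorem, I obtain
\begin{equation*}
E_{n+1,m} = (I + \tfrac{1}{2}\sgn(t_n-t)\ii H_s h) E_{n,m} + \tfrac{1}{2}\sgn(t_{n+1}-t)\ii H_s h\, E^*_{n+1,m} + \tfrac{h}{2}\bigl(\Delta F_1 + \Delta F_2\bigr) + \tau_{n+1,m},
\end{equation*}
where $\Delta F_1,\Delta F_2$ are the differences of $F_1,F_2$ evaluated at $\ge_{n,m},\ge^*_{n,m}$ versus $\gb_{n,m},\gb^*_{n,m}$. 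Expanding $\Delta F_1,\Delta F_2$ by the mean value theorem and inserting the first-order derivative bounds from Proposition~\ref{thm: first order derivative} (and its $F_2$ counterpart) produces a recurrence of exactly the same shape as \eqref{eq: recurrence 2 1/2}, but with the $\sqrt{h/N_s}$ noise term replaced by $P^{\text{e}}(t_{n-m+1})h^3$ coming from $\tau_{n+1,m}$, and with the partition into boundary vs.\ interior indices giving the weight $(2 + (n-m+1-i)h)$ in front of the error on each diagonal $\Gamma_{n,m}^*(i)$. The $(1+\tfrac{1}{8}\bdH^4 h^4)$ amplification factor for the ``local'' part comes from combining the two half-step $I\pm\tfrac{1}{2}\sgn\ii H_s h$ operators and absorbing cross terms.

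\textbf{Step 3: Discrete Gronwall along diagonals.} I would then induct on the diagonal index $i = n-m+1$, which is the natural parameter of the problem because $F_1,F_2$ at level $(n+1,m)$ depend only on indices inside $\bar\Omega_{n,m}$, all of which lie on diagonals with smaller index. Defining $e_i(t) = \max_{n-m+1 = i}\|E_{n,m}\|$ and bounding the sum $\sum_{i=1}^{n-m}(2+(n-m+1-i)h)\,\|E\|_{\Gamma^*_{n,m}(i)}$ by a Cauchy--Schwarz style estimate in which the $P_1(t)$ prefactor is split as $\sqrt{P_1(t)}\cdot\sqrt{P_1(t)}$, I obtain the $\exp(\theta_1 \sqrt{P_1(t)}\,t)$ growth. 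This is the same mechanism that produced the identical exponential in Theorem~\ref{thm: bounds}, and the constant $\theta_1 = 353$ is inherited by tracking the same numerical factors. The final combination with $\|\tau\| \lesssim P^{\text{e}}\,h^3$ and multiplying through by $1/h$ gives the claimed $h^2$ bound.

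\textbf{Main obstacle.} The delicate part is the Gronwall step, because the error on the diagonal $i$ is fed by errors on all earlier diagonals, weighted by the boundary/interior scaling in Proposition~\ref{thm: first order derivative}. One has to show that this history-dependent recursion produces the exponent $\theta_1\sqrt{P_1(t)}\,t$ rather than a worse $P_1(t)\,t$ growth; the square root is gained only by the Cauchy--Schwarz split and by exploiting the $h^2$ weight on interior indices (as opposed to the $h$ weight on boundary indices). Verifying that this exact bookkeeping survives the presence of two half-step updates and the jump treatment near $s=t$ is the technical core; once the recurrence is written in the same form as in Proposition~\ref{thm: recurrence relations}, the discrete Gronwall step mirrors the one used to derive \eqref{2ed error upper bound} from \eqref{eq: recurrence 2 1/2}.
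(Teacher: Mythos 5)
Your proposal follows essentially the same route as the paper: split $\|E_{n+1,m}\|$ into a local consistency error (Taylor remainder of the Heun update plus the $O(\bdG'' h^2)$ piecewise-linear interpolation error inside $F_1,F_2$, summing to $P^{\text{e}}(t)h^3$) and a propagation part controlled via the mean value theorem and the first-order derivative bounds of Proposition~\ref{thm: first order derivative}, yielding a recurrence of the same shape as \eqref{eq: recurrence 2 1/2} with the noise term replaced by $P^{\text{e}}h^3$, and then closing with the same diagonal-wise induction. The only slight imprecision is attributing the $\sqrt{P_1}$ in the exponent to a Cauchy--Schwarz split of the prefactor; in the paper it emerges from summing the geometric history term under the ansatz $(1+\theta_1\sqrt{P_1}h)^i$ (equivalently, from the roots of the characteristic polynomial of the linear recurrence), which divides the $P_1 h^2$ coefficient by $\theta_1\sqrt{P_1}h$ --- but this does not affect the validity of the argument.
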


It is easy to see that our final conclusions in Theorem \ref{thm: final estimates} are a straightforward combination of Theorem \ref{thm: bounds} and Proposition \ref{thm: Runge Kutta error} by the triangle inequality.

\section{Numerical experiments}\label{sec: numer exp}
In this section, we will verify the above statements using numerical experiments. The following two subsections will be devoted, respectively, to the case of differential equations and the inchworm Monte Carlo method.

\subsection{Numerical experiments for ordinary differential equations} \label{sec: numer exp diff}
We consider an example as the following ordinary differential equation:
\begin{equation}\label{numexp:diff eq}
  \begin{split}
&\frac{\dd u}{\dd t} = -\frac{\ii}{2} K u(t) = \E_X \big(R(u,X)\big),\ t\in[0,T], \\
& R(u,X) = - \ii Xu
   \end{split}
\end{equation}
with the initial condition $u(0) = 1$ and the random variable $X \sim U(0,K)$.

We apply the two schemes proposed in \eqref{eq:two schemes} to get the numerical solutions $u_n$ and $\tu_n$ with uniform time step length $h = T/N$. For the stochastic $\tu_n$, we carry out the experiments independently for $N_{\exp} = 100N N_s$ times to obtain $\tu^{(1)}_n,\tu^{(2)}_n,\cdots,\tu^{(N_{\exp})}_n$ and we approximate the numerical error by
\begin{equation}\label{def: second moment diff}
\E(|u_n-\tu_n|_2^2) \approx \mu_n :=\frac{1}{N_{\exp}}\sum^{N_{\exp}}_{i=1} |u_n -\tu_n^{(i)}|^2, \qquad \text{for } n = 0,1,\cdots,N.
\end{equation}
Based on these settings, we now focus on the numerical order of the scheme and the growth of the numerical error with respect to $t$. For given time step $h$, we define the error function $e(\cdot)$ by $e(nh) = \mu_n$.

We first set $K=3$ and $K=10$ in \eqref{numexp:diff eq} and $T = 3$. Figure \ref{fig:evolution diff} shows the evolution of the numerical error $e(t)$ for $h = \frac{1}{4}$ and various numbers of samples $N_s$. For $K = 10$, the left panel of Figure \ref{fig:evolution diff} shows that the error grows exponentially over time as predicted in Theorem \ref{thm: diff bounds}, while for smaller $K$, the stability of the method takes effect, and the error grows only linearly up to $T=3$ as exhibited in the right panel of Figure \ref{fig:evolution diff}. This verifies that the exponential growth can be well controlled if appropriate Runge-Kutta schemes and sufficiently small time steps are adopted, which avoids the numerical sign problem in the Monte Carlo method that directly calculates \eqref{dyson}.

\begin{figure}[h]
    \centering
    \includegraphics[width=\textwidth]{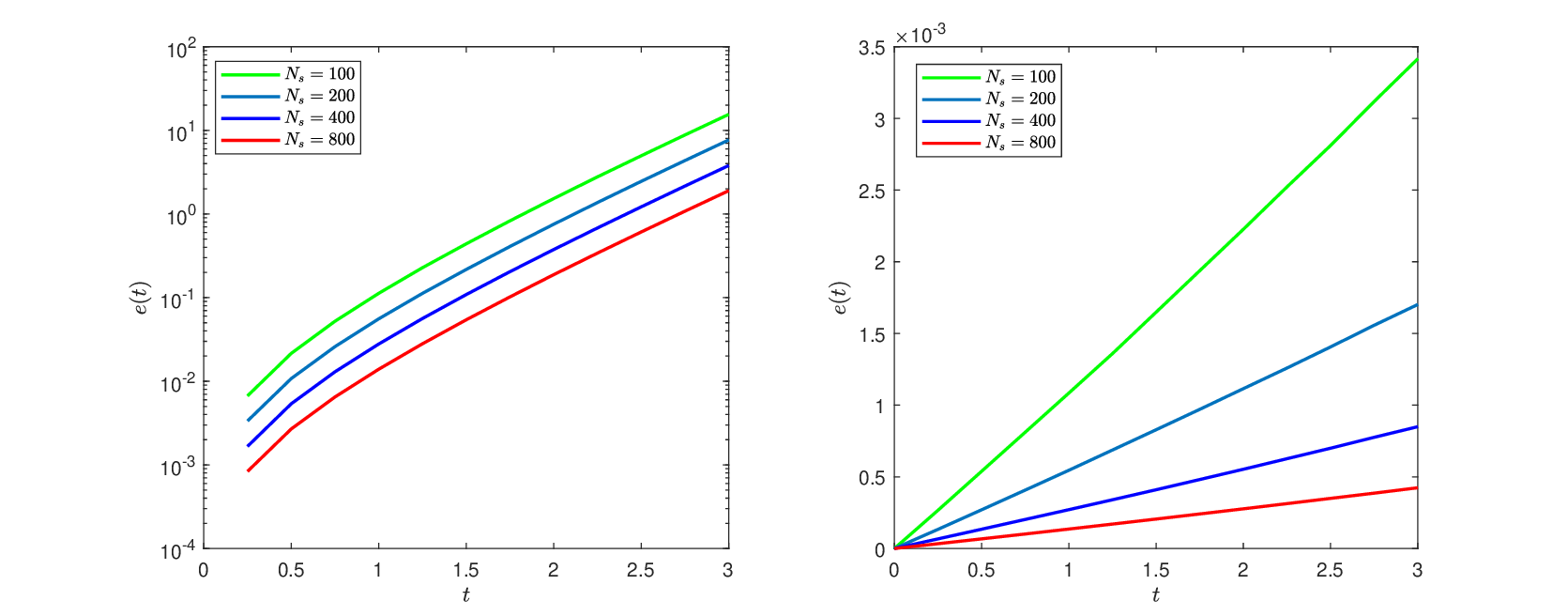}
    \caption{Evolution of numerical error $e(t)$ (left: $K=10$, right: $K=3$).}
    \label{fig:evolution diff}
\end{figure}

To verify the convergence rate with respect to $h$ and $N_s$ in the estimate \eqref{diff 2ed error upper bound}, we set $K=1$ and $T=1$ in \eqref{numexp:diff eq} and consider the numerical error at $t=0.5$ and $t=1$.
We first fix $N_s=100$ and reduce $h$ from $1/2$ to $1/64$, and then fix $h = 1/4$ and increase $N_s$ from $100$ to $3200$. The numerical errors are listed in Table \ref{tab:convergence rate}, from which we can easily observe the first-order convergence for both $h$ and $N_s$, as agrees with our estimate \eqref{diff 2ed error upper bound}.

\begin{table}[h]
\centering \small 
\begin{tabular}{| c |c c c c|| c | c c c c|}
\hline
$h, N_s$ & $e(0.5)$ & order & $e(1)$ & order & $h,N_s$  &  $e(0.5)$ & order & $e(1)$ & order \\
\hline
1/2, 100 &  1.0917e-04& -- & 2.1940e-04 &  -- & 1/4, 100 &  5.2721e-05 &  -- & 1.0593e-04 &   --  \\
1/4, 100 & 5.2721e-05 & 1.0502 & 1.0593e-04 &  1.0505 & 1/4, 200 &  2.6533e-05 & 0.9906 & 5.3332e-05 &  0.9901 \\
1/8, 100 & 2.6257e-05 &  1.0057 & 5.2776e-05 &  1.0052 & 1/4, 400 & 1.3210e-05 & 1.0062 &  2.6520e-05 & 1.0079  \\
1/16, 100 & 1.3027e-05 &  1.0111 &  2.6039e-05 &   1.0192 & 1/4, 800 & 6.6185e-06 & 0.9970 & 1.3254e-05 &     1.0007 \\
1/32, 100 & 6.5086e-06 & 1.0011 &1.3013e-05 &  1.0007 & 1/4, 1600 & 3.3043e-06 &  1.0022 & 6.5942e-06 &    1.0072  \\
1/64, 100 & 3.2579e-06 & 0.9984 & 6.5124e-06 &  0.9987 & 1/4, 3200 & 1.6528e-06 & 0.9995 & 3.3060e-06 & 0.9961  \\
\hline
\end{tabular}
       \caption{Numerical error $e(0.5),e(1)$ with different time step $h$ and number of samples $N_s$ and the order of accuracy.}
    \label{tab:convergence rate}
\end{table}

\subsection{Numerical experiments for the inchworm Monte Carlo method}\label{sec: numer exp integ diff}
To verify the error growth of the inchworm Monte Carlo method, we consider the spin-boson model where the system Hamiltonian has the energy difference $\epsilon =0.1$ and frequency of the spin flipping $\Delta =1$. For the bath part, we assume an Ohmic spectral density, which formulates the two-point correlation as 
\begin{equation} \label{def: B}
B(\tau_1, \tau_2) = \sum_{l=1}^L \frac{c_l^2}{2\omega_l} \left[
  \coth \left( \frac{\beta \omega_l}{2} \right) \cos \big( \omega_l (|\tau_1 -t| - |\tau_2 -t|) \big)
  - \ii \sin\big( \omega_l(|\tau_1 -t| - |\tau_2 -t|) )
\right] 
\end{equation}
where the coupling intensity $c_l$ and frequency of each harmonic oscillator $\omega_l$ above are respectively given by 
\begin{displaymath}
c_l = \omega_l \sqrt{\frac{\xi \omega_c}{L} [1 - \exp(-\omega_{\max}/\omega_c)]}, \quad \omega_l = -\omega_c
  \ln \left( 1 - \frac{l}{L} [1 - \exp(-\omega_{\max} / \omega_c)] \right),
  \quad l = 1,\cdots,L.
\end{displaymath}
As for the parameters above, we set $L = 200$, $\omega_{\max} = 4\omega_c$ with the primary frequency $\omega_c =3$, $\xi = 0.6$ and $\beta = 5$ throughout our experiments.

\subsubsection{Evolution of observable}
The observable of interest is set to be $O = \hat{\sigma}_z \otimes \mathrm{Id}_b$ which only acts on the system part, and the initial density matrix $\rho = \rho_s \otimes \rho_b$ is given by 
\begin{displaymath}
   \rho_s = \ket{1} \bra{1}  = \begin{pmatrix} 1 & 0 \\ 0 & 0 \end{pmatrix}\quad  \text{~and~} \quad  \rho_b = Z^{-1} \exp(-\beta H_b)\,,
\end{displaymath}
where $Z$ is a normalizing factor satisfying $\tr(\rho_b) = 1$. Therefore, evolution of the observable $\langle \hat{\sigma}_z(t)\rangle$ can be approximated via inchworm Monte Carlo method by 
\begin{displaymath}
\langle \hat{\sigma}_z(j h)\rangle \approx \bra{1} \tG_{N+j,N-j} \ket{1} =\tG^{(11)}_{N+j,N-j}, \text{~for~} j = 0,1,\cdots,N
\end{displaymath}
where $\tG_{n,m} \in \C^{2\times 2}$ is obtained by the scheme \eqref{def: scheme 2} and recall that $\tG^{(11)}_{n,m}$ is the $(1,1)$ entry. Such evolution is plotted in Figure \ref{fig:observable}. Note that due to the numerical error, the computed $\langle \hat{\sigma}_z(t)\rangle$ may contain a nonzero imaginary part, and here only the real part of the numerical result is plotted.

\begin{figure}[h]
    \centering
    \includegraphics[width=0.5\textwidth]{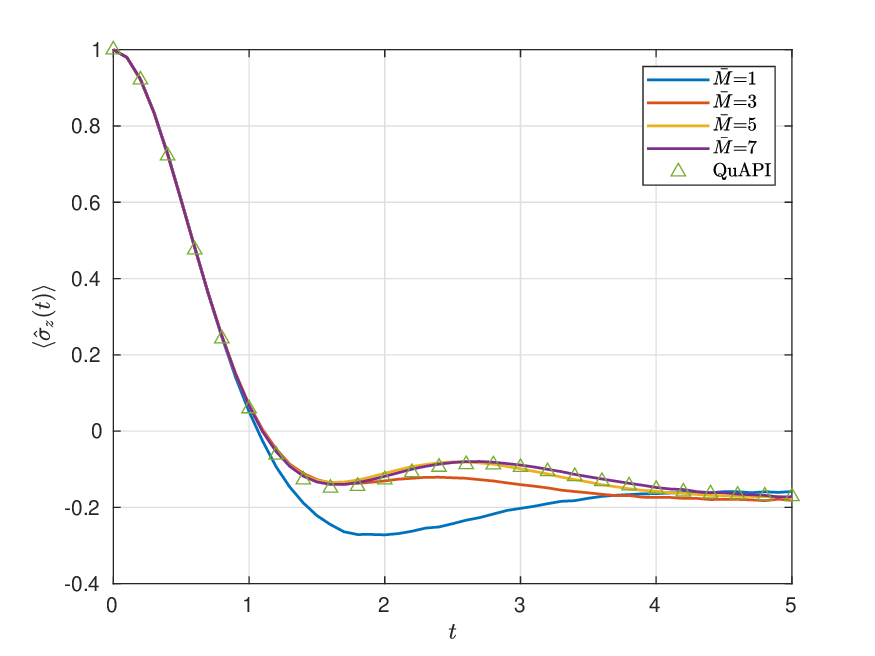}
    \caption{Evolution of $\text{Re}\langle \hat{\sigma}_z(t)\rangle$ by inchworm Monte Carlo method.}
    \label{fig:observable}
\end{figure}

The numerical results in Figure \ref{fig:observable} are obtained using the inchworm Monte Carlo method \eqref{def: scheme 2} with time step $h = 1/10$. We choose $N_s=10^4$ for $\bar{M}=1$, $N_s=10^5$ for $\bar{M}=3$, $N_s=10^6$ for $\bar{M}=5$ and $N_s=10^7$ for $\bar{M}=7$. For larger $\bar{M}$, the sign problem becomes more severe and thus we need more samples for each Monte Carlo integration in order for the curves to be sufficiently smooth. In addition, the results by the iterative QuAPI method \cite{Makri1995,Makri1998} are given as the reference solutions. 

One can observe from Figure \ref{fig:observable} that when $t < 1$, the four curves are hardly distinguishable, 
which agrees with the fact that smaller $\bar{M}$ is required for shorter-time simulations. For larger $t$, the truncation with $\bar{M}=1$ becomes inadequate, while $\bar{M}=3$ still provides reasonable approximation to the solution up to $t=5$. The convergence with respect to $\bar{M}$ can be observed by further increasing the terms in the truncated series.
In these simulations, due to the sufficient number of samples, the stochastic error is mostly suppressed, and one can see that the numerical result with $\bar{M}=5$ can already provide a satisfactory matching with the reference solution thanks to the rapid convergence of the series after partial resummation. 

As a comparison, the values of $\text{Re}\langle \hat{\sigma}_z(t)\rangle$ computed using the Dyson series truncated with $\bar{M}=2,4,6,8$ are plotted in Figure \ref{fig:observable_dyson}. Here $\bar{M}$ is the maximum value of $M$ when we truncate the Dyson series \eqref{eq:Ge_Dyson}, and it is chosen as an even number since the terms with odd $M$ are all zero. All these numerical results are obtained based on $N_s=10^8$. When $t<2$ (see the left panel), the curves appear to converge to the reference solution provided by the QuAPI method. However, the convergence is obviously much slower compared to the inchworm method. Even when $\bar{M}=8$, the numerical result of the Dyson series is still considerably far away from the reference solution around $t = 2$. Meanwhile, due to the quadratic exponential sign problem of Dyson series, the curves for $\bar{M}=6$ and $\bar{M}=8$ become oscillatory after $t=3$ despite of the large $N_s$ we use, and eventually turn out to be unreliable as the solution continues to evolve (see the right panel). This shows that Dyson series has encountered a faster error growth than inchworm method for $t<5$ and thus demonstrates that the inchworm method can effectively mitigate the sign problem for short-time simulations via partial resummation.

\begin{figure}[h]
    \centering
    \includegraphics[width=\textwidth]{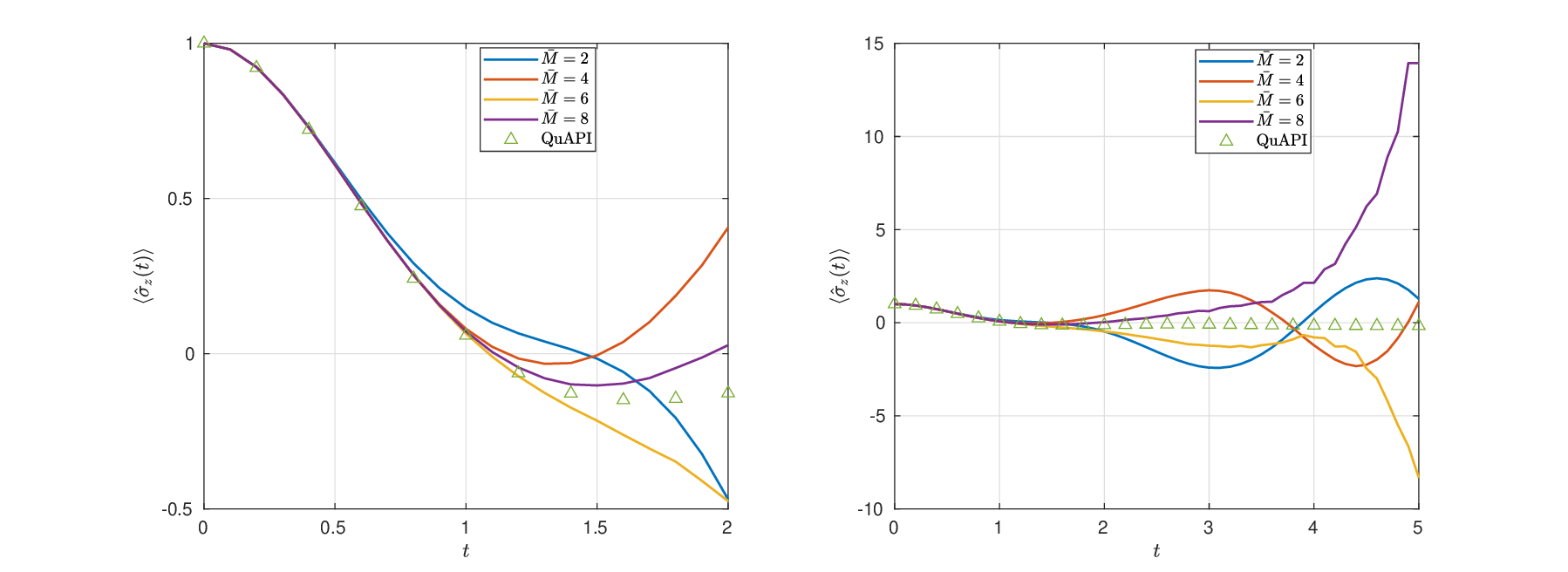}
    \caption{Evolution of $\text{Re}\langle \hat{\sigma}_z(t)\rangle$ by Dyson series (left: evolution up to $t=2$; right: evolution up to $t=5$).}
    \label{fig:observable_dyson}
\end{figure}

\subsubsection{Order of convergence}
Unlike the differential equation case, now it is much harder to find the solution of the deterministic scheme due to the high-dimensional integral on the right-hand side of \eqref{eq: inchworm equation}. Therefore, instead of verifying \eqref{2ed error upper bound} directly, we use
\begin{equation}
    \E(\|  \tG_{n,m} - G_{n,m}  \|^2) = \Var(\tG_{n,m}) + \|\E(\tG_{n,m}-G_{n,m})\|^2,
\end{equation}
and only take the first term on the right-hand side (the variance of $\tG_{n,m}$) to approximate our numerical error. Such an approximation is reasonable since the second term $\|\E(\tG_{n,m}-G_{n,m})\|^2$ has a higher order $O(h^2/N^2_s)$ by the bias estimation \eqref{1st error upper bound}. To compute the variance numerically, we run the same simulation $N_{\exp}$ times, and compute the unbiased estimation of the variance:
\begin{displaymath}
\Var(\tG_{n,m}) \approx \bar{\mu}_{n,m} := \frac{N_{\exp}}{N_{\exp} - 1} \Bigg(  \frac{1}{N_{\exp}}\sum_{k=1}^{N_{\exp}} \|\tG_{n,m}^{[k]} \|^2   -  \Bigg\|\frac{1}{N_{\exp}} \sum_{i=1}^{N_{\exp}} \tG_{n,m}^{[k]} \Bigg\|^2   \Bigg),
\end{displaymath}
where $\tG_{n,m}^{[k]}$ is the result of the $k$th simulation. For a given time step $h$, we let $e(jh) = \bar{\mu}_{N+j,N-j}$. Below we first check the numerical order for $\bar{M}=3$. By choosing $N_{\exp} = 1000N N_s$, we get results shown in Table \ref{tab:order}, from which one can clearly observe the order of convergence given in \eqref{2ed error upper bound}.
\begin{table}[h]
\centering \small
\begin{tabular}{| c |c c c c|| c | c c c c|}
\hline
$h,N_s$ & $e(0.5)$ & order & $e(1)$ & order & $h,N_s$  & $e(0.5)$ & order & $e(1)$ & order \\
\hline
$1/10, 2$ & 0.0417 & -- &  0.1488 & -- & $1/4,1$ & 0.1939 & -- & 0.8579 & -- \\
$1/12, 2$ & 0.0350 & 0.9658  & 0.1228 & 1.0505 & $1/4,2$ & 0.0972 & 0.9959 & 0.3908 & 1.1344 \\
$1/14, 2$ &  0.0303 & 0.9293 & 0.1051 & 1.0083 & $1/4,4$ & 0.0473 &  1.0386 & 0.1824 & 1.0990 \\
$1/16, 2$ & 0.0263 & 1.0574 & 0.0915 & 1.0409 & $1/4,8$ & 0.0237 & 0.9998 & 0.0886 & 1.0417 \\
$1/18, 2$ & 0.0237 & 0.8936 & 0.0811 & 1.0203 & $1/4,16$ & 0.0119 & 0.9962 & 0.0436 & 1.0235 \\
$1/20, 2$ & 0.0214 & 0.9614 & 0.0728 & 1.0341 & $1/4,32$ & 0.0059 & 1.0053 & 0.0217 & 1.0091 \\
\hline
\end{tabular}
\caption{Numerical error $e(0.5)$, $e(1)$ with different time step $h$ and number of samples $N_s$ and the order of accuracy} \label{tab:order}
\end{table}

\subsubsection{Error growth}

Despite the good performance the inchworm Monte Carlo method in this example, we will show in this section that the evolution of the numerical error indeed follows the asymptotic behavior in our error analysis.
To this end, we compare the growth of error using inchworm Monte Carlo method with $\bar{M}=1$ and $\bar{M}=3$ in Figure \ref{fig:error_growth}, where the time step is set to be $h = 1/8$, and we choose $N_{\exp} = 700N N_s$. As predicted, the two curves almost coincide for $t < 1$. For $\bar{M} = 1$, the numerical error starts to show the exponential growth from $t = 4.5$, and for $\bar{M} = 3$, the quadratic exponential growth becomes obvious from $t = 2.5$. Both results are in accordance with the theoretical results in Theorem \ref{thm: bounds}.

\begin{figure}[h]
    \centering
    \includegraphics[width=\textwidth]{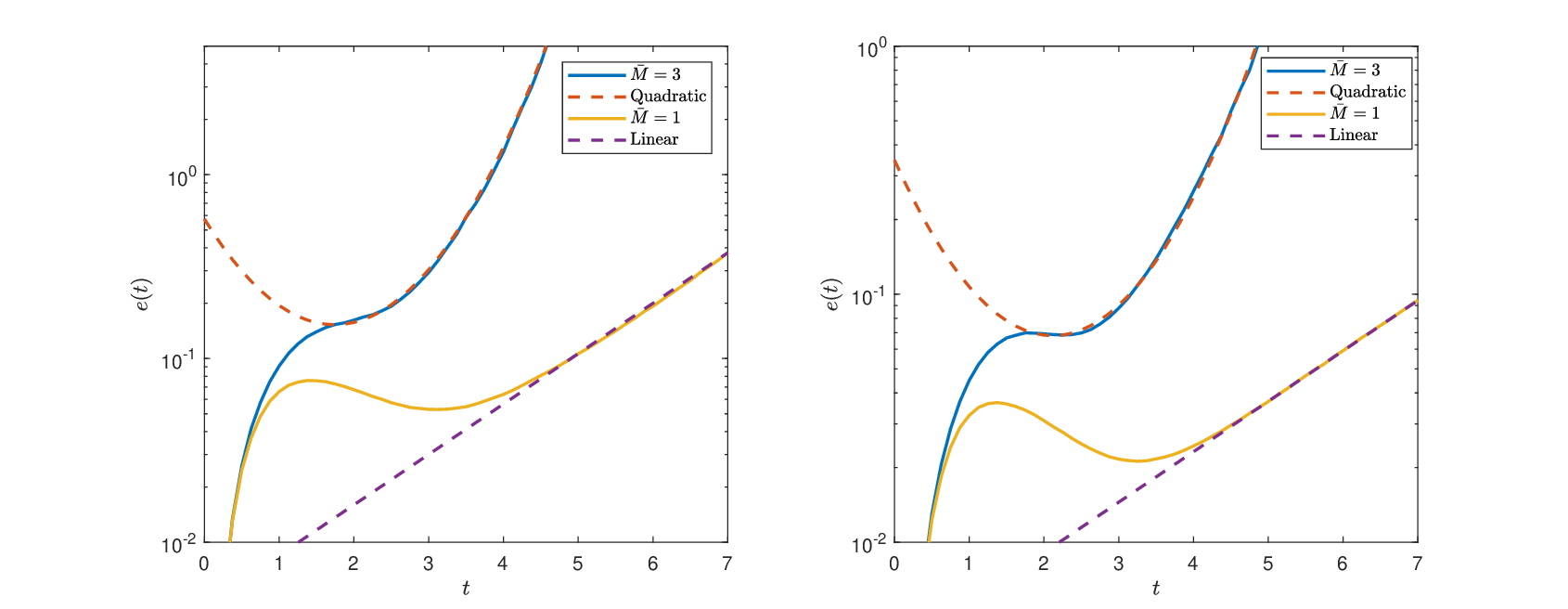}
    \caption{Evolution of numerical error $e(t)$ (left: $N_s=4$, right: $N_s = 8$).}
    \label{fig:error_growth}
\end{figure}

By now, we have stated all the results in this paper. From the next section, we will start to prove the theorems and propositions.

\section{Proofs for the case of differential equations}
\label{sec: proof diff eq}
In this section, we prove the results for differential equations as stated in Section \ref{sec: diff results}.

\subsection{Proof of Proposition \ref{thm: diff recurrence relations} --- Part I: Recurrence relation for the bias}
\label{sec: diff recurrence 1}
In this section we focus on the proof of \eqref{eq: diff recurrence 1}. By taking the difference of the schemes \eqref{def: scheme diff rk} and \eqref{def: scheme diff mc} and applying the triangle inequality and the bounds of the coefficients, we get
\begin{equation}\label{eq: evalue_1st_err}
\|\E(u_{n+1} -\tu_{n+1})\|_2 \le \|\E (u_{n} -\tu_{n})\|_2 + h R \sum^s_{i=1} \|\E(k_i - \tk_i)\|_2
\end{equation}
for all non-negative integer $n$. We then focus on the estimate for $\|\E(k_i - \tk_i)\|_2$. In fact, we have the following results:
\begin{lemma}\label{lemma: deltak and deltak^2}
Given a sufficiently small time step length $h$. If the boundedness assumption \eqref{assump: bd} are satisfied, we have 
\begin{equation}\label{eq: est_krk_1st_global}
 \begin{split}
\|\E(k_{i} - \tk_{i})\|_2 \le \alpha' \Big( \|\E(u_n -\tu_n)\|_2  + \E\big(\|u_n -\tu_n\|_2^2\big) + \frac{h^2}{N_s} \rkn^2 \Big)
  \end{split}
\end{equation}
and 
\begin{equation}\label{eq: est_krk_2ed_global}
\E(\|k_{i} - \tk_{i}\|_2^2) \le   \beta' \Big(  \E\big(\|u_n -\tu_n\|_2^2\big)  + \frac{1}{N_s} \rkn^2 \Big),
\end{equation}
for $\beta' = 2^{s+1} \max(2ds M'^2, 1)$ and $\alpha'= 2^s\sqrt{d} \max(M', 2s M'', s^2M'' R^2 \beta')$. Here we recall that $s$ is the number of Runge-Kutta stages, $d$ is the dimension of solution $u$ and $R,M',M''$ are some upper bounds defined in \eqref{assump: bd}--\eqref{assump: rk bd}.
\end{lemma}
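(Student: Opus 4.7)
I would prove both bounds simultaneously by induction on the stage index $i=1,\ldots,s$, based on the decomposition
\begin{equation*}
k_i - \tk_i = \bigl[f(t_n+c_ih, a_i) - f(t_n+c_ih, b_i)\bigr] + \bigl[f(t_n+c_ih, b_i) - \tk_i\bigr],
\end{equation*}
where $a_i = u_n + h\sum_{j<i} a_{ij}k_j$ is deterministic while $b_i = \tu_n + h\sum_{j<i}a_{ij}\tk_j$ is random. The first bracket is a Lipschitz-type defect, controlled using $\|\nabla_u f^{(m)}\|_2 \le M'$, and the second bracket is the Monte Carlo sampling error. The crucial structural observation is that $X_l^{(i)}$ is drawn independently of $b_i$, so $\E[\tk_i\mid b_i] = f(t_n+c_ih,b_i)$; the second bracket therefore has conditional mean zero, which will allow it to be dropped entirely from the bias computation.

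For the second-moment estimate \eqref{eq: est_krk_2ed_global}, I would split with $\|x+y\|^2\le 2\|x\|^2+2\|y\|^2$. The first term is dominated by $\lesssim M'^2\|a_i-b_i\|^2$ via the gradient bound. The second term has conditional variance $N_s^{-1}\Var(g(t_n+c_ih,b_i,X)\mid b_i)$, and the main work here is to relate this to $\rkn^2$, which is evaluated at the deterministic argument $a_i$ rather than $b_i$; I would do so by inserting $\pm g(t_n+c_ih,a_i,X)$ and $\pm f(t_n+c_ih,a_i)$ inside the norm and paying an $O(M'^2\|a_i-b_i\|^2)$ correction via the gradient bound on $g$. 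The Cauchy--Schwarz bound $\E\|a_i-b_i\|^2 \le 2\E\|u_n-\tu_n\|_2^2 + 2h^2R^2(i-1)\sum_{j<i}\E\|k_j-\tk_j\|_2^2$ combined with the induction hypothesis for $j<i$ then closes the loop; the smallness of $h$ absorbs the $O(h^2R^2i^2\beta'_{<i})$ cross term, producing a doubling of the constant at each stage and hence the $2^{s+1}$ prefactor asserted in $\beta'$.

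For the bias \eqref{eq: est_krk_1st_global}, the zero-mean property of the Monte Carlo bracket yields $\E(k_i-\tk_i) = \E[f(t_n+c_ih,a_i) - f(t_n+c_ih,b_i)]$. Taylor-expanding each component of $f$ at $a_i$ with second-order remainder and using $\|\nabla_u^2 f^{(m)}\|_{\FF} \le M''$ gives $|\E[f^{(m)}(t,a_i)-f^{(m)}(t,b_i)]| \le M'\|\E(b_i-a_i)\|_2 + \tfrac{M''}{2}\E\|b_i-a_i\|_2^2$. The first-order piece is folded into the induction through $\|\E(u_n-\tu_n)\|_2$ and the $\|\E(k_j-\tk_j)\|_2$ for $j<i$; the second-order piece is handled by invoking the second-moment bound already in hand, which is precisely the reason $\alpha'$ inherits the combined factor $s^2M''R^2\beta'/2$. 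The main obstacle is to keep the two recursions separable: the second-moment recursion must be closed first and then \emph{substituted} into the bias recursion, rather than iterating them jointly, for otherwise $M''$ would pollute every stage of the bias estimate and destroy the advertised structure of $\alpha'$. Careful exploitation of the small-$h$ hypothesis to absorb $O(h^2 R^2 i^2)$ terms throughout the induction is the secondary bookkeeping concern.
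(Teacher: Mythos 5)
Your proposal is correct and follows essentially the same route as the paper: the conditional unbiasedness of the Monte Carlo bracket reduces the bias to a Taylor expansion of $f$ between the deterministic and stochastic stage arguments (first-order term folded into the bias recursion, Hessian term controlled by the second-moment bound, which is exactly how $\alpha'$ acquires the factor $s^2M''R^2\beta'/2$), the second moment is split into a Monte Carlo variance piece plus an $O(M'^2)$ Lipschitz defect, and the two stage recursions are closed in the order you describe, second moment first and then substituted into the bias. The only cosmetic difference is that the paper centers the Monte Carlo error at the deterministic argument, so $\rkn^2$ appears directly without the extra variance-comparison step you propose; both arrangements give the same bounds.
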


With the above Lemma, one may insert the estimate \eqref{eq: est_krk_1st_global} into \eqref{eq: evalue_1st_err} to get the recurrence relation \eqref{eq: diff recurrence 1} stated in Proposition \ref{thm: diff recurrence relations} for the bias $\|\E(u_{n+1}-\tu_{n+1})\|_2$. The proof of Lemma \ref{lemma: deltak and deltak^2} is given below:

\begin{proof}[Proof of Lemma \ref{lemma: deltak and deltak^2}]
Apply the relation \eqref{def: g} and use Taylor expansion at the deterministic point $\big(t_n + c_{i} h, u_{n} + h \sum^{i-1}_{j = 1} a_{ij} k_j\big)$, we have for the $m$th component of $\E(k_{i} - \tk_{i})$, 
\begin{equation}\label{eq: k^1 expansion}
  \begin{split}
|\E (k^{(m)}_{i} - \tk^{(m)}_{i})| &=
\left|\E \Big( f^{(m)}\big(t_n + c_{i} h, u_{n} + h \sum^{i-1}_{j = 1} a_{ij} k_j\big) - f^{(m)}\big(t_n + c_{i} h, \tu_n + h \sum^{i-1}_{j = 1} a_{ij} \tk_j\big) \Big)\right| \\
&\le M'\|\E w_i\|_2 + \frac{M''}{2} \E \|w_i\|_2^2.
  \end{split}
\end{equation}
where
\begin{displaymath}
w_i = (u_n -\tu_n)+h\sum^{i-1}_{j = 1} a_{ij} (k_j - \tk_j),
\end{displaymath}
and we have applied the boundedness assumption \eqref{assump: bd}. The above inequality immediately yields
\begin{equation}
\begin{split}
  \|\E (k_{i} - \tk_{i})\|_2 
 & \le \sqrt{d}  M' \Big( \|\E(u_n -\tu_n)\|_2 + h R \sum^{i-1}_{j  = 1}\|\E(k_j - \tk_j)\|_2 \Big) \\
 & +  \sqrt{d} sM'' \Big( \E(\|u_n -\tu_n)\|_2^2) +  h^2 R^2 \sum^{i-1}_{j  = 1}\E(\|k_j - \tk_j\|_2^2)  \Big).
\end{split}
\end{equation}
By recursion, we obtain
\begin{equation}\label{eq: est_krk_1st}
\|\E (k_{i} - \tk_{i})\|_2 \le (1+h\sqrt{d}RM')^s \bigg[
 \sqrt{d} M' \|\E(u_n - \tu_n)\| + \sqrt{d}sM'' \Big( \E(\|u_n -\tu_n\|_2^2) +  h^2 R^2 \sum^{i-1}_{j  = 1}\E(\|k_j - \tk_j\|_2^2)  \Big) \bigg].
\end{equation}

We observe from the inequality above that the upper bound of $\|\E(k_{i} - \tk_{i})\|_2$ is partially determined by the second moment $\E(\|k_j - \tk_j\|_2^2)$ up to $(i-1)$-th Runge-Kutta stage. Therefore, we subsequently consider the estimate for $\E(\|k_j - \tk_j\|_2^2)$. By direct calculation, 
\begin{displaymath}
  \begin{split}
\E(\|k_{i} - \tk_{i}\|_2^2)
& =  \ \E\Big[ \ \Big\|f\big(t_n + c_{i} h, u_{n} + h \sum^{i-1}_{j = 1} a_{ij} k_j\big) - \frac{1}{N_s} \sum^{N_s}_{l = 1} g\big(t_n + c_{i} h, \tu_n + h \sum^{i-1}_{j = 1} a_{ij} \tk_j,X^{(i)}_l\big) \Big\|_2^2 \ \Big]\\
& \le \  2\E\Big[ \ \Big\|f\big(t_n + c_{i} h, u_{n} + h \sum^{i-1}_{j = 1} a_{ij} k_j\big) - \frac{1}{N_s} \sum_{l=1}^{N_s} g\big(t_n + c_{i} h,u_n + h \sum^{i-1}_{j = 1} a_{ij} k_j, X_l^{(i)} \big)\Big\|_2^2 \ \Big] \\
 &\quad +\frac{2}{N_s^2} \E\Big[ \ \Big\|\sum^{N_s}_{l = 1} \Big[ g\big(t_n + c_{i} h, u_n + h \sum^{i-1}_{j = 1} a_{ij} k_j,X^{(i)}_l\big) - g\big(t_n + c_{i} h, \tu_n + h \sum^{i-1}_{j = 1} a_{ij} \tk_j,X^{(i)}_l\big) \big] \Big\|_2^2 \ \Big]\\
& \le \ \frac{2}{N_s} \rkn^2 + 2M'^2 d \E \|w_i\|_2^2 \\
& \le \ 4ds{M'}^2 \Big(  \E\big(\|u_n -\tu_n\|_2^2\big) + R^2 h^2 \sum_{j=1}^{i-1} \E\big(\|k_j - \tk_j\|_2^2 \big)  \Big) + \frac{2}{N_s} \rkn^2.
   \end{split}
\end{displaymath}
Here we have used the mean value theorem and the standard error estimates for the Monte Carlo method to obtain the upper bound. 
By applying the above inequality recursively backwards to the first Runge-Kutta stage, we obtain a uniform bound for $\E(\|k_{i} - \tk_{i}\|_2^2)$:
\begin{equation} \label{eq: est_krk_2ed}
\E(\|k_{i} - \tk_{i}\|_2^2) \le (1+ 4 M'^2 R^2 d s h^2)^{s} \Big(   4 dsM'^2 \E\big(\|u_n -\tu_n\|_2^2\big)  + \frac{2}{N_s} \rkn^2   \Big).
\end{equation}
The estimation \eqref{eq: est_krk_2ed_global} can be obtained by setting $\beta' = 2^{s+1} \max(2 d s M'^2, 1)$ for $h \le 1/(2M'R\sqrt{ds})$. Substituting \eqref{eq: est_krk_2ed} into \eqref{eq: est_krk_1st}, we can obtain \eqref{eq: est_krk_1st_global} with $h \le 1/ \max(M'R\sqrt{d}, R\sqrt{s\beta'})$.
\end{proof}

\subsection{Proof of Proposition \ref{thm: diff recurrence relations} --- Part II: Recurrence relation for the numerical error}
\label{sec: diff recurrence 2}
We again insert the two schemes and expand the numerical error $\E\big(\|u_{n+1}-\tu_{n+1}\|_2^2\big)$ into
\begin{equation}\label{eq: est_2ed_order_twoparts}
   \begin{split}
\E\big(\|u_{n+1} -\tu_{n+1}\|_2^2\big) &= \E\Big[ \ \Big\|( u_n - \tu_n ) + h  \sum_{i=1}^s b_i (k_i - \tk_i )   \Big\|_2^2 \ \Big]  \\
&= \E\big(\|u_n - \tu_n\|_2^2\big) +h^2 \E\Big[ \ \Big\| \sum_{i=1}^s b_i (k_i - \tk_i )  \Big\|_2^2 \ \Big]  \\
&\hspace{20pt}+ h\E\Big[(u_n - \tu_n)^\dagger \sum_{i=1}^s b_i (k_i - \tk_i ) \Big]  + h\E\Big[ \Big(\sum_{i=1}^s b_i (k_i - \tk_i ) \Big)^\dagger (u_n - \tu_n)\Big].
    \end{split}
\end{equation} 
The second term on the right-hand side can be immediately estimated using the previous result \eqref{eq: est_krk_2ed_global} given in Lemma \ref{lemma: deltak and deltak^2}:
\begin{equation}\label{eq: est_2ed_order_term_1}
h^2 \E\Big[ \ \Big\| \sum_{i=1}^s b_i (k_i - \tk_i )  \Big\|_2^2 \ \Big] \le \ R^2 s h^2   \sum_{i=1}^s\E\big(\|  k_i - \tk_i  \|_2^2\big) \le \ R^2 s^2  \beta' \Big( h^2  \E\big(\|u_n -\tu_n\|_2^2\big)  + \frac{h^2}{N_s} \rkn^2  \Big).
\end{equation}
Using this estimate, naively we can bound the last two cross terms in \eqref{eq: est_2ed_order_twoparts} by Cauchy-Schwarz inequality. However, such a strategy will lead to an error estimate with the form
\begin{displaymath}
h\E\Big[(u_n - \tu_n)^\dagger \sum_{i=1}^s b_i (k_i - \tk_i ) \Big]  + h\E\Big[ \Big(\sum_{i=1}^s b_i (k_i - \tk_i ) \Big)^\dagger (u_n - \tu_n)\Big]
\le C \left(h \E(\|u_n - \tu_n\|_2^2) + \frac{h}{N_s} \rkn^2\right),
\end{displaymath}
where the last term is sub-optimal and will lead to a deterioration in the final error estimate. Therefore we need a more careful estimate as in the following lemma:
\begin{lemma}
 \label{lemma: 2ed error cross term}
Given a sufficiently small time step length $h$. If the boundedness assumptions \eqref{assump: bd} are satisfied, we have 
\begin{equation}\label{eq: est_2ed_order_term_2}
h\E\Big[(u_n - \tu_n)^\dagger \sum_{i=1}^s b_i (k_i - \tk_i ) \Big]  + h\E\Big[ \Big(\sum_{i=1}^s b_i (k_i - \tk_i ) \Big)^\dagger (u_n - \tu_n)\Big] \le  C_{\emph{cr}} \Bigl( \frac{\alpha'^2 h^5}{N_s^2} \rkn^4 + h   \E(\|u_n - \tu_n\|_2^2) \Bigr),
\end{equation}
where $C_{\text{cr}} = \max(R^2 s^2, 2+2^{s+2} {M'}^2 d R^2 s^3)$. Here we recall that $s$ is the number of Runge-Kutta stages, $d$ is the dimension of solution $u$, $R,M'$ are some upper bounds defined in the assumptions \eqref{assump: bd}--\eqref{assump: rk bd} and $\alpha'$ is given in Lemma \ref{lemma: deltak and deltak^2}.
\end{lemma}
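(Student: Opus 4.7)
The plan is to combine the tower property of conditional expectation with the splitting $\tk_i = \bar k_i + \delta_i$, where $\bar k_i := f(t_n + c_i h, \tu_n + h\sum_{j<i} a_{ij}\tk_j)$ is the ``conditionally deterministic'' Runge--Kutta slope given the prior stages, and $\delta_i := \tk_i - \bar k_i$ is the Monte Carlo noise introduced at stage $i$. Because $\tk_i$ is an unbiased average of $N_s$ independent samples drawn \emph{after} $\tk_1,\ldots,\tk_{i-1}$ are fixed, one has $\E[\delta_i \mid u_n, \tu_n, \tk_1, \ldots, \tk_{i-1}] = 0$. Observing that $u_n - \tu_n$ is measurable with respect to this conditioning $\sigma$-algebra and applying the tower property gives $\E[(u_n-\tu_n)^\dagger \delta_i] = 0$, so each of the $s$ cross terms collapses to $\E[(u_n-\tu_n)^\dagger (k_i - \bar k_i)] = \E[(u_n-\tu_n)^\dagger \E[k_i - \tk_i \mid u_n, \tu_n]]$.

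Next I would derive a conditional refinement of Lemma~\ref{lemma: deltak and deltak^2}: namely
\begin{equation*}
\bigl\|\E[k_i - \tk_i \mid u_n, \tu_n]\bigr\|_2 \le C_L \|u_n-\tu_n\|_2 + C_Q\bigl(\|u_n-\tu_n\|_2^2 + h^2\rkn^2/N_s\bigr),
\end{equation*}
where $C_L$ depends only on $M'$, $R$, $s$ (inherited from the linear term $M'\|\E w_i\|$ in \eqref{eq: k^1 expansion}) and $C_Q$ carries the $M''$-dependence of the quadratic Taylor remainder. This follows by repeating the stage-wise induction in the proof of Lemma~\ref{lemma: deltak and deltak^2}, but with every unconditional expectation replaced by the conditional expectation given $(u_n, \tu_n)$, so that $\|\E(u_n-\tu_n)\|$ and $\E\|u_n-\tu_n\|^2$ collapse to the deterministic $\|u_n-\tu_n\|$ and $\|u_n-\tu_n\|^2$.

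Combining these two ingredients with $|v^\dagger w|\le \|v\|_2\|w\|_2$ leaves three terms to bound after multiplication by $h$: $hC_L\,\E\|u_n-\tu_n\|_2^2$, $hC_Q\,\E\|u_n-\tu_n\|_2^3$, and $hC_Q(h^2\rkn^2/N_s)\,\E\|u_n-\tu_n\|_2$. The first is already in the desired form. For the last, Jensen reduces $\E\|u_n-\tu_n\|_2$ to $\sqrt{\E\|u_n-\tu_n\|_2^2}$, after which the asymmetric AM--GM $ab \le \frac{1}{2}(a^2+b^2)$ applied to $a = \alpha' h^{5/2}\rkn^2/N_s$ and $b = \sqrt{h\,\E\|u_n-\tu_n\|_2^2}$ produces exactly $\frac{1}{2}\bigl(\alpha'^2 h^5 \rkn^4/N_s^2 + h\,\E\|u_n-\tu_n\|_2^2\bigr)$, which is the key algebraic move giving the advertised $h^5/N_s^2$ scaling. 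The conjugate cross term is handled identically, and summing over $i=1,\ldots,s$ with $|b_i|\le R$ collects the Runge--Kutta constants into $C_{\text{cr}}$.

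The main obstacle is the cubic term $hC_Q\,\E\|u_n-\tu_n\|_2^3$: the stated $C_{\text{cr}}$ is $M''$-free, so one cannot simply bound $\|u_n-\tu_n\|_2$ by a constant (which would drag $M''$ in through $C_Q$). The remedy is to invoke the ``sufficiently small $h$'' hypothesis so that the product $hC_Q$ times any a priori bound on $\|u_n-\tu_n\|_2$ is dominated by a universal constant, after which the cubic term is absorbed into $h\,\E\|u_n-\tu_n\|_2^2$ and only the Runge--Kutta-stage combinatorics $2^{s+1}{M'}^2 d R^2 s^3$ remain in $C_{\text{cr}}$.
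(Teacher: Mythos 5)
Your decomposition is genuinely different from the paper's and the first half of it is sound: writing $\tk_i = \bar k_i + \delta_i$ with $\bar k_i = f(t_n+c_ih,\tu_n+h\sum_{j<i}a_{ij}\tk_j)$ does make $\delta_i$ exactly conditionally centered, and the tower-property reduction of the cross term to $\E[(u_n-\tu_n)^\dagger\,\E(k_i-\tk_i\mid u_n,\tu_n)]$ is correct. The AM--GM step that converts $h^3\rkn^2 N_s^{-1}\sqrt{\E\|u_n-\tu_n\|_2^2}$ into $\tfrac12(\alpha'^2h^5\rkn^4N_s^{-2}+h\,\E\|u_n-\tu_n\|_2^2)$ is also exactly the mechanism that produces the advertised $h^5/N_s^2$ scaling.

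The gap is the cubic term $hC_Q\,\E\|u_n-\tu_n\|_2^3$, and your proposed remedy does not close it. The lemma is a per-step estimate used \emph{before} any global control of the error is established, and the ODE part of the paper (unlike assumption (H1) for the inchworm scheme) assumes no a priori bound on $\|u_n-\tu_n\|_2$; ``sufficiently small $h$'' must be chosen in terms of $M',M'',R,s,d$ only, not in terms of the unknown quantity $\|u_n-\tu_n\|_2$ you are trying to estimate. So the product $hC_Q\|u_n-\tu_n\|_2$ cannot be ``dominated by a universal constant,'' and the cubic term cannot be absorbed into $h\,\E\|u_n-\tu_n\|_2^2$ without dragging $M''$ into $C_{\text{cr}}$ or invoking a circular bootstrap. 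The cubic term arises only because you carried the second-order Taylor expansion of Lemma~\ref{lemma: deltak and deltak^2} into your conditional refinement; conditionally on $(u_n,\tu_n)$ there is nothing to gain from expanding to second order in the $u_n-\tu_n$ direction. The paper sidesteps this entirely by choosing the intermediate object differently: its $\bk_i$ uses \emph{deterministic} sub-stages $\bk_j$ (the full deterministic Runge--Kutta step started from $\tu_n$), so $k_i-\bk_i$ is a difference of two deterministic computations and is controlled \emph{linearly} in $\|u_n-\tu_n\|_2$ using only the first-derivative bound $M'$ (hence $\E\|k_i-\bk_i\|_2^2\lesssim M'^2\E\|u_n-\tu_n\|_2^2$ with no quadratic remainder), while the residual $\bk_i-\tk_i$ has conditional mean bounded by the deterministic quantity $\alpha'h^2\rkn^2/N_s$ (Lemma~\ref{thm: diff_tkvsbk}), which squares to the $h^5/N_s^2$ term. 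If you want to keep your $\bar k_i$, you must replace the conditional second-order expansion by a plain Lipschitz bound on $k_i-\bar k_i$; note, however, that the sub-stage differences $k_j-\tk_j$ then enter only through their conditional first or second moments multiplied by an extra factor $h$, and tracking this carefully essentially reconstructs the paper's argument.
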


With Lemma \ref{lemma: 2ed error cross term}, we now plug the estimates \eqref{eq: est_2ed_order_term_1} and \eqref{eq: est_2ed_order_term_2} into \eqref{eq: est_2ed_order_twoparts} to obtain the recurrence relation \eqref{eq: diff recurrence 1} for the numerical error by
\begin{equation}
\E(\|u_{n+1}-\tu_{n+1}\|_2^2) \le (1+C_{\text{cr}} h + R^2 s^2 \beta' h^2) \E(\|u_{n}-\tu_{n}\|_2^2) + \Big(C_{\text{cr}}\frac{\alpha'^2 h^5}{N_s^2} \rkn^4 + R^2 s^2 \beta'\frac{h^2}{N_s} \rkn^2 \Big),
\end{equation}
from which one can see that if $h$ and $N_s$ satisfy $h\le \frac{C_{\text{cr}}}{R^2 s^2 \beta'}$, then \eqref{eq: diff recurrence 2} holds for $\beta = \max(2C_{\text{cr}}, R^2 s^2 \beta')$ with $\beta'$ is given in Lemma \ref{lemma: deltak and deltak^2}.

The rest of this section devotes to the proof of Lemma \ref{lemma: 2ed error cross term}.
We introduce a ``semi-stochastic'' approximation $\bu_{n+1}$ defined by
\begin{equation} \label{eq: semi-stochastic}
\begin{aligned}
& \bk_i = f(t_n + c_i h, \tu_n + h \sum^{i-1}_{j = 1} a_{ij} \bk_j), \quad i = 1,\cdots,s; \\
& \bu_{n+1} = \tu_n + h \sum^s_{i=1} b_i \bk_i.
\end{aligned}
\end{equation}
This approximation applies the deterministic Runge-Kutta scheme to the stochastic solution $\tu_n$ for one time step. The following Lemma controls the difference between this local approximation and the stochastic scheme \eqref{def: scheme diff mc}.

\begin{lemma}
\label{thm: diff_tkvsbk}
Let $X_i :=\big(X^{(1)},X^{(2)},\cdots,X^{(i)}\big)$ be the collection of samples up to $i$th Runge-Kutta stage where each $X^{(j)} = (X^{(j)}_1,X^{(j)}_2,\cdots,X^{(j)}_{N_s})$, we have 
\begin{equation}
 \begin{split}
 &\|\E_{X_i}(\bk_{i} - \tk_{i})\|_2  \le  \ \alpha' \frac{h^2}{N_s} \rkn^2,
\end{split}
\end{equation}
where $\alpha'$ is given in \eqref{eq: est_krk_1st_global}.
\end{lemma}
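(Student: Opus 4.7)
The plan is to exploit the conditional unbiasedness of the Monte Carlo estimator together with a Taylor expansion to extract an extra factor of $h$ beyond the standard Monte Carlo rate $1/\sqrt{N_s}$, and then close the resulting recursion in $i$ by induction.

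The starting point is the tower property. Since the batch $X^{(i)}$ is independent of $X_{i-1}=(X^{(1)},\ldots,X^{(i-1)})$ and the argument $\tilde v_i := \tu_n + h\sum_{j<i} a_{ij}\tk_j$ is $X_{i-1}$-measurable, taking the inner expectation over $X^{(i)}$ gives $\E_{X^{(i)}}[\tk_i\mid X_{i-1}] = f(\cdot,\tilde v_i)$, whereas $\bar k_i = f(\cdot,\bar v_i)$ with $\bar v_i := \tu_n + h\sum_{j<i} a_{ij}\bar k_j$ is already $X_{i-1}$-independent. Hence
\[
\E_{X_i}(\bar k_i - \tk_i) \;=\; \E_{X_{i-1}}\!\bigl[f(\cdot,\bar v_i)-f(\cdot,\tilde v_i)\bigr],
\qquad \bar v_i - \tilde v_i = h\sum_{j<i} a_{ij}(\bar k_j - \tk_j).
\]
This is the identity that replaces the naive $O(1/\sqrt{N_s})$ Monte Carlo bound by a smoother object, essentially because the sampling noise in $\tk_i$ itself has already been integrated out.

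Next I would Taylor expand each component $f^{(m)}$ around the $X_{i-1}$-independent point $\bar v_i$. After taking expectations, the linear piece picks up $\nabla f^{(m)}(\bar v_i)$ (bounded by $M'$) contracted against $\E_{X_{i-1}}(\tilde v_i - \bar v_i) = h\sum_{j<i} a_{ij}\E_{X_j}(\tk_j - \bar k_j)$, while the Hessian remainder contributes $\tfrac{M''}{2}\E\|\bar v_i-\tilde v_i\|_2^2 \le \tfrac{sM''h^2R^2}{2}\sum_{j<i}\E\|\bar k_j-\tk_j\|_2^2$. To estimate $\E\|\bar k_j - \tk_j\|_2^2$ I would rerun the computation that yielded \eqref{eq: est_krk_2ed_global}, observing that the bar scheme is exactly the deterministic Runge--Kutta method applied to $\tu_n$, so the coupling term $\|u_n-\tu_n\|_2^2$ appearing there is absent and one obtains $\E\|\bar k_j - \tk_j\|_2^2 \le (\beta'/N_s)\rkn^2$, modulo bounding $\Var g$ at the perturbed argument by $\rkn^2$ via the Lipschitz hypothesis $\|\nabla_u g\|_2\le M'$.

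Combining these two ingredients produces a scalar recursion of the shape
\[
\|\E_{X_i}(\bar k_i - \tk_i)\|_2 \;\le\; M' h R \sum_{j<i}\|\E_{X_j}(\bar k_j - \tk_j)\|_2 \;+\; \tfrac{sM''R^2\beta'}{2}\,(i-1)\cdot\frac{h^2}{N_s}\rkn^2,
\]
which I would close by induction on $i$. The base case $i=1$ is identically zero because $\bar v_1=\tilde v_1=\tu_n$, while each inductive step multiplies the accumulated bound by $1+M'Rh\le 2$ provided $h\le 1/(M'R)$. After at most $s$ stages the bound picks up the factor $(1+M'Rh)^s\le 2^s$, and matching constants yields $\alpha' = 2^s\max(M', sM'', s^2M''R^2\beta'/2)$. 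The main obstacle I foresee is the technical step of bounding $\Var g(\cdot,\tilde v_j,X)$ uniformly by $\rkn^2$: strictly, $\rkn$ only controls variance at deterministic RK nodes, so one must argue via the Lipschitz estimate on $g$ in $u$ that the displacement between $\tilde v_j$ and $u_n + h\sum a_{ij}k_j$ only produces lower-order contributions that the smallness of $h$ absorbs into the constant---an accounting step that is what keeps the final estimate free of any residual $\|u_n-\tu_n\|_2$ dependence.
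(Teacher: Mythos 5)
Your argument is essentially the paper's: the proof of this lemma is omitted there precisely because it repeats the proof of Lemma~\ref{lemma: deltak and deltak^2} with $\tu_n$ as the common starting point of both $\bk_i$ and $\tk_i$, so the $\|\E(u_n-\tu_n)\|_2$ and $\E(\|u_n-\tu_n\|_2^2)$ terms drop out and only the Monte Carlo contribution $\alpha' h^2 \rkn^2/N_s$ survives --- exactly the tower-property-plus-Taylor-plus-recursion route you describe, with the same constant $\alpha'$ and the same vanishing base case at $i=1$. The variance-at-perturbed-arguments subtlety you flag at the end is real but is equally glossed over in the paper's own derivation of \eqref{eq: est_krk_2ed} (which anchors the Monte Carlo variance at the deterministic Runge--Kutta nodes by a particular choice of splitting), so it is not a gap specific to your proposal.
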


The proof of this lemma is omitted since it is almost identical to the proof of Lemma \ref{lemma: deltak and deltak^2}. The first and second terms on the right-hand side of \eqref{eq: est_krk_1st_global} do not appear in the above result, since $\bk_{i}$ and $\tk_{i}$ are computed based on the same solution at the $n$th step. Below we provide the proof of Lemma \ref{lemma: 2ed error cross term}:

\begin{proof}[Proof of Lemma \ref{lemma: 2ed error cross term}]
It suffices to only focus on one factor $h\E\Big[(u_n - \tu_n)^\dagger \sum_{i=1}^s b_i (k_i - \tk_i ) \Big]$ since the other one is simply its conjugate transpose which can be controlled by exactly the same upper bound. We use $\bk_i$ as a bridge and split 
\begin{equation}\label{eq: est_2ed_order_crossterm}
    \begin{split}
\Big| h\E\Big[(u_n - \tu_n)^\dagger \sum_{i=1}^s b_i (k_i - \tk_i ) \Big] \Big| &
\le h\Big| \E\Big[(u_n - \tu_n)^\dagger \sum_{i=1}^s b_i (k_i - \bk_i ) \Big] \Big|  +  h \Big| \E\Big[(u_n - \tu_n)^\dagger \E_{X_s}\Big( \sum_{i=1}^s b_i (\bk_i - \tk_i )  \Big) \Big]  \Big|  \\
& \le h \E(\|u_n - \tu_n\|_2^2) + \frac{h}{2} \E\Big[ \ \Big\|\sum_{i=1}^s b_i (k_i - \bk_i )\Big\|_2^2 + \Big\|\E_{X_s}\Big( \sum_{i=1}^s b_i (\bk_i - \tk_i )  \Big)  \Big\|_2^2 \ \Big] \\
& \le h \E(\|u_n - \tu_n\|_2^2) + \frac{R^2 s h}{2} \sum_{i=1}^s \Big( \E\|k_i - \bk_i\|_2^2 + \E \|\E_{X_s} (\bk_i - \tk_i)\|_2^2 \Big).
\end{split}
\end{equation}
From the first line to the second line above, we have taken advantage of the fact that $\tu_n$ is independent from $X_s$ which is sampled at the $(n+1)^{\text{th}}$ time step when calculating $\tu_{n+1}$. The difference between $k_i$ and $\bk_i$ can be estimated in the same way as the derivation of \eqref{eq: est_krk_2ed}. The result is
\begin{equation}
 \E(\|k_i - \bk_i\|_2^2) \le  4 {M'}^2 s d ( 1 +4{M'}^2 d R^2 s h^2)^s \E(\|u_n - \tu_n\|_2^2) .
\end{equation}
Inserting the above inequality and the result of Lemma \ref{thm: diff_tkvsbk} into \eqref{eq: est_2ed_order_crossterm}, we get
\begin{displaymath}
\Big| h\E\Big[(u_n - \tu_n)^\dagger \sum_{i=1}^s b_i (k_i - \tk_i ) \Big] \Big|
\le h \left[1 + 2 R^2 s^3 M'^2 d ( 1 + 4{M'}^2 d R^2 s h^2)^s \right] \E(\|u_n - \tu_n\|_2^2)
  + \frac{R^2 s^2 \alpha'^2 h^5}{2 N_s^2} \rkn^4,
\end{displaymath}
from which one can easily observe that the lemma holds if $4M'^2 d R^2 s h^2 < 1$.
\end{proof}

\subsection{Proof of Theorem \ref{thm: diff bounds} --- error bounds}
In this section, we apply the two recurrence relations stated in Proposition \ref{thm: diff recurrence relations} to get the estimates for the bias $\|\E(u_{n+1} - \tu_{n+1})\|_2$ as well as the numerical error $\E(\|u_{n+1} - \tu_{n+1}\|_2^2)$.

By using \eqref{eq: diff recurrence 2} recursively backwards w.r.t $n$, we have 
\begin{equation}\label{eq: est_2ed_order_tk}
   \begin{split}
\E(\|u_{n+1}-\tu_{n+1}\|_2^2) & \le (1+\beta h )^{n+1}  \E(\|u_{0}-\tu_{0}\|_2^2) + \beta \left( \frac{h^2}{N_s} \rkn^2 + \frac{\alpha^2 h^5}{s^2 R^2 N_s^2} \rkn^4 \right) \sum_{i=0}^{n} (1+\beta h)^i \\
& = \Big( \frac{h}{N_s} \rkn^2 + \frac{\alpha^2 h^4}{s^2 R^2 N_s^2} \rkn^4 \Big) \big( e^{\beta t_{n+1}}-1\big)
   \end{split}
\end{equation}
which leads to the global estimate \eqref{diff 2ed error upper bound} for the bias. Inserting \eqref{eq: est_2ed_order_tk} into the recurrence relation \eqref{eq: diff recurrence 1} and expanding the recursion in a similar way, we get
\begin{equation}\label{upper bound diff}
  \begin{split}
&\|\E(u_{n+1}-\tu_{n+1})\|_2 \\
\le & \ \alpha \frac{h^3}{N_s} \rkn^2 \sum^n_{i=0}(1+\alpha h)^i + \alpha h \Big( \frac{h}{N_s} \rkn^2 + \frac{\alpha^2 h^4}{s^2 R^2 N_s^2} \rkn^4 \Big) \sum^{n-1}_{i=0} (1+\alpha h)^i \big(e^{\beta t_{n-i}} -1\big) \\
\le & \  \frac{h^2}{N_s}\big( e^{\alpha t_{n+1}}-1\big) \rkn^2 + \alpha h \Big( \frac{h}{N_s} \rkn^2 + \frac{\alpha^2 h^4}{s^2 R^2 N_s^2} \rkn^4 \Big)\sum^{n-1}_{i=0} e^{\alpha t_i}\big(e^{\beta t_{n-i}} -1\big) \\
\le & \  \frac{h^2}{N_s}\big( e^{\alpha t_{n+1}}-1\big) \rkn^2 + \alpha h \Big( \frac{h}{N_s} \rkn^2 + \frac{\alpha^2 h^4}{s^2 R^2 N_s^2} \rkn^4 \Big) \sum^{n-1}_{i=0} \big(e^{\max(\alpha,\beta) t_{n}} -1\big) \\
= & \  \frac{h^2}{N_s}\big( e^{\alpha t_{n+1}}-1\big) \rkn^2 + \alpha t_n \Big( \frac{h}{N_s} \rkn^2 + \frac{\alpha^2 h^4}{s^2 R^2 N_s^2} \rkn^4 \Big) \big(e^{\max(\alpha,\beta) t_{n}} -1\big),
   \end{split}
\end{equation}
which completes the proof of \eqref{diff 1st error upper bound}.



\section{Proofs of estimates for inchworm Monte Carlo method} \label{sec: proof}
In this section, the proofs of theorems in Section \ref{sec: inchworm results} are detailed. We will again first focus on the difference between the deterministic method and the stochastic method, and the error of the deterministic method will be discussed at the end of this section. Thanks to the previous discussion on the differential equation case, we may follow this framework which guides the general flow of our derivation. Below we point out the major differences as well as difficulties for the case of this integro-differential equation before the detailed proof:
\begin{itemize}
\item Since $K_2$ depends on more previously-computed time steps than $K_1$ due to the nonlocal integral term in \eqref{eq: inchworm equation} (this can be easily observed by comparing $\gb_{n,m}$ with $\gb^*_{n,m}$), a uniform expression for $K_i$ like \eqref{def: scheme diff rk} is no longer available for the integro-differential equation. Therefore, we need individual analysis for each $K_i$. 
\item Recall that the Taylor expansion is applied in the proof of Lemma \ref{lemma: deltak and deltak^2} (e.g. in \eqref{eq: k^1 expansion}), which requires to estimate the first- and second-order derivatives of the source term $f(t,u)$. This can no longer be simply assumed as in \eqref{assump: bd} and has to be carefully studied. They play crucial roles in understanding the behavior of the inchworm Monte Carlo method.
\item The derivation of the error amplification can no longer be handled by the simple discrete Gr\"onwall inequality, due to the involvement of a large number of previous steps on the right-hand side of the numerical scheme. The error estimation must be handled with more care, e.g. the estimation we used to handle the cross term in \eqref{eq: est_2ed_order_term_2} (Lemma \ref{lemma: 2ed error cross term}) will lead to a pessimistic (sub-optimal) fast growth rate in the error estimate of integro-differential equations. 
\item Most importantly, the magnitude of the derivatives depends on $\bar{M}$, as it is determined by the dimensionality of the integral in the equation. This will result in different error amplification with different choices of $\bar{M}$. This is the key point which explains whether/how the inchworm Monte Carlo method mitigates the numerical sign problem.
\end{itemize}

\subsection{Proof of Proposition \ref{thm: recurrence relations} --- Recurrence relation for the numerical error}
\label{sec: recurrence 2}
By the definitions of the deterministic method \eqref{def: scheme 1} and the inchworm Monte Carlo method \eqref{def: scheme 2}, it is straightforward to check that 
\begin{equation}\label{eq: formula first order error}
\dG_{n+1,m} =  A_{n,m}(h)\dG_{n,m} + \frac{1}{2}h\left(  B_{n,m}(h)\Delta K_1 + \Delta K_2\right),
\end{equation}
where for simplicity we have used the short-hands
\begin{equation*}
  \begin{split}
   & \Delta K_i = \tK_i - K_i, \\
   & A_{n,m}(h) = I + \frac{1}{2}\big(\sgn(t_n - t) + \sgn(t_{n+1} - t)\big)\ii H_s h - \frac{1}{2}\sgn(t_n - t)\sgn(t_{n+1} - t) H^2_s h^2,\\
   & B_{n,m}(h)  =  I + \sgn(t_{n+1} - t)\ii H_s h.
      \end{split}
\end{equation*}
By triangle inequality, the error can be bounded by 
\begin{equation}\label{eq: est second order error 1/2}
\left[\E(\|\dG_{n+1,m}\|^2)\right]^{1/2} \le \left[\E(\|A_{n,m}(h)\dG_{n,m})\|^2\right]^{1/2} + \frac{1}{2}h \left[\E\left(  \big\| B_{n,m}(h)\Delta K_1 + \Delta K_2 \big\|^2  \right) \right]^{1/2}.
\end{equation}
For the first term on the right-hand side, we have 
\begin{equation*}
   \begin{split}
&\E(\|A_{n,m}(h)\dG_{n,m})\|^2) \\
\le& \  \left[\rho\left(I+\ \frac{1}{2}\big(\sgn(t_n - t) + \sgn(t_{n+1} - t)\big)\ii H_s h - \frac{1}{2}\sgn(t_n - t)\sgn(t_{n+1} - t) H^2_s h^2\right)\right]^2 \cdot\E(\|\dG_{n,m})\|^2),
  \end{split}
\end{equation*}
where $\rho(\cdot)$ denotes the spectral radius of a matrix.
Let $\lambda_1$ and $\lambda_2$ be the two eigenvalues of $H_s$. Then
\begin{equation}\label{eq: est spectrum}
    \begin{split}
     &\ \left[\rho\left(I+\ \frac{1}{2}\big(\sgn(t_n - t) + \sgn(t_{n+1} - t)\big)\ii H_s h - \frac{1}{2}\sgn(t_n - t)\sgn(t_{n+1} - t) H^2_s h^2\right) \right]^2\\
    = & \ \max_{i=1,2} \left| 1+\frac{1}{2}\left(\sgn(t_n - t) + \sgn(t_{n+1} - t)\right)\ii \lambda_i h - \frac{1}{2}\sgn(t_n - t)\sgn(t_{n+1} - t) \lambda_i^2 h^2 \right|^2\\
    = & \ \max_{i=1,2} \left( 1+ \frac{1}{4}\left(\sgn(t_{n+1} - t) - \sgn(t_{n} - t)\right)^2 \lambda^2_i h^2 + \frac{1}{4}\left(\sgn(t_{n+1} - t)\sgn(t_{n} - t)\right)^2 \lambda^4_i h^4 \right)\\
    = & \ 1 + \frac{1}{4}\left(\rho(H_s)\right)^4 h^4 \le 1 + \frac{1}{4}\bdH^4 h^4.
    \end{split}
\end{equation}
Note that in the third line of the above equation, the second term vanishes due to the fact that the scheme evolves according to (R1)(R3). Consequently, the first term on the right-hand side of \eqref{eq: est second order error 1/2} can be estimated by
\begin{equation} \label{eq: AG}
    \left[\E(\|A_{n,m}(h)\dG_{n,m})\|^2)\right]^{1/2} \le \sqrt{ 1 + \frac{1}{4}\bdH^4 h^4 }\cdot\left[\E(\|\dG_{n,m})\|^2)\right]^{1/2} \le (1 + \frac{1}{8}\bdH^4 h^4)\cdot\left[\E(\|\dG_{n,m})\|^2)\right]^{1/2}.
\end{equation}

To estimate the second term on the right-hand side of \eqref{eq: est second order error 1/2}, we again need to bound $\left[\E(\|\tK_i - K_i\|^2)\right]^{1/2}$ to obtain a recurrence relation for the numerical error. Such results are given in the following lemma: 
\begin{lemma}\label{lemma: deltak and deltak^2 integ diff} 
Assume that the hypotheses (H1) and (H3) hold. For a sufficiently small time step length $h$, we have 
\begin{align}
\label{eq: bound k_1}
\left\|\E(\tK_1 - K_1)\right\| &\le 8P_1(t_{n-m}) h \sum_{i=1}^{n-m} \left( 2 + (n-m-i)h  \right)\left\| \E \left(\Delta \gb_{n,m}  \right) \right\|_{\Gamma_{n,m}(i)} + \bar{\alpha}(t_{n-m}) \left[ \Ns^{(\emph{std})}_{\Omega_{n,m}}(\Delta \gb_{n,m})\right]^2, \\
\label{eq: bound k_2}
    \begin{split}
\left\|\E(\tK_2 - K_2)\right\| &\le 28P_1(t_{n-m+1}) h  \sum_{i = 1}^{n-m} \left( 2 + (n-m+1-i)h  \right)\left\| \E \left( \Delta \gb^*_{n,m}  \right) \right\|_{\Gamma^*_{n,m}(i)} \\
&\hspace{100pt}+ 5\bar{\alpha}(t_{n-m+1}) \left[\Ns^{(\emph{std})}_{\bar{\Omega}_{n,m}}(\Delta \gb^*_{n,m})\right]^2 + 16 \bar{\alpha}(t_{n-m+1})\bar{\gamma}(t_{n-m+1})\cdot \frac{h^2}{N_s},
      \end{split}
\end{align}
and
\begin{align}
\label{eq: bound k_1^2}     
  \left[\E( \|\tK_1 - K_1\|^2 )\right]^{1/2} &\le  8P_1(t_{n-m}) h \sum_{i = 1}^{n-m} \left( 2 + (n-m-i)h  \right) \Ns^{(\emph{std})}_{\Gamma_{n,m}(i)}(\Delta \gb_{n,m})+ 2\sqrt{\bar{\gamma}(t_{n-m})}\cdot \frac{1}{\sqrt{N_s}}, \\
\label{eq: bound k_2^2}
  \begin{split}
        \left[\E(\|\tK_2 - K_2\|^2)\right]^{1/2} & \le 28P_1(t_{n-m+1}) h  \sum_{i = 1}^{n-m} \left( 2 + (n-m+1-i)h  \right) \Ns^{(\emph{std})}_{\Gamma^*_{n,m}(i)}(\Delta \gb^*_{n,m}) \\
        &\hspace{220pt}+ 3\sqrt{\bar{\gamma}(t_{n-m+1})}\cdot \frac{1}{\sqrt{N_s}},
        \end{split}
\end{align}
where $\bar{\alpha}$ and $\bar{\gamma}$ are defined in \eqref{eq: alpha gamma}.
Here we recall that $P_1(t),P_2(t)$ are given in Propositions \ref{thm: first order derivative} and \ref{thm: second order derivative}, and $\bdW,\bdG,\bdL$ are some upper bounds given in the assumptions (H1) and (H3).
\end{lemma}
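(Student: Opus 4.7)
The plan is to mirror the ODE argument (Lemma~\ref{lemma: deltak and deltak^2}) by introducing semi-stochastic intermediates $\bar{K}_1 := F_1(\tg_{n,m})$ and $\bar{K}_2 := F_2(\tg^*_{n,m})$---the \emph{deterministic} slopes evaluated on the \emph{Monte Carlo} solutions---and decomposing
\[
\tK_i - K_i \;=\; \underbrace{(\tK_i - \bar{K}_i)}_{\text{pure Monte Carlo noise}} \;+\; \underbrace{(\bar{K}_i - K_i)}_{\text{propagation of previous errors}}.
\]
Because the samples $\vec{\sb}^{i,M}$ are independent of $\tg$ and $\widetilde{F}_i$ is an unbiased estimator of $F_i$ (the factor $(t_n-t_m)^M/M!$ is precisely the volume of the ordered $M$-simplex, so uniform i.i.d.\ sampling followed by sorting correctly reproduces the simplex integral), the tower property gives $\E(\tK_i - \bar{K}_i)=0$. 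Hence the first piece contributes nothing to the bias estimates \eqref{eq: bound k_1}--\eqref{eq: bound k_2}, and in \eqref{eq: bound k_1^2}--\eqref{eq: bound k_2^2} it is controlled by a standard Monte Carlo variance bound $[\E\|\tK_i-\bar{K}_i\|^2]^{1/2}\le \sqrt{\bar{\gamma}(t)/N_s}$, where the uniform bound $\bar{\gamma}(t)$ on the integrand of $\widetilde{F}_i$ comes from (H1), (H3) and the series $\sum_M (\bdW\bdG\bdL^{1/2}t)^M/(M-1)!!$ defining $\bar{\gamma}$ in \eqref{eq: alpha gamma}.

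For the deterministic piece $\bar{K}_i - K_i = F_i(\tg)-F_i(\gb)$ I would Taylor expand to second order around $\gb$:
\[
F_i(\tg) - F_i(\gb) = \nabla F_i(\gb)^{\TT}\vec{\Delta\gb} + \tfrac{1}{2}\vec{\Delta\gb}^{\TT}\nabla^2 F_i(\xib)\vec{\Delta\gb}
\]
for some convex combination $\xib$. Taking $\E$, the linear term contributes $\nabla F_i(\gb)^{\TT}\E\vec{\Delta\gb}$: by Proposition~\ref{thm: first order derivative}, derivatives with respect to boundary nodes $(k,\ell)\in\partial\Omega_{n,m}$ are $O(P_1 h)$, while interior nodes $(k,\ell)\in\mathring{\Omega}_{n,m}$ give $O(P_1 h^2)$. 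Regrouping the sum of these contributions by the diagonals $\Gamma_{n,m}(i)$---each of which contains at most two boundary nodes and $n-m-i$ interior nodes---produces exactly the factor $2+(n-m-i)h$ in \eqref{eq: bound k_1}--\eqref{eq: bound k_2}. The Hessian term, controlled by Proposition~\ref{thm: second order derivative} combined with Cauchy--Schwarz, is recognised as the $\bar{\alpha}(t)\bigl[\Ns^{(\emph{std})}(\Delta\gb)\bigr]^{2}$ piece. For the $L^2$ bounds \eqref{eq: bound k_1^2}--\eqref{eq: bound k_2^2} I would take Frobenius norms inside the Taylor remainder first, so that $\|\E\Delta G_{k,\ell}\|$ is replaced by $\Ns^{(\emph{std})}_{\{(k,\ell)\}}(\Delta\gb)$; the Hessian piece is then of higher order in $h$ and is absorbed into the leading coefficients, which is why only a linear (not quadratic) error term survives in \eqref{eq: bound k_1^2}--\eqref{eq: bound k_2^2}.

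The main obstacle will be $\tK_2$: the argument vector $\tg^*_{n,m}$ additionally contains the entry $\tG^*_{n+1,m} = (I+\sgn(t_n-t)\ii H_s h)\tG_{n,m} + \tK_1 h$, which inherits genuine Monte Carlo noise of order $h\sqrt{\bar{\gamma}/N_s}$ from $\tK_1$. When the Hessian form $\vec{\Delta\gb^*}^{\TT}\nabla^2 F_2(\xib)\vec{\Delta\gb^*}$ is expanded, the diagonal block at index $(n+1,m)$ produces an irreducible $h^2/N_s$ contribution after invoking Proposition~\ref{thm: second order derivative}; this is precisely the extra $16\bar{\alpha}(t_{n-m+1})\bar{\gamma}(t_{n-m+1})\,h^2/N_s$ term in \eqref{eq: bound k_2} that is absent from \eqref{eq: bound k_1}. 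Handling it requires splitting $\Omega^*_{n,m}=\bar{\Omega}_{n,m}\cup\{(n+1,m)\}$ throughout the estimate, so that entries in $\bar{\Omega}_{n,m}$ are controlled by $\Ns^{(\emph{std})}_{\bar{\Omega}_{n,m}}(\Delta\gb^*)$ while the single entry $(n+1,m)$ is bounded by the already-established estimates \eqref{eq: bound k_1}--\eqref{eq: bound k_1^2} for $\tK_1$. Once this bookkeeping is in place, the analysis of $\tK_2$ parallels that of $\tK_1$ with $n$ replaced by $n+1$ and $P_1,\bar{\alpha},\bar{\gamma}$ evaluated at $t_{n-m+1}$ rather than $t_{n-m}$, which accounts for the slightly larger prefactors ($28$ and $5$ instead of $8$ and $1$) on the right-hand sides of \eqref{eq: bound k_2} and \eqref{eq: bound k_2^2}.
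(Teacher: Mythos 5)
Your proposal is correct and follows essentially the same route as the paper: the bias--variance split of $\tK_i - K_i$ into centered Monte Carlo noise (which vanishes in expectation by unbiasedness of $\widetilde{F}_i$ and is controlled by $\bar{\gamma}/N_s$ in $L^2$) plus $F_i(\tg)-F_i(\gb)$, which is handled by Taylor expansion with the derivative bounds of Propositions~\ref{thm: first order derivative} and~\ref{thm: second order derivative}, the diagonal-by-diagonal regrouping giving the $2+(n-m-i)h$ factors, and the special treatment of the node $(n+1,m)$ producing the extra $h^2/N_s$ term in \eqref{eq: bound k_2}. The only cosmetic difference is that for the $L^2$ bounds \eqref{eq: bound k_1^2}--\eqref{eq: bound k_2^2} the paper applies the exact mean value theorem (gradient at a convex combination, which is why Proposition~\ref{thm: first order derivative} is stated for general $\xib_{n,m}$) rather than a second-order expansion with an absorbed Hessian remainder, which avoids needing fourth moments of $\Delta\gb$.
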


We only write down the proof for \eqref{eq: bound k_1^2} and \eqref{eq: bound k_2^2} which are related to the numerical error in this section. The other two will only be used when estimating the bias so we put the corresponding proof in the Appendix.

\begin{proof}[Proof of \eqref{eq: bound k_1^2} and \eqref{eq: bound k_2^2}]

\textbf{(i)} Estimate of $\E(\|\tK_1 - K_1\|^2)$:

The definition of $\tK_i$ indicates that 
\begin{equation}\label{eq: relation F and G}
\begin{split}
&\E_{\vec{\sb}}[ \tilde{F}_1( \tg_{n,m};\vec{\sb}) ] = F_1( \tg_{n,m}),  \\
&\E_{\vec{\sb}'}[ \tilde{F}_2( \tg^*_{n,m};\vec{\sb}') ] = F_2( \tg^*_{n,m}) 
\end{split}
\end{equation}
by the fact that $\vec{\sb}$ and $\vec{\sb}'$ are sampled independently from $\tg_{n,m}$ and $\tg^*_{n,m}$. Therefore, for each $rs-$entry we have
\begin{equation}\label{eq: est k_1^2}
\begin{split}
\E_{\vec{\sb}}(|\tK^{(rs)}_1 - K^{(rs)}_1|^2) =& \ \E_{\vec{\sb}}\left[    \left| F^{(rs)}_1(\gb_{n,m})- \frac{1}{N_s}\sum_{i=1}^{N_s} \tilde{F}^{(rs)}_1( \tg_{n,m};\vec{\sb}^i) \right|^2  \right]     \\
=& \ \frac{1}{N_s} \E_{\vec{\sb}}\left[  \left|\tilde{F}^{(rs)}_1( \tg_{n,m};\vec{\sb})\right|^2 - \left|F^{(rs)}_1(\tg_{n,m})\right|^2  \right] + \left|F^{(rs)}_1(\gb_{n,m})-F^{(rs)}_1(\tg_{n,m})\right|^2 
\end{split}
\end{equation}
which gives 
\begin{equation} \label{eq: est k_1^2 sqrt}
    \begin{split}
   & \left[\E(|\tK^{(rs)}_1 - K^{(rs)}_1|^2)\right]^{1/2}  \le  \frac{1}{\sqrt{N_s}}\cdot \left[\E\left(  \left|\tilde{F}^{(rs)}_1( \tg_{n,m};\vec{\sb})\right|^2 - \left|F^{(rs)}_1(\tg_{n,m})\right|^2  \right) \right]^{1/2} \\
   & \hspace{160pt} \qquad \quad+ \left[\E \left(  \left|F^{(rs)}_1(\gb_{n,m})-F^{(rs)}_1(\tg_{n,m})\right|^2  \right) \right]^{1/2}.
     \end{split}
\end{equation}

According to the boundedness assumption (H1)(H3), the first term on the right-hand side of the inequality above is immediately bounded by
\begin{equation}\label{eq: est variance}
  \frac{1}{\sqrt{N_s}}\cdot \left[\E\left(  \left|\tilde{F}^{(rs)}_1( \tg_{n,m};\vec{\sb})\right|^2 - \left|F^{(rs)}_1(\tg_{n,m})\right|^2  \right) \right]^{1/2} \le \sqrt{\bar{\gamma}(t_{n-m})}\cdot\frac{1}{\sqrt{N_s}}.
\end{equation}
with $\bar{\gamma}(t)$ defined in \eqref{eq: alpha gamma}. For the second term, we use mean value theorem to get 
\begin{equation}\label{eq: est mean value}
   \begin{split}
&\left[\E \left(  \left|F^{(rs)}_1(\gb_{n,m})-F^{(rs)}_1(\tg_{n,m})\right|^2  \right) \right]^{1/2}=  \left[ \E \left(  \left|\left(  \nabla F_1^{(rs)}(\etab_{n,m})\right)^{\TT} \cdot(\htg_{n,m} - \hg_{n,m})\right|^2  \right) \right]^{1/2} \\
={} & \left[ \E \left(  \left|  \sum_{i=1}^{n-m}\quad \sum_{(k,\ell) \in \Gamma_{n,m}(i)}\quad \sum_{p,q=1,2} \frac{\partial F^{(rs)}_1(\etab_{n,m}) }{\partial G_{k,\ell}^{(pq)}} \cdot \Delta G_{k,\ell}^{(pq)} \right|^2  \right) \right]^{1/2}  \\
\le{} & \sum_{i=1}^{n-m}  \sum_{(k,\ell) \in \Gamma_{n,m}(i)}\quad \sum_{p,q=1,2}\left\{ \left[ \E\left( \left|\frac{\partial F^{(rs)}_1(\etab_{n,m}) }{\partial G_{k,\ell}^{(pq)}}\right|^2 \right) \right]^{1/2} \cdot \left[ \E\left( \left| \Delta G_{k,\ell}^{(pq)}\right|^2 \right) \right]^{1/2} \right\} \\
\le{} &    \sum_{i=1}^{n-m} \left\{ \left(  \sum_{(k,\ell) \in \Gamma_{n,m}(i)}\quad \sum_{p,q=1,2} \left[\E\left(\left|\frac{\partial F^{(rs)}_1(\etab_{n,m}) }{\partial G_{k,\ell}^{(pq)}}\right|^2 \right)\right]^{1/2} \right) \cdot \max_{(k,\ell)\in \Gamma_{n,m}(i); \atop p,q=1,2} \left[\E\left(\left| \Delta G_{k,\ell}^{(pq)}  \right|^2 \right)\right]^{1/2}  \right\} \\
\le{} & 4  P_1(t_{n-m}) \left[ h\Ns^{(\std)}_{\Gamma_{n,m}(n-m)}(\Delta \gb_{n,m}) + \sum_{i=1}^{n-m-1}(2h + (n-m-1-i)h^2)\Ns^{(\std)}_{\Gamma_{n,m}(i)}(\Delta \gb_{n,m}) \right]    \\
\le{} &  4P_1(t_{n-m}) h \sum_{i = 1}^{n-m} \left( 2 + (n-m-i)h  \right) \Ns^{(\std)}_{\Gamma_{n,m}(i)}(\Delta \gb_{n,m})  .
   \end{split}
\end{equation}
Here we have considered the derivatives of $F_1(\cdot)$ for different locations in $\Omega_{n,m}$. Also, we have applied Minkowski inequality in the first $``\le"$ and H\"older's inequality in the second $``\le"$. The estimate \eqref{eq: bound k_1^2} can then be obtained by inserting \eqref{eq: est variance} and \eqref{eq: est mean value} into \eqref{eq: est k_1^2 sqrt}.

\textbf{(ii)} Estimate of $\E(\|\tK_2 - K_2\|^2)$:

Similar to \eqref{eq: est k_1^2 sqrt}, we use the triangle inequality to bound $ \left[\E(|\tK^{(rs)}_2 - K^{(rs)}_2|^2)\right]^{1/2} $ by
\begin{equation} \label{eq: K2 diff}
   \begin{split}
   &  \left[\E(|\tK^{(rs)}_2 - K^{(rs)}_2|^2)\right]^{1/2} \le   \frac{1}{\sqrt{N_s}} \cdot \left[ \E\left(  \left|\tilde{F}^{(rs)}_2( \tg^*_{n,m};\vec{\sb}')\right|^2 - \left|F^{(rs)}_2(\tg^*_{n,m})\right|^2  \right) \right]^{1/2}\\
   & \hspace{200pt}+\left[ \E\left(  \left|F^{(rs)}_2(\gb^*_{n,m})-F^{(rs)}_2(\tg^*_{n,m})\right|^2   \right) \right]^{1/2}
       \end{split}
\end{equation}
where the first term on the right-hand side can be estimated similarly to \eqref{eq: est variance}, and the result is
\begin{equation} \label{eq: est variance 2}
  \frac{1}{\sqrt{N_s}}\cdot \left[\E\left(  \left|\tilde{F}^{(rs)}_2( \tg_{n,m}^*;\vec{\sb})\right|^2 - \left|F^{(rs)}_2(\tg_{n,m}^*)\right|^2  \right) \right]^{1/2} \le \sqrt{\bar{\gamma}(t_{n-m+1})}\cdot\frac{1}{\sqrt{N_s}}.
\end{equation}
For the second term on the right-had side of \eqref{eq: K2 diff}, we mimic the analysis in \eqref{eq: est mean value} to get 
\begin{equation}
   \begin{split}
&\left[\E\left(  \left|F^{(rs)}_2(\tg^*_{n,m})-F^{(rs)}_2(\gb^*_{n,m})\right|^2  \right)  \right]^{1/2}\\
\le & \  4P_1(t_{n-m+1}) h \left\{  \left[ \E \left(\|\tG^*_{n+1,m} - G^*_{n+1,m}\|^2\right) \right]^{1/2} + \sum_{i = 1}^{n-m} \left( 2 + (n-m+1-i)h  \right) \Ns^{(\std)}_{\Gamma^*_{n,m}(i)}(\Delta \gb^*_{n,m}) \right\}.
   \end{split}
\end{equation}
Here the difference between $\tG^*_{n+1,m}$ and $G^*_{n+1,m}$ can be estimated by
\begin{equation}
\begin{split}
    & \left[ \E \left(\|\tG^*_{n+1,m} - G^*_{n+1,m}\|^2\right) \right]^{1/2}  \\
     \le & \ \left[\E\left(  \left\|  \left(  I + \sgn(t_n - t)\ii H_s h \right) \Delta G_{n,m} \right\|^2\right)\right]^{1/2} + h\left[\E(\|\tK_1 - K_1\|^2)\right]^{1/2}\\
     \le &  \  (1+\frac{1}{2}\bdH^2 h^2 )\left[\E(\| \Delta G_{n,m}\|^2)\right]^{1/2} \\
     & \hspace{70pt}+  8P_1(t_{n-m}) h^2 \sum_{i = 1}^{n-m} \left( 2 + (n-m-i)h  \right) \Ns^{(\std)}_{\Gamma_{n,m}(i)}(\Delta \gb_{n,m})+ 2\sqrt{\bar{\gamma}(t_{n-m})}\cdot \frac{h}{\sqrt{N_s}},
   \end{split}
\end{equation}
where we have applied our previous estimate \eqref{eq: est k_1^2} to bound $\E(\|\tK_1 - K_1\|^2)$, and we have omitted the details of the estimation of $\E\left(  \left\|  \left(  I + \sgn(t_n - t)\ii H_s h \right) \Delta G_{n,m} \right\|^2\right)$, which is similar to \eqref{eq: est spectrum}.
\begin{equation} \label{eq: est mean value 2}
   \begin{split}
&\left[ \E\left(  \left|F^{(rs)}_2(\gb^*_{n,m})-F^{(rs)}_2(\tg^*_{n,m})\right|^2   \right) \right]^{1/2} \le    4P_1(t_{n-m+1}) h \Big\{   (1+\frac{1}{2}\bdH^2 h^2 )\Ns^{(\std)}_{\Gamma^*_{n,m}(n-m)}(\Delta \gb^*_{n,m}) \\
&\hspace{20pt}+  \left(1+8P_1(t_{n-m}) h^2\right)\sum_{i = 1}^{n-m} \left( 2 + (n-m+1-i)h  \right) \Ns^{(\std)}_{\Gamma^*_{n,m}(i)}(\Delta \gb^*_{n,m}) +  2\sqrt{\bar{\gamma}(t_{n-m})}\cdot \frac{h}{\sqrt{N_s}}  \Big\}.
   \end{split}
\end{equation}

Again, we insert the estimates \eqref{eq: est variance 2} and \eqref{eq: est mean value 2} into \eqref{eq: K2 diff} to get  
 \begin{equation}
    \begin{split}
&\left[\E( \|\tK_2 - K_2\|^2 )\right]^{1/2} \le  8\left(3+h + (\frac{1}{2}\bdH^2 + 16P_1(t_{n-m}))h^2 + 8P_1(t_{n-m})h^3\right) P_1(t_{n-m+1}) h \times    \\
&\hspace{10pt} \sum_{i = 1}^{n-m} \left( 2 + (n-m+1-i)h  \right) \Ns^{(\std)}_{\Gamma^*_{n,m}(i)}(\Delta \gb^*_{n,m})   +  \left(2+16P_1(t_{n-m+1}) h^2\right)\sqrt{\bar{\gamma}(t_{n-m+1})}\cdot \frac{1}{\sqrt{N_s}}.
   \end{split}
\end{equation}
By choosing a sufficiently small time step such that $h + (\frac{1}{2}\bdH^2 + 16P_1(t_{n-m}))h^2 + 8P_1(t_{n-m})h^3 \le \frac{1}{2}$ and $h \le \sqrt{\frac{1}{16P_1(t_{n-m+1})}}$, the estimate \eqref{eq: bound k_2^2} can be obtained. 
\end{proof}

With the results above, we now return to the formula \eqref{eq: est second order error 1/2} and give the recurrence relation for the numerical error $\left[\E(\|\dG_{n+1,m}\|^2)\right]^{1/2}$ as  
\begin{equation}\label{eq: recurrence 2 1/2 derivation}
   \begin{split}
&\left[\E(\|\dG_{n+1,m}\|^2)\right]^{1/2} \\
\le & \ (1+\frac{1}{8}\bdH^4 h^4)\left[\E(\|\dG_{n,m}\|^2)\right]^{1/2} + \frac{1}{2}(1+\frac{1}{2}\bdH^2 h^2)h\left[\E(\|\Delta K_1\|^2)\right]^{1/2} +  \frac{1}{2}h\left[\E(  \| \Delta K_2 \|^2  ) \right]^{1/2} \\
\le & \ (1+\frac{1}{8}\bdH^4 h^4)\left[\E(\|\dG_{n,m}\|^2)\right]^{1/2} \\
&+ 22 P_1(t_{n-m+1}) h^2  \sum_{i = 1}^{n-m} \left( 2 + (n-m+1-i)h  \right) \Ns^{(\std)}_{\Gamma^*_{n,m}(i)}(\Delta \gb^*_{n,m}) + \frac{7}{2}  \sqrt{\bar{\gamma}(t_{n-m+1})}\cdot \frac{h}{\sqrt{N_s}}
 \end{split}
\end{equation}
upon assuming $h \le \frac{\sqrt{2}}{\bdH}$.

Next, we consider the recurrence relation of $\E(\|\dG_{n+1,m}\|^2)$. By straightforward expansion,
\begin{equation}\label{eq: est second order error}
 \begin{split}
\E(\|\dG_{n+1,m}\|^2) =& \ \E\left[\left\|  A_{n,m}(h)\dG_{n,m} + \frac{1}{2}h\left(  B_{n,m}(h)\Delta K_1 + \Delta K_2 \right)\right\|^2\right]\\
=& \   \E(\|A_{n,m}(h)\dG_{n,m})\|^2)  + \underbrace{ \frac{1}{4}h^2\E\left[  \left\| B_{n,m}(h)\Delta K_1 + \Delta K_2\right\|^2  \right] }_{\text{quadratic term}} + \\
& + \underbrace{ \mathrm{Re} \, h\E\left[ \text{tr} \left( \left(B_{n,m}(h)\Delta K_1 + \Delta K_2\right)^\dagger  \left( A_{n,m}(h)\dG_{n,m} \right)  \right) \right]}_{\text{cross term}}.
 \end{split}
\end{equation}
To bound the quadratic term, we first derive the following results from the estimates \eqref{eq: bound k_1^2} and \eqref{eq: bound k_2^2}:
\begin{equation}
    \begin{split}
    &\E( \|\tK_1 - K_1\|^2 ) \le  128P^2_1(t_{n-m}) h^2 \left[ \sum_{i = 1}^{n-m} \left( 2 + (n-m-i)h  \right)\Ns^{(\std)}_{\Gamma_{n,m}(i)}(\Delta \gb_{n,m})  \right]^2+ 8\bar{\gamma}(t_{n-m})\cdot \frac{1}{N_s},\\
   &\E( \|\tK_2 - K_2\|^2 ) \le 1568P^2_1(t_{n-m+1}) h^2 \left[  \sum_{i = 1}^{n-m} \left( 2 + (n-m+1-i)h  \right) \Ns^{(\std)}_{\Gamma^*_{n,m}(i)}(\Delta \gb^*_{n,m}) \right]^2\\
   & \hspace{340pt}+ 18\bar{\gamma}(t_{n-m+1})\cdot \frac{1}{N_s}.
    \end{split}
\end{equation}
Then the quadratic term is bounded by 
\begin{equation}\label{eq: est quadratic term}
    \begin{split}
&\underbrace{\frac{1}{4}h^2\E\left[  \left\|B_{n,m}(h)\Delta K_1 + \Delta K_2\right\|^2  \right] }_{\text{quadratic term}} \le  \frac{1}{2}(1+ \bdH^2 h^2)h^2\E(\|\Delta K_1\|^2) +\frac{1}{2}h^2 \E(\|\Delta K_2\|^2) \\
\le & \ h^2\E(\|\Delta K_1\|^2) +\frac{1}{2}h^2 \E(\|\Delta K_2\|^2) \\
\le& \ 912 P^2_1(t_{n-m+1})^2 h^4 \left[  \sum_{i = 1}^{n-m} \big( 2 + (n-m+1-i)h  \big) \Ns^{(\std)}_{\Gamma^*_{n,m}(i)}(\Delta \gb^*_{n,m})  \right]^2+ 17\bar{\gamma}(t_{n-m+1})\cdot \frac{h^2}{N_s}.
  \end{split}
\end{equation}
thanks to the previous requirement on $h$ in \eqref{eq: recurrence 2 1/2 derivation} and the results \eqref{eq: bound k_1^2} and \eqref{eq: bound k_2^2} in Lemma \ref{lemma: deltak and deltak^2 integ diff} obtained in the previous section.

Similar to the proof of Lemma \ref{lemma: 2ed error cross term}, the estimation of the cross term in \eqref{eq: est second order error} is more subtle. We will again need some key estimates from the following local scheme for the inchworm equation:
\begin{equation}\label{def: scheme 3}
 \begin{split}
&\bG^*_{n+1,m} = (I+\sgn(t_n - t) \ii H_s h)\tG_{n,m} +  \bK_1 h,  \\
&\bG_{n+1,m} = (I + \frac{1}{2}\sgn(t_n - t)\ii H_s h  )\tG_{n,m} +\frac{1}{2}\sgn(t_{n+1} - t)\ii  H_s h  \bG^*_{n+1,m} +  \frac{1}{2}(\bK_1+\bK_2) h, \quad 0 \le m\le n\le 2N,
   \end{split}
\end{equation}
where 
\begin{equation} \label{def: scheme 3 notation}
 \begin{gathered}
\bK_1 = F_1(\bg_{n,m}),\qquad \bg_{n,m} = \tg_{n,m};\\
\bK_2 = F_2(\bg^*_{n,m}), \qquad \bg^*_{n,m} = (\tg_{n,m};\tG_{n+1,n},\cdots,\tG_{n+1,m+1},\bG^*_{n+1,m}).
  \end{gathered}
\end{equation}
These quantities are introduced as the counterpart of \eqref{eq: semi-stochastic}, which is a deterministic time step applied to the stochastic solutions. The following results are similar to Lemma \ref{thm: diff_tkvsbk} for the case of differential equations: 

\begin{lemma}\label{thm: ktilde vs kbar}
Given the time step length $h$ and the number of Monte Carlo samples at each step $N_s$, we have
\begin{align}
&\| \E_{\vec{\sb}}(\bK_1 - \tK_1) \|=  0, \label{thm: 1st moment runge kutta 1} \\
&\| \E_{\vec{\sb},\vec{\sb}'}(\bK_2 - \tK_2) \|\le 4 \bar{\alpha}(t_{n-m+1})\bar{\gamma}(t_{n-m+1})\cdot \frac{h^2}{N_s},\label{thm: 1st moment runge kutta 2} \\
&\left[\E( \|\bK_1 - K_1\|^2 )\right]^{1/2} \le 8P_1(t_{n-m})  h   \sum_{i = 1}^{n-m} \left( 2 + (n-m-i)h  \right) \Ns^{(\emph{std})}_{\Gamma_{n,m}(i)}(\Delta \gb_{n,m}) , \label{thm: 2ed moment runge kutta 1} \\
&\left[ \E( \|\bK_2 - K_2\|^2 )\right]^{1/2} \le 28P_1(t_{n-m+1}) h  \sum_{i = 1}^{n-m} \left( 2 + (n-m+1-i)h  \right) \Ns^{(\emph{std})}_{\Gamma^*_{n,m}(i)}(\Delta \gb^*_{n,m}) \label{thm: 2ed moment runge kutta 2}
\end{align}
where the formula of $\bar{\alpha}(t)$ is given in \eqref{eq: bound k_1}.
\end{lemma}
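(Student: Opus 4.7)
The plan is to split the four bounds into two pairs. Bounds (1), (3), (4) compare the semi-stochastic $\bK_i$ to its fully-deterministic counterpart $K_i$ or exploit the unbiasedness of the one-step Monte Carlo average, while the key estimate (2) exploits a cancellation of the first-order Taylor term. Throughout, the essential structural observation is that $\bK_1$ and $\bG^*_{n+1,m}$ depend on $\tg_{n,m}$ only, and so are independent of the current-step samples $\vec{\sb}$ and $\vec{\sb}'$.

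First I would dispatch (1), (3), (4), since each follows the pattern of earlier arguments in the excerpt. Bound (1) is immediate from the unbiasedness relation \eqref{eq: relation F and G}: conditional on $\tg_{n,m}$, each $\tilde F_1(\tg_{n,m};\vec{\sb}^i)$ has mean $F_1(\tg_{n,m}) = \bK_1$, so $\E_{\vec{\sb}}(\tK_1) = \bK_1$ exactly. For (3), write $\bK_1 - K_1 = F_1(\tg_{n,m}) - F_1(\gb_{n,m})$, apply the mean value theorem, and invoke Proposition~\ref{thm: first order derivative}; this is the computation \eqref{eq: est mean value} but without the variance term, and combining the four component-wise estimates into a Frobenius-norm estimate changes the constant $4P_1$ to $8P_1$. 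For (4), I would mimic \eqref{eq: est mean value 2}: write $\bK_2 - K_2 = F_2(\bg^*_{n,m}) - F_2(\gb^*_{n,m})$, apply the mean value theorem, and bound the corner term via $\bG^*_{n+1,m} - G^*_{n+1,m} = (I+\sgn(t_n-t)\ii H_s h)\Delta G_{n,m} + h(\bK_1 - K_1)$, using (3) for the second summand. This absorbs the corner contribution into the sum over $i = 1,\dots,n-m$, and the constant $28$ reflects the combined factor from the $F_2$-analog of Proposition~\ref{thm: first order derivative} together with the $(1 + \tfrac12\bdH^2 h^2)$ factor from the propagator step.

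The main work is bound (2); the point is that it is $O(h^2/N_s)$ rather than the naive $O(h/\sqrt{N_s})$. Since $\vec{\sb}'$ is independent of $\tg^*_{n,m}$ (the latter only depends on $\tg_{n,m}$ and $\vec{\sb}$), unbiasedness gives $\E_{\vec{\sb}'}[\tilde F_2(\tg^*_{n,m};\vec{\sb}')] = F_2(\tg^*_{n,m})$, so $\E_{\vec{\sb},\vec{\sb}'}(\bK_2 - \tK_2) = \E_{\vec{\sb}}[F_2(\bg^*_{n,m}) - F_2(\tg^*_{n,m})]$. The only coordinate in which $\bg^*_{n,m}$ and $\tg^*_{n,m}$ differ is the $(n+1,m)$ slot, with vectorized difference $\vec{\delta} = h(\vec{\bK_1} - \vec{\tK_1})$. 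The critical step is to Taylor expand around $\bg^*_{n,m}$ (not $\tg^*_{n,m}$):
\[
F_2(\tg^*_{n,m}) - F_2(\bg^*_{n,m}) = -\nabla F_2(\bg^*_{n,m})^{\TT}\vec{\delta} + \tfrac12 \vec{\delta}^{\TT}\nabla^2 F_2(\xib^*_{n,m}) \vec{\delta},
\]
with $\xib^*_{n,m}$ a convex combination of $\tg^*_{n,m}$ and $\bg^*_{n,m}$. Because $\bg^*_{n,m}$ is $\vec{\sb}$-measurable only through $\tg_{n,m}$ (which we condition on), $\nabla F_2(\bg^*_{n,m})$ can be pulled out of $\E_{\vec{\sb}}$, and the first-order term becomes $h\,\nabla F_2(\bg^*_{n,m})^{\TT}\E_{\vec{\sb}}(\vec{\bK_1} - \vec{\tK_1}) = 0$ by bound (1). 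Only the quadratic remainder survives, and it is controlled by the $(n+1,m)\times(n+1,m)$ block of $\nabla^2 F_2$ via Proposition~\ref{thm: second order derivative} (with $n$ replaced by $n+1$) together with the standard Monte Carlo variance estimate $\E_{\vec{\sb}}\|\bK_1 - \tK_1\|^2 \le \bar\gamma(t_{n-m})/N_s$ from \eqref{eq: est variance}. The product yields a bound of the claimed form $\bar\alpha(t_{n-m+1})\bar\gamma(t_{n-m+1})\,h^2/N_s$, with the definition of $\bar\alpha$ in \eqref{eq: alpha gamma} chosen precisely to absorb the aggregated second-order derivative contributions uniformly across cases.

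The hardest part is the correct choice of Taylor expansion point in (2): expanding at $\tg^*_{n,m}$ instead leaves a first-order term whose $\E_{\vec{\sb}}$ is nonzero because $\nabla F_2(\tg^*_{n,m})$ and $\bK_1 - \tK_1$ are correlated through $\vec{\sb}$, and this would deliver only the pessimistic $O(h/\sqrt{N_s})$ rate. Secondary technical points include verifying that $\xib^*_{n,m}$ lies in the domain where Proposition~\ref{thm: second order derivative} applies and collecting the Frobenius-norm constants; in comparison the proofs of (3) and (4) are essentially rewrites of computations already carried out in Section~\ref{sec: recurrence 2}.
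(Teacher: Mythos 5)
Your proposal is correct and follows essentially the route the paper intends: the paper omits the proof as "almost identical" to that of Lemma~\ref{lemma: deltak and deltak^2 integ diff}, and your argument reproduces that pattern, with the key point for \eqref{thm: 1st moment runge kutta 2} being exactly the one the paper relies on — Taylor expansion about the $\vec{\sb}$-independent point $\bg^*_{n,m}$ so that the first-order term vanishes by \eqref{thm: 1st moment runge kutta 1}, leaving only the $O(h^2/N_s)$ quadratic remainder. The treatment of \eqref{thm: 2ed moment runge kutta 1} and \eqref{thm: 2ed moment runge kutta 2} as variance-free rewrites of \eqref{eq: est mean value} and \eqref{eq: est mean value 2} is likewise what the paper has in mind.
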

The rigorous proof of this lemma is omitted since it is almost identical to that of Lemma \ref{lemma: deltak and deltak^2 integ diff}.

Now we are ready to bound the cross-term in \eqref{eq: est second order error}. By the same treatment as the case of differential equations, we have
\begin{equation}\label{eq: 2ed error cross split}
  \begin{split}
 & \left|\mathrm{Re} \, h\E\left[ \text{tr} \left( \left(B_{n,m}(h)\Delta K_1 + \Delta K_2\right)^\dagger  \left( A_{n,m}(h)\dG_{n,m} \right)  \right) \right] \right| \\
   \le {} & \left| h\E\left[ \text{tr} \left( \left(  B_{n,m}(h) ( K_1 - \bK_1 )  + ( K_2 - \bK_2 ) \right)^\dagger  \left(A_{n,m}(h)\dG_{n,m}\right) \right)  \right] \right| \\
   &\hspace{80pt}+   \left| h\E\left[ \text{tr}\left( \left(  B_{n,m}(h) ( \bK_1 - \tK_1 )  + ( \bK_2 - \tK_2 ) \right)^\dagger  \left(A_{n,m}(h)\dG_{n,m}\right) \right)  \right] \right|\\
={}& \left| h\E\left[ \text{tr} \left( \left(  B_{n,m}(h) ( K_1 - \bK_1 )  + ( K_2 - \bK_2 ) \right)^\dagger  \left(A_{n,m}(h)\dG_{n,m}\right) \right) \right] \right| \\
&\hspace{50pt} +   \left| h\E\left\{ \text{tr} \left( \left[ \E_{\vec{\sb},\vec{\sb}'}   \left(  B_{n,m}(h) ( \bK_1 - \tK_1 )  + ( \bK_2 - \tK_2 )   \right) \right]^\dagger \left(A_{n,m}(h)\dG_{n,m}\right)  \right) \right\}\right| \\
 \le {}& h \left[\E\left(\|A_{n,m}(h) \dG_{n,m} \|^2\right)\right]^{1/2}\left\{   \left[ \E\left(   \left\|  B_{n,m}(h) ( K_1 - \bK_1 )  + ( K_2 - \bK_2 ) \right\|^2  \right) \right]^{1/2} \right.\\
 &\left. \hspace{100pt}  +\left[\E\left(  \left\|\E_{\vec{\sb},\vec{\sb}'}  \left( B_{n,m}(h) ( \bK_1 - \tK_1 )  + ( \bK_2 - \tK_2 ) \right)  \right\|^2  \right) \right]^{1/2}  \right\}
\end{split}
\end{equation}
where we have applied Cauchy-Schwarz inequality in the last step.

On the right-hand side of \eqref{eq: 2ed error cross split}, the term $\left[ \E\left(\|A_{n,m}(h) \dG_{n,m} \|^2\right)\right]^{1/2}$ has already been bounded in \eqref{eq: AG}. For the other term, we can find the bounds by Lemma \ref{thm: ktilde vs kbar} immediately: 
\begin{equation} 
    \begin{split}
& \left[ \E\left(   \big\|  B_{n,m}(h) ( K_1 - \bK_1 )  + ( K_2 - \bK_2 ) \big\|^2  \right) \right]^{1/2} \le   2  \left[\E\left(\|K_1 - \bK_1\|^2\right) \right]^{1/2} + \left[ \E\left(\|K_2 - \bK_2\|^2\right) \right]^{1/2}\\
& \hspace{140pt}\le  44 P_1(t_{n-m+1}) h  \sum_{i = 1}^{n-m} \left( 2 + (n-m+1-i)h  \right) \Ns^{(\std)}_{\Gamma^*_{n,m}(i)}(\Delta \gb^*_{n,m})  ,\\
& \left[\E\left(  \Big\|\E_{\vec{\sb},\vec{\sb}'}  \left( B_{n,m}(h) ( \bK_1 - \tK_1 )  + ( \bK_2 - \tK_2 ) \right)  \Big\|^2  \right) \right]^{1/2} \le  \Big\|\E_{\vec{\sb},\vec{\sb}'}  \left( B_{n,m}(h) ( \bK_1 - \tK_1 )  + ( \bK_2 - \tK_2 ) \right)  \Big\| \\
& \hspace{100pt}\le  2\big \|\E_{\vec{\sb},\vec{\sb}'}  ( \bK_1 - \tK_1) \big\|^2 + \big \|\E_{\vec{\sb},\vec{\sb}'}  ( \bK_2 - \tK_2) \big\|^2   \le     4 \bar{\alpha}(t_{n-m+1})\bar{\gamma}(t_{n-m+1})\cdot \frac{h^2}{N_s}.
   \end{split}
\end{equation}
Thus the final estimation of the cross term is
\begin{equation}\label{eq: est cross term}
   \begin{split}
     & \left|\mathrm{Re} \, h\E\left[ \text{tr} \left( \left(B_{n,m}(h)\Delta K_1 + \Delta K_2\right)^\dagger  \left( A_{n,m}(h)\dG_{n,m} \right)  \right) \right] \right| \le \left(1+ \frac{1}{8}\bdH^4 h^4\right)h \left[\E(\|\dG_{n,m}\|^2)\right]^{1/2} \times\\ 
     & \left\{  44 P_1(t_{n-m+1}) h  \sum_{i = 1}^{n-m} \big( 2 + (n-m+1-i)h  \big) \Ns^{(\std)}_{\Gamma^*_{n,m}(i)}(\Delta \gb^*_{n,m})  +  4 \bar{\alpha}(t_{n-m+1})\bar{\gamma}(t_{n-m+1})\cdot \frac{h^2}{N_s} \right\}.
          \end{split}
\end{equation}
Finally, we combine the estimate \eqref{eq: est quadratic term} for the quadratic term with \eqref{eq: est cross term} for the cross term to obtain the recurrence relation \eqref{eq: recurrence 2} for the numerical error.

\subsection{Proof of  \eqref{2ed error upper bound} in Theorem \ref{thm: bounds}---Estimation of the numerical error} 
\label{sec: estimates}
In this section we discuss how to apply the recurrence relations in Proposition \ref{thm: recurrence relations} to obtain the estimates in Theorem \ref{thm: bounds}. Here we only focus on the estimate for the numerical error which we are more interested in. For the bias, we refer the readers to \ref{sec: recurrence 1} for the detailed proof.

In Proposition \ref{thm: recurrence relations}, two recurrence relations are given, in which the first relation \eqref{eq: recurrence 2 1/2} is easier to analyze due to its linearity. For simplicity, we rewrite this estimate as
\begin{equation}\label{eq: recurrence 2 1/2 simple}
    \begin{split}
     &\Ns^{(\std)}_{\Gamma^*_{n,m}(j-m+1)}(\Delta \gb^*_{n,m})   \le   (1+c_1 h^4)\Ns^{(\std)}_{\Gamma^*_{n,m}(j-m)}(\Delta \gb^*_{n,m})\\
&\hspace{30pt}+ c_2 h^2  \sum_{i = 1}^{j-m} \big( 2 + (j-m+1-i)h  \big) \Ns^{(\std)}_{\Gamma^*_{n,m}(i)}(\Delta \gb^*_{n,m}) +c_3 \frac{h}{\sqrt{N_s}}, \qquad j = m, \cdots,n-1,
    \end{split}
\end{equation}
where we have introduced the notations $c_1 =\frac{1}{8}\bdH^4$, $c_2 =22 P_1(t_{n-m})$ and $c_3 = \frac{7}{2}  \sqrt{\bar{\gamma}(t_{n-m})}$ for simplicity. The inequality \eqref{eq: recurrence 2 1/2 simple} is obtained by taking the maximum on the diagonals, and using the fact that $\bar{\gamma} (t_{j+1-m}) \le \bar{\gamma}(t_{n-m})$.

This inequality shows that the recurrence relation of the error with two indices $n$ and $m$ can be simplified as a recurrence relation with only one index $j$. For each $j$, the quantity $\Ns^{(\std)}_{\Gamma^*_{n,m}(j-m)}(\Delta \gb^*_{n,m})$ denotes the maximum numerical error on the $(j-m)$th diagonal (see Figure \ref{fig:sets}). This can also be understood by an alternative order of computation: once all the propagators on the diagonals $\Gamma^*_{n,m}(i)$ for $i \le j-m$ are computed, the propagators on $\Gamma^*_{n,m}(i)$ can actually be computed in an arbitrary order, i.e., the computation of all the propagators on $\Gamma^*_{n,m}(i)$ are independent of each other. The derivation of \eqref{eq: recurrence 2 1/2 simple} is inspired by this observation, and this idea will also be used in the proof of Theorem \ref{thm: bounds} to be presented later in this section.

To study the growth of the error from \eqref{eq: recurrence 2 1/2 simple}, one can define a sequence $\{A_j\}$ with the following recurrence relation:
\begin{equation}\label{eq: rec num err 1}
    A_{j+1} = (1+3c_2 h^2)A_j + c_2h^2  \sum_{i = m+1}^{j-1} \big( 2 + (j+1-i)h  \big) A_i + c_3 \frac{h}{\sqrt{N_s}} \text{~for~} j = m,\cdots, n-1,
\end{equation}
and initial condition $A_m=0$. Then we have $ [\E(\|\dG_{j+1,m}\|^2)]^{1/2} \le  A_{j+1}$ if we require $\frac{c_1}{c_2} h^2 + h \le 1$. Increasing the index $j$ in \eqref{eq: rec num err 1} by one, we get
\begin{equation}\label{eq: rec num err 2}
    A_{j+2} = (1+3c_2 h^2)A_{j+1} + c_2h^2  \sum_{i = m+1}^{j} \big( 2 + (j+2-i)h  \big) A_i + c_3 \frac{h}{\sqrt{N_s}}.
\end{equation}
Subtracting \eqref{eq: rec num err 1} from \eqref{eq: rec num err 2} yields
\begin{equation} \label{eq: rec num err 3}
      A_{j+2} =  (2+ 3c_2h^2 ) A_{j+1} - (1+c_2 h^2 -2c_2 h^3)A_j + c_2 h^3 \sum^{j-1}_{i=m+1}A_i. 
\end{equation}
Similarly, we can reduce the index $j$ in \eqref{eq: rec num err 3} by $1$ and again subtract the two equations, so that a recurrence relation without summation can be derived: 
\begin{equation}
A_{j+2} - (3+ 3c_2 h^2) A_{j+1} + (3+4c_2 h^2  -2 c_2 h^3)A_j - (1+c_2 h^2 -c_2 h^3)A_{j-1} = 0.
\end{equation}
The general formula of $A_j$ can then be found by solving the corresponding characteristic equation. We denote $A_j$ as 
\begin{equation} \label{eq:A_j}
    A_{j} = \sigma_1 r^j_1 +  \sigma_2 r^j_2 + \sigma_3 r^j_3.
\end{equation}
The formula of each $r_i$ is give in \ref{sec: char poly}, based on which we can estimate $A_{n}$ by
\begin{equation}\label{eq: induction n step}
     A_n \le C(h,N_s) \cdot (1+ \theta_1 \sqrt{P_1(t_{n-m})} h)^{n-m},
     \end{equation}
where $\theta_1$ is a constant and $C(h,N_s)$ is a function to be determined.

The recurrence relation \eqref{eq: recurrence 2 1/2} helps determine the growth rate of the numerical error. However, if we use \eqref{eq: recurrence 2 1/2} to determine the function $C(h,N_s)$, we can only find $C(h,N_s) \propto \sqrt{1/N_s}$, whereas the desired result is $C(h,N_s) \propto \sqrt{h/N_s}$. To this end, the other recurrence relation \eqref{eq: recurrence 2} has to be utilized, as in the proof given below:

\begin{proof}[Proof of Theorem \ref{thm: bounds} (Numerical error)]
As mentioned previously, we only present the proof of \eqref{2ed error upper bound} in this section. We claim that the error satisfies
\begin{equation} \label{eq: error bound}
  \begin{split}
&\Ns^{(\std)}_{\Gamma^*_{n,m}(l+1)}(\Delta \gb^*_{n,m}) \le \theta_2 \sqrt{\bar{\gamma}(t_{n-m+1})}\cdot \sqrt{\frac{h}{N_s}} (1+ \theta_1 \sqrt{P_1(t_{n-m+1})} h)^l \\
& \text{for~}l = 0,1,\cdots, n-m.
  \end{split}
\end{equation}
Here $\theta_2$ is some $O(1)$ constant. We will prove this claim by mathematical induction using \eqref{eq: recurrence 2}. 

We first check the initial case $l = 0$. By the recurrence relation \eqref{eq: recurrence 2}, the left-hand side of \eqref{eq: error bound} is bounded by 
\begin{equation}\label{induction:initial step}
  \begin{split}
  \Ns^{(\std)}_{\Gamma^*_{n,m}(1)}(\Delta \gb^*_{n,m}) = & \ \max\left(  [ \E\big(\|\Delta G_{m+1,m}\|^2\big)]^{1/2}, [ \E\big(\|\Delta G_{m+2,m+1}\|^2\big)]^{1/2},\cdots, [ \E\big(\|\Delta G_{n+1,n}\|^2\big)]^{1/2} \right) \\
  \le & \  \sqrt{17} \sqrt{\bar{\gamma}(h)} \frac{h}{\sqrt{N_s}} \le \theta_2 \sqrt{\bar{\gamma}(t_{n-m+1})}\cdot \sqrt{\frac{h}{N_s}}, 
     \end{split}
\end{equation}
as holds for any constant $\theta_2 \geq \sqrt{17}$. 

Assume that \eqref{eq: error bound} holds for all $l = 0, 1, \cdots, k-1$, when $l=k$,
\begin{equation}\label{induction:parallel}
   \Ns^{(\std)}_{\Gamma^*_{n,m}(k+1)}(\Delta \gb^*_{n,m})= \max\left(  \left[ \E\big(\|\Delta G_{m+k+1,m}\|^2\big)\right]^{1/2}, \cdots, \left[ \E\big(\|\Delta G_{n+1,n-k}\|^2\big)\right]^{1/2}  \right).
\end{equation}
Therefore we just need to find the bounds for each $\left[ \E\big(\|\Delta G_{m+k+j,m+j-1}\|^2\big)\right]^{1/2}$, $j=1,\cdots,n+1-m-k$. This will be done by the recurrence relation \eqref{eq: recurrence 2}. For clearer presentation, we rewrite the equation \eqref{eq: recurrence 2} below only with subscripts replaced:
\begin{equation}\label{eq: recurrence induction}
  \begin{split}
  &  \E\big(\|\Delta G_{m+k+j,m+j-1}\|^2\big) \le \left(1+\frac{1}{4} \bdH^4 h^4\right) \cdot \E\big(\|\Delta G_{m+k+j-1,m+j-1}\|^2\big)   \\
 & +  \left(1+\frac{1}{8} \bdH^4 h^4\right)44 P_1(t_{n-m+1})h^2 \cdot \left[\E\big(\|\Delta G_{m+k+j-1,m+j-1}\|^2\big)\right]^{1/2}\\ 
 & \hspace{130pt}\times \sum_{i=1}^{k}\left(2+(k+1-i)h\right) \Ns^{(\std)}_{\Gamma^*_{m+k+j-1,m+j-1}(i)}(\Delta \gb^*_{n,m})\\
 & + \left(1+\frac{1}{8} \bdH^4 h^4\right)4 \bar{\alpha}(t_{n-m+1}) \bar{\gamma}(t_{n-m+1})\frac{h^3}{N_s} \cdot \left[\E\big(\|\Delta G_{m+k+j-1,m+j-1}\|^2\big)\right]^{1/2} \\
 & +\left\{912 P_1^2(t_{n-m+1}) h^4 \left[ \sum_{i=1}^{k}\left(2+(k+1-i)h\right) \Ns^{(\std)}_{\Gamma^*_{m+k+j-1,m+j-1}(i)}(\Delta \gb^*_{n,m}) \right]^2 + 17  \bar{\gamma}(t_{n-m+1}) \frac{h^2}{N_s}\right\}.
  \end{split}
\end{equation}
For the first and third terms on the right-hand side, we use the inductive hypothesis \eqref{eq: error bound} with $l = k-1$, which indicates that
\begin{equation}
    \E\big(\|\Delta G_{m+k+j-1,m+j-1}\|^2\big) \le \left[ \Ns^{(\std)}_{\Gamma^*_{n,m}(k)}(\Delta \gb^*_{n,m})\right]^2 \le \theta_2^2 \bar{\gamma}(t_{n-m+1})\cdot \frac{h}{N_s} (1+ \theta_1 \sqrt{P_1(t_{n-m+1})} h)^{k-1},
\end{equation}
to get
\begin{align}\label{eq: recurrence induction 1} 
  \begin{split}
& \left(1+\frac{1}{4} \bdH^4 h^4\right) \cdot \E\big( \|\Delta G_{m+k+j-1,m+j-1}\|^2\big)\\
& \hspace{30pt}\le \left(1+\frac{1}{4} \bdH^4 h^4\right) \theta_2^2 \bar{\gamma}(t_{n-m+1})\cdot \frac{h}{N_s} (1+ \theta_1 \sqrt{P_1(t_{n-m+1})} h)^{2k-2},
\end{split}\\\label{eq: recurrence induction 3}
\begin{split}
&\left(1+\frac{1}{8} \bdH^4 h^4\right)4 \bar{\alpha}(t_{n-m+1}) \bar{\gamma}(t_{n-m+1})\frac{h^3}{N_s} \cdot \left[\E\big( \|\Delta G_{m+k+j-1,m+j-1}\|^2\big)\right]^{1/2} \\
& \hspace{30pt} \le  \left(1+\frac{1}{8} \bdH^4 h^4\right)4 \bar{\alpha}(t_{n-m+1}) [\bar{\gamma}(t_{n-m+1})]^{3/2} \theta_2 \cdot  \sqrt{\frac{h^7}{N_s^3}} (1+ \theta_1 \sqrt{P_1(t_{n-m+1})} h)^{k-1}  \\
& \hspace{30pt} \le h\theta_2^2 \bar{\gamma}(t_{n-m+1})\frac{h}{N_s} (1+ \theta_1 \sqrt{P_1(t_{n-m+1})} h)^{2k-2},
    \end{split}
\end{align}
where we have assumed $\left(1+\frac{1}{8} \bdH^4 h^4\right) \sqrt{\frac{h^3}{N_s}} \le \frac{\theta_2 }{4\bar{\alpha}(t_{n-m+1})\sqrt{\bar{\gamma}(t_{n-m+1})}}$ to get the last ``$\le$'' in \eqref{eq: recurrence induction 3}. For the second and the fourth terms on the right-hand side of \eqref{eq: recurrence induction}, we first use the inductive hypothesis to get
\begin{equation}
    \begin{split}
  \Ns^{(\std)}_{\Gamma^*_{m+k+j-1,m+j-1}(i)}(\Delta \gb^*_{n,m}) \le & \ \Ns^{(\std)}_{\Gamma^*_{n,m}(i)}(\Delta \gb^*_{n,m})\\
  \le & \ \theta_2 \sqrt{\bar{\gamma}(t_{n-m+1})}\cdot \sqrt{\frac{h}{N_s}} (1+ \theta_1 \sqrt{P_1(t_{n-m+1})} h)^{i-1},\text{~for~}i=1,\cdots,k.
        \end{split}
\end{equation}
The next step is to insert the above estimation into \eqref{eq: recurrence induction}. To assist our further calculation, we let $c_4 =\theta_1 \sqrt{P_1(t_{n-m+1})}$ and perform the following calculation: 
\begin{equation}
  \begin{split}
  &\sum_{i = 1}^{k} \big( 2 + (k+1-i)h \big)\cdot (1+ c_4 h)^{i}\\
  = & \ 2\sum_{i = 1}^{k} (1+ c_4 h)^{i} + h  \sum_{i = 1}^{k}  (k+1-i)\cdot (1+ c_4 h)^{i} \\
  = & \ 2 \frac{(1+c_4 h)^{k+1} - (1+c_4 h)}{c_4 h} + h \frac{(1+c_4 h)^{k+2} - (1+c_4 h)^{2}  - (1+c_4 h)c_4 k h }{c_4^2 h^2} \\
  \le & \ 2 \frac{(1+c_4 h)^{k+1} }{c_4 h} +  2\frac{(1+c_4 h)^{k+1} }{c_4^2 h} = \frac{(1+c_4 h)^{k+1}}{h}\cdot 2\left(  \frac{1}{c_4}+\frac{1}{c_4^2}  \right),
  \end{split}
\end{equation}
where we have assumed $h \le \frac{1}{c_4}$. By this inequality, we see that when $h$ is sufficiently small, for any $\theta_1 > 1$, the second term on the right-hand side of \eqref{eq: recurrence induction} satisfies
\begin{equation}\label{eq: recurrence induction 2}
    \begin{split}
& \left(1+\frac{1}{8} \bdH^4 h^4\right)44 P_1(t_{n-m+1})h^2 \cdot \left[\E\big(\|\Delta G_{m+k+j-1,m+j-1}\|^2\big)\right]^{1/2}\\ 
 & \hspace{130pt}\times \sum_{i=1}^{k}\left(2+(k+1-i)h\right) \Ns^{(\std)}_{\Gamma^*_{m+k+j-1,m+j-1}(i)}(\Delta \gb^*_{n,m})\\ \le & \ \left(1+\frac{1}{8} \bdH^4 h^4\right)44 P_1(t_{n-m+1})\theta_2^2 \bar{\gamma}(t_{n-m+1}) \frac{h^3}{N_s} \sum_{i = 1}^{k} \big( 2 + (k+1-i)h \big)\cdot (1+ \theta_1 \sqrt{P_1(t_{n-m+1})} h)^{k+i-2} \\
 \le & \ 88\underbrace{\left(1+\frac{1}{8} \bdH^4 h^4\right)}_{\le 2} \underbrace{(1+\theta_1 \sqrt{P_1(t_{n-m+1})} h)}_{\le 2} \underbrace{\left(  \frac{\sqrt{P_1(t_{n-m+1})}}{\theta_1 } + \frac{1}{\theta^2_1}\right)}_{\le 2\sqrt{P_1(t_{n-m+1})}} \\
& \hspace{200pt}\times h\theta_2^2 \bar{\gamma}(t_{n-m+1})\frac{h}{N_s} (1+ \theta_1 \sqrt{P_1(t_{n-m+1})} h)^{2k-2} \\
\le& \  704 \sqrt{P_1(t_{n-m+1})} h\theta_2^2 \bar{\gamma}(t_{n-m+1})\frac{h}{N_s} (1+ \theta_1 \sqrt{P_1(t_{n-m+1})} h)^{2k-2},
    \end{split}
\end{equation}
and when $\theta_2 = \sqrt{34}$, the fourth term on the right-hand side of \eqref{eq: recurrence induction} satisfies
\begin{equation}\label{eq: recurrence induction 4}
    \begin{split}
&912 P_1^2(t_{n-m+1}) h^4 \left[ \sum_{i=1}^{k}\left(2+(k+1-i)h\right) \Ns^{(\std)}_{\Gamma^*_{m+k+j-1,m+j-1}(i)}(\Delta \gb^*_{n,m}) \right]^2 + 17  \bar{\gamma}(t_{n-m+1}) \frac{h^2}{N_s}\\
\le & \ 912 P_1^2(t_{n-m+1}) \theta_2^2 \bar{\gamma}(t_{n-m+1}) \frac{h^5}{N_s}  \left[ \sum_{i = 1}^{k} \left( 2 + (k+1-i)h  \right)  (1+ \theta_1 \sqrt{P_1(t_{n-m+1})} h)^{i-1} \right]^2 + 17  \bar{\gamma}(t_{n-m+1}) \frac{h^2}{N_s} \\
= & \ 3648\underbrace{(1+\theta_1 \sqrt{P_1(t_{n-m+1})} h)^2}_{\le 2}\left(  \frac{\sqrt{P_1(t_{n-m+1})}}{\theta_1 } + \frac{1}{\theta^2_1 }\right)^2 h \times  h\theta_2^2 \bar{\gamma}(t_{n-m+1})\frac{h}{N_s} (1+ \theta_1 \sqrt{P_1(t_{n-m+1})} h)^{2k-2}\\
&\hspace{130pt}+\underbrace{\frac{17}{\theta_2^2}}_{\le \frac{1}{2}}\times h\theta_2^2 \bar{\gamma}(t_{n-m+1})\frac{h}{N_s} (1+ \theta_1 \sqrt{P_1(t_{n-m+1})} h)^{2k-2} \\
\le & \ h\theta_2^2 \bar{\gamma}(t_{n-m+1})\frac{h}{N_s} (1+ \theta_1 \sqrt{P_1(t_{n-m+1})} h)^{2k-2}
    \end{split}
\end{equation}
by assuming $7296 h \left(  \frac{\sqrt{P_1(t_{n-m+1})}}{\theta_1 } + \frac{1}{\theta^2_1 }\right)^2 \le 1/2$.

Inserting \eqref{eq: recurrence induction 1}\eqref{eq: recurrence induction 2}\eqref{eq: recurrence induction 3}\eqref{eq: recurrence induction 4} into \eqref{eq: recurrence induction}, we get
\begin{equation}
   \begin{split}
 & \E\big(\|\Delta G_{m+k+j,m+j-1}\|^2\big) \\
 \le{} & \frac{1 + (704\sqrt{P_1(t_{n-m+1})}+2)h + \frac{1}{4}\bdH^4 h^4}{(1+ \theta_1 \sqrt{P_1(t_{n-m+1})} h)^2} \times  \theta_2^2 \bar{\gamma}(t_{n-m+1})\frac{h}{N_s} (1+ \theta_1 \sqrt{P_1(t_{n-m+1})} h)^{2k}\\
 \le{} & \theta_2^2 \bar{\gamma}(t_{n-m+1})\frac{h}{N_s} (1+ \theta_1 \sqrt{P_1(t_{n-m+1})} h)^{2k}
     \end{split}
\end{equation}
by setting $\theta_1 = 353$. Since this inequality holds for all $j = 1,\cdots,n+1-m-k$, by \eqref{induction:parallel}, we know that \eqref{eq: error bound} holds for $l = k$. By the principle of mathematical induction, we have completed the proof of \eqref{eq: error bound}.

Finally, we set $l=n-m$ in \eqref{eq: error bound} to get 
\begin{equation}
    [ \E\big(\|\Delta G_{n+1,m}\|^2\big)]^{1/2} = \Ns^{(\std)}_{\Gamma^*_{n,m}(n-m+1)}(\Delta \gb^*_{n,m}) \le  \theta_2 \sqrt{\bar{\gamma}(t_{n-m+1})}\cdot \sqrt{\frac{h}{N_s}} (1+ \theta_1 \sqrt{P_1(t_{n-m+1})} h)^{n-m},
\end{equation}
resulting in the final estimate \eqref{2ed error upper bound} for the numerical error.
\end{proof}

\begin{remark}
     Due to the jump conditions \eqref{eq: jump condition}, we actually need to multiply by the norm of the observable $\|O_s\|$ on the right-hand side of \eqref{eq: recurrence 2 1/2 simple} when crossing the discontinuities. However, we can always first consider the observable $O_s/\|O_s\|$, and then multiply the result by $\|O_s\|$. Therefore we can always assume $\|O_s\| = 1$ in this paper and thus our analysis is not affected.
\end{remark}

\subsection{Proof of Proposition \ref{thm: Runge Kutta error}---Estimation of the error for the deterministic method}
\label{sec: Runge Kutta error}
In this section, we consider the error $E_{n+1,m} = \Ge(t_{n+1},t_m) - G_{n+1,m}$ for the deterministic scheme \eqref{def: scheme 1}. By triangle inequality,
\begin{equation} \label{eq: part 1&2}
   \begin{split}
&\|E_{n+1,m}\| \le \underbrace{\left\|\Ge(t_{n+1},t_m) - \left[A_{n,m}(h)\Ge(t_{n},t_m) + \frac{1}{2}h  \left( B_{n,m}(h) F_1(\ge_{n,m}) + F_2(\ges_{n,m})   \right)   \right]\right\|}_{\text{Part 1}}\\
& \hspace{100pt}+ \underbrace{\left\|\left[A_{n,m}(h)\Ge(t_{n},t_m) + \frac{1}{2}h  \left( B_{n,m}(h) F_1(\ge_{n,m}) + F_2(\ges_{n,m})   \right)   \right] - G_{n+1,m} \right\|}_{\text{Part 2}}
   \end{split}
\end{equation}
where $\ge_{n,m}$ and $\ges_{n,m}$ are defined as
\begin{equation}\label{gbe}
    \begin{split}
    \ge_{n,m} &=  \left(\Ge(t_{m+1},t_m);\Ge(t_{m+2},t_{m+1}),\Ge(t_{m+2},t_m);\cdots;\Ge(t_n,t_{n-1}),\cdots,\Ge(t_n,t_m)\right), \\
\ges_{n,m} &= \left(\ge_{n,m};\Ge(t_{n+1},t_n),\cdots,\Ge(t_{n+1},t_{m+1}),\Ge(t_{n+1},t_m)\right),
 \end{split}
\end{equation}
which are similar to the definitions of $\gb_{n,m}$ and $\gb^*_{n,m}$.
We note that $\Ge$ on the discontinuities are again defined to be multiple-valued as 
 \begin{equation*}
\Ge(t_N,t_k) = \left(\Ge(t^+,t_k),\Ge(t^-,t_k) \right)\text{~and~}\Ge(t_j,t_N ) = \left(\Ge(t_j,t^+),\Ge(t_j,t^-)  \right) \\
  \end{equation*}
for $0\le k \le N-1$ and $N+1\le j \le 2N$. We further define $\eb_{n,m} = \ge_{n,m} - \gb_{n,m}$ and $\eb^*_{n,m} = \ges_{n,m} - \gb^*_{n,m}$, which will be used later. The estimation of the two parts in \eqref{eq: part 1&2} will be discussed in the following two subsections.

\subsubsection{Estimation of Part 1 in \eqref{eq: part 1&2}} 
We further split this part of the error by 
\begin{equation}\label{eq: conv_heuns_1}
  \begin{split}
&\Ge(t_{n+1},t_m) - \left[A_{n,m}(h)\Ge(t_{n},t_m) + \frac{1}{2}h  \left( B_{n,m}(h) F_1(\ge_{n,m}) + F_2(\ges_{n,m})   \right)   \right] \\
={} & \left(\Ge(t_{n+1},t_m) -A_{n,m}(h)\Ge(t_{n},t_m) \right) - \frac{1}{2}h \left( B_{n,m}(h)\Hs(t_n,\Ge,t_m) + \Hs(t_{n+1},\Ge,t_m)  \right) \\
&\hspace{50pt} +\frac{1}{2}h \left( B_{n,m}(h)\Hs(t_n,\Ge,t_m) + \Hs(t_{n+1},\Ge,t_m)  \right) - \frac{1}{2}h  \left( B_{n,m}(h) F_1(\ge_{n,m}) + F_2(\ges_{n,m})   \right)
   \end{split}
\end{equation}
where the definition of $\Hs$ is given in \eqref{eq: inchworm equation}.

Using Taylor expansion, we may easily obtain the bound for the first term of \eqref{eq: conv_heuns_1}:   
\begin{equation}\label{eq: trunc_err_1}
\left\|\Ge(t_{n+1},t_m) -A_{n,m}(h)\Ge(t_{n},t_m) - \frac{h}{2} \left[ B_{n,m}(h)\Hs(t_n,\Ge,t_m) + \Hs(t_{n+1},\Ge,t_m)  \right] \right\| \le (\frac{1}{4} \bdH \bdG''+ \frac{5}{12} \bdG''' )h^3.
\end{equation}
Meanwhile, since 
\begin{equation}\label{eq: interp_error}
  \begin{split}
&\Hs(t_n,\Ge,t_m) - F_1(\ge_{n,m}) \\
= & \ \sgn(t_n-t)\sum^{\bar{M}}_{\substack{M=1 \\ M \text{~is odd~}}} \ii^{M+1} \int_{t_n > \vec{\sb} > t_m } (-1)^{\#\{\vec{\sb} < t\}}\\
&\hspace{20pt}W_s\left[\Ge(t_n,s_M)W_s\cdots W_s\Ge(s_1,t_m)-I_h\Ge(t_n,s_M)W_s\cdots W_s I_h \Ge(s_1,t_m) \right]\Ls(t_n,\vec{\sb})\dd \vec{\sb},
   \end{split}
\end{equation}
we need the following estimation to bound the above term:
\begin{equation} \label{eq: integrand estimation}
 \begin{split}
& \left\| \Ge(t_n,s_M)W_s\cdots W_s\Ge(s_1,t_m)-I_h\Ge(t_n,s_M)W_s\cdots W_s I_h \Ge(s_1,t_m) \right\|  \\
\le & \ \sum_{j = 0}^M \left\| \Ge(t_n,s_M)W_s \cdots W_s \Ge(s_{j+2},s_{j+1})\right\| \cdot \bdW \cdot  \left\|\Ge(s_{j+1},s_{j}) - I_h \Ge(s_{j+1},s_{j})\right\| \cdot \bdW \\
& \hspace{160pt}\cdot \left\| I_h\Ge(s_{j},s_{j-1})W_s \cdots W_s I_h\Ge(s_{1},t_m)\right\|.
  \end{split}
\end{equation}
The term $\left\|\Ge(s_{j+1},s_{j}) - I_h \Ge(s_{j+1},s_{j})\right\|$ in the above equation is the linear interpolation error. By the standard error estimation of interpolation (see e.g. \cite{Lin_Interp_Err_Simpl}), if the point $(s_{j+1},s_j)$ locates inside the triangle $T'$, the interpolation error is
\begin{equation}
\left|\Ge^{(rs)}(s_{j+1},s_j) - I_h \Ge^{(rs)}(s_{j+1},s_j)\right| \le  \frac{1}{2}R_{T'}^2  \left\| \rho\left(D^2 \Ge^{(rs)}\right)  \right\|_{L^\infty(T')},
\end{equation}
where $R_{T'}$ is the radius of the circumscribed circle of $T'$ and $\rho(D^2 \Ge^{(rs)}\big)$ denotes the spectral radius of the Hessian of $\Ge^{(rs)}$, which can be bounded by
\begin{equation*}   
\rho \left(D^2 \Ge^{(rs)}\right) \le  \|\nabla^2 \Ge^{(rs)}\| \le  2 \bdG''.
\end{equation*}
Plugging the above result into \eqref{eq: integrand estimation} and using the bounds of the exact and numerical solutions, we get
\begin{equation}
\left\| \Ge(t_n,s_M)W_s\cdots W_s\Ge(s_1,t_m)-I_h\Ge(t_n,s_M)W_s\cdots W_sI_h \Ge(s_1,t_m) \right\| \le (M+1)\bdG^M \bdW^M \bdG'' h^2.
\end{equation}
Finally we apply this bound to \eqref{eq: interp_error} to obtain
\begin{equation}\label{eq: trunc_err_2_1}
\left\|\Hs(t_n,\Ge,t_m) - F_1(\ge_{n,m})\right\| \le  \bar{\beta}(t_{n-m}) h^2,
\end{equation}
where 
\begin{equation*}
  \bar{\beta}(t) =   \bdW \bdG'' \bdL^{1/2} \left( \sum^{\bar{M}}_{\substack{M=1 \\ M \text{~is odd~}}} \frac{M+1}{(M-1)!!}(\bdW \bdG \bdL^{1/2} t)^M \right).
\end{equation*}

Note that $\Hs(t_{n+1},\Ge,t_m) - F_2(\ges_{n,m})$ can be obtained by changing $t_n$ to $t_{n+1}$ in the expression of $\Hs(t_n,\Ge,t_m) - F_1(\ge_{n,m})$. Therefore its bound can be given by $\bar{\beta}(t_{n-m+1}) h^2$. Thus,
\begin{equation}\label{eq: trunc_err_2}
\left\|\frac{1}{2}h \left( \Hs(t_n,\Ge,t_m) + \Hs(t_{n+1},\Ge,t_m)  \right) - \frac{1}{2}h  \left(  F_1(\ge_{n,m}) + F_2(\ges_{n,m})   \right)\right\| 
  \le\bar{\beta}(t_{n-m+1}) h^3.
 \end{equation}
Inserting the estimates \eqref{eq: trunc_err_1} \eqref{eq: trunc_err_2} into \eqref{eq: conv_heuns_1}, we have the following estimation for Part 1 of \eqref{eq: part 1&2}:
\begin{equation} \label{eq: part 1 final}
  \begin{split}
&\underbrace{\left\| \Ge(t_{n+1},t_m) - \left[A_{n,m}(h)\Ge(t_{n},t_m) + \frac{1}{2}h  \left( B_{n,m}(h) F_1(\ge_{n,m}) + F_2(\ges_{n,m})   \right)   \right] \right\|}_{\text{Part 1}} \\
 & \hspace{200pt} \le \left(\frac{1}{4} \bdH \bdG''+ \frac{5}{12} \bdG'''  + \bar{\beta}(t_{n-m+1}) \right) h^3.
  \end{split}
\end{equation}

\subsubsection{Estimation of Part 2 in \eqref{eq: part 1&2}} 
The estimation for this part of the error is similar to that of the stochastic error. By the numerical scheme \eqref{def: scheme 1}, we have
\begin{equation}\label{_ccumulate_err_split}
  \begin{split}
&\left[A_{n,m}(h)\Ge(t_{n},t_m) + \frac{1}{2}h  \left( B_{n,m}(h) F_1(\ge_{n,m}) + F_2(\ges_{n,m})   \right)   \right] - G_{n+1,m}  \\
= & \ A_{n,m}(h) E_{n,m} + \frac{1}{2}h B_{n,m}(h)\left( F_1(\ge_{n,m}) - F_1(\gb_{n,m})\right) + \frac{1}{2}h \left( F_2(\ges_{n,m}) - F_2(\gb^*_{n,m})\right),
   \end{split}
\end{equation}
For the second term on the right-hand side, we can mimic the analysis of \eqref{eq: est mean value} to get 
\begin{equation}\label{accumulate_err_1}
\left\| F_1(\ge_{n,m}) - F_1(\gb_{n,m})\right\|  \le  8P_1(t_{n-m}) h \sum_{i = 1}^{n-m} \left( 2 + (n-m-i)h  \right) \|\eb_{n,m}\|_{\Gamma_{n,m}(i)}.
\end{equation}
For the third term, due to the same analysis, we have
\begin{equation} \label{eq: third term}
    \begin{split}
&\left\| F_2(\ges_{n,m}) - F_2(\gb^*_{n,m})\right\|\\
\le{} & 8P_1(t_{n-m+1}) h \left\{ \left\|\Ge(t_{n+1},t_m) - G^*_{n+1,m} \right\| + \sum_{i = 1}^{n-m} \left( 2 + (n-m+1-i)h  \right) \|\eb^*_{n,m}\|_{\Gamma^*_{n,m}(i)} \right\} 
 \end{split}
\end{equation}
where 
\begin{equation}
  \begin{split}
&\left\| \Ge(t_{n+1},t_m) - G^*_{n+1,m}  \right\|\\
={} & \left\|\left[\Ge(t_n,t_m) + h \frac{\partial}{\partial \sa}\Ge(t_n,t_m) + \frac{1}{2}h^2  \frac{\partial^2}{\partial \sa^2}\Ge(\nu_n,t_m) \right]- \left[  \left( I + \sgn(t_n-t)\ii H_s h\right)G_{n,m} + h F_1(\gb_{n,m}) \right] \right\|\\
={} & \left\|\left( I + \sgn(t_n-t)\ii H_s h\right) E_{n,m}  + h \left[  \Hs(t_n,\Ge,t_m) - F_1(\gb_{n,m}) \right] + \frac{1}{2}h^2  \frac{\partial^2}{\partial \sa^2}\Ge(\nu_n,t_m) \right\| \\
\le{} & \left\| \left( I + \sgn(t_n-t)\ii H_s h\right)  E_{n,m}\right\|  + h \left\|  \Hs(t_n,\Ge,t_m) - F_1(\ge_{n,m}) \right\| \\
& \hspace{150pt}+  h \left\|  F_1(\ge_{n,m}) - F_1(\gb_{n,m}) \right\| + \left\|\frac{1}{2}h^2  \frac{\partial^2}{\partial \sa^2}\Ge(\nu_n,t_m)\right\| .
    \end{split}
\end{equation}
Note that the second-order derivative above is the remainder of the Taylor expansion and should be interpreted as 
\begin{equation*}
    \frac{\partial^2}{\partial \sa^2}\Ge(\nu_{n},t_m) = \begin{pmatrix}
     \frac{\partial^2}{\partial \sa^2}\Ge^{(11)}(\nu^{(11)}_{n},t_m) &  \frac{\partial^2}{\partial \sa^2}\Ge^{(12)}(\nu^{(12)}_{n},t_m) \\
      \frac{\partial^2}{\partial \sa^2}\Ge^{(21)}(\nu^{(21)}_{n},t_m) &  \frac{\partial^2}{\partial \sa^2}\Ge^{(22)}(\nu^{(22)}_{n},t_m)
  \end{pmatrix}.
\end{equation*}

Using the previous results \eqref{eq: trunc_err_2_1} and \eqref{accumulate_err_1}, we have the bound 
\begin{equation}
  \begin{split}
 & \left\|\Ge(t_{n+1},t_m) - G^*_{n+1,m}\right\|\\
 \le{} & (1+\frac{1}{2}\bdH^2 h^2)\|E_{n,m}\| +  8P_1(t_{n-m}) h^2 \sum_{i = 1}^{n-m} \left( 2 + (n-m-i)h  \right) \|\eb_{n,m}\|_{\Gamma_{n,m}(i)}+\left(\bdG''h^2+\bar{\beta}(t_{n-m}) h^3\right)\\
 \le{} & (1+\frac{1}{2}\bdH^2 h^2)\|E_{n,m}\| +  8P_1(t_{n-m}) h^2 \sum_{i = 1}^{n-m} \left( 2 + (n-m-i)h  \right) \|\eb_{n,m}\|_{\Gamma_{n,m}(i)}+2\bdG''h^2
    \end{split}
\end{equation}
upon assuming $h \le \bdG''/\bar{\beta}(t_{n-m})$. Plugging the above estimate into \eqref{eq: third term}, we obtain
\begin{equation}\label{accumulate_err_2}
\left\| F_2(\ges_{n,m}) - F_2(\gb^*_{n,m})\right\| \le  28P_1(t_{n-m+1}) h \sum_{i = 1}^{n-m} \left( 2 + (n-m+1-i)h  \right) \|\eb^*_{n,m}\|_{\Gamma^*_{n,m}(i)}  + 16P_1(t_{n-m+1})\bdG'' h^3 
\end{equation}
which is a result similar to \eqref{eq: bound k_2^2}.

Now we plug the estimates \eqref{accumulate_err_1} and \eqref{accumulate_err_2} into \eqref{accumulate_err_split}, we obtain the following estimation for Part 2 of $E_{n+1,m}$:
\begin{equation} \label{eq: part 2 final}
  \begin{split}
&\underbrace{\left\|\left[A_{n,m}(h)\Ge(t_{n},t_m) + \frac{1}{2}h  \left( B_{n,m}(h) F_1(\ge_{n,m}) + F_2(\ges_{n,m})   \right)   \right] - G_{n+1,m} \right\|}_{\text{Part 2}} \\
\le {}& \left(1+\frac{1}{8}\bdH^4 h^4\right) \|E_{n,m}\| + 22P_1(t_{n-m+1}) h^2 \sum_{i = 1}^{n-m} \left( 2 + (n-m+1-i)h  \right) \|\eb^*_{n,m}\|_{\Gamma^*_{n,m}(i)} + 8P_1(t_{n-m+1})\bdG'' h^3.
 \end{split}
\end{equation}
By now, we can combine the estimates for Part 1 and \eqref{eq: part 1 final} Part 2 \eqref{eq: part 2 final}, so that the estimation \eqref{eq: part 1&2} yields the recurrence relation:
\begin{equation}\label{eq: Runge Kutta error}
  \begin{split}
&\|E_{n+1,m}\| \\
\le{} & \left(1+\frac{1}{8}\bdH^4 h^4 \right)\|E_{n,m}\| + 22P_1(t_{n-m+1}) h^2 \sum_{i = 1}^{n-m} \left( 2 + (n-m+1-i)h  \right) \|\eb^*_{n,m}\|_{\Gamma^*_{n,m}(i)} + P^{\text{e}}(t_{n-m+1})\cdot h^3,
  \end{split}
\end{equation}
where 
\begin{equation*}
     P^{\text{e}}(t) = \left( \frac{1}{4} \bdH + 8P_1(t) \right)\bdG''+ \frac{5}{12} \bdG'''  + \bar{\beta}(t) .
\end{equation*}

One may compare the recurrence relation above with \eqref{eq: recurrence 2 1/2} to find that the only difference between these two inequalities is the truncation error (last term). Therefore, we may simply replicate the procedures \eqref{eq: recurrence 2 1/2 simple}--\eqref{eq: induction n step} and conclude that the deterministic error has the exact same growth rate in the exponential part as the numerical error:
\begin{equation}\label{recurrence 3} 
    \|E_{n+1,m}\| \le  P^{\text{e}}(t_{n-m+1})  (1+\theta_1 \sqrt{P_1(t_{n-m+1})}h)^{n-m+1}\cdot h^2
\end{equation}
which can again be verified by mathematical induction. Therefore, we arrive at the estimate for the deterministic error at stated in Proposition \ref{thm: Runge Kutta error}.

\section{Conclusion}
\label{sec: conclusion}
We have presented a detailed analysis to study the error growth in the inchworm Monte Carlo method, emphasizing the trade-off between the numerical sign problem and error growth due to accumulation and amplification due to time marching. The result explains why the inchworm Monte Carlo method has a slower error growth than the classical quantum Monte Carlo method, and our analysis reveals how partial resummation trades-off the numerical sign problem and the error amplification. Our work points to the research direction of improving the time integrator to further suppress the error growth, which will be considered in future works.

\section*{Acknowledgement}
Zhenning Cai was supported by the Academic Research Fund of the Ministry of Education of Singapore under grant No. R-146-000-291-114. The work of Jianfeng Lu was supported in part by the National Science Foundation via grants DMS-1454939 and DMS-2012286.

\appendix 

\section{Formulas of the roots of characteristic polynomial} \label{sec: char poly}
Here we provide the formulas for $r_i$ appearing in \eqref{eq:A_j}. Let $\epsilon = c_2 h^2$. Then we have
\begin{equation} \label{eq: r_i}
   \begin{split}
   &r_1 = 1+\epsilon+\frac{\left(\frac{2}{3}\right)^{1 / 3} \epsilon(2+2 h+3 \epsilon)}{R}+\frac{1}{2^{1 / 3} \times 3^{2 / 3}}R,\\
&r_2 = 1+\epsilon-\frac{(1+ \sqrt{3}\ii) \epsilon (2+2 h+3 \epsilon)} {2^{2 / 3} \times 3^{1 / 3}R}+\frac{\mathrm{i}(\mathrm{i}+\sqrt{3})}{2 \times 2^{1 / 3} \times 3^{2 / 3}}R, \\
&r_3 = 1+\epsilon+\frac{\ii(\ii+ \sqrt{3}) \epsilon (2+2 h+3 \epsilon)} {2^{2 / 3} \times 3^{1 / 3}R}+\frac{\mathrm{i}(\mathrm{i}+\sqrt{3})}{2 \times 2^{1 / 3} \times 3^{2 / 3}} R
   \end{split}
\end{equation}
where 
\begin{equation}
    R = \left(\epsilon\left(9 h+18 \epsilon+18 h \epsilon+18 \epsilon^{2}+\sqrt{3} \sqrt{-4 \epsilon(2+2 h+3 \epsilon)^{3}+27(h+2 h \epsilon+2 \epsilon(1+\epsilon))^{2}}\right)\right)^{1 / 3}.
\end{equation}
When $h$ is small, it can be verified that $R \sim O(h)$. One can then see from \eqref{eq: r_i} that $r_i \sim O(h)$ for $i = 1,2,3$.

\section{Proofs related to the bias estimation of the inchworm Monte Carlo method}
\label{sec: recurrence 1}
In this appendix, we would like to complete the proof of the bias estimation for the inchworm Monte Carlo method. Specifically, the proofs of \eqref{eq: bound k_1}\eqref{eq: bound k_2} in Lemma \ref{lemma: deltak and deltak^2 integ diff} and the proof of \eqref{1st error upper bound} in Theorem \ref{thm: bounds} will be given below. The final result \eqref{final 1st error upper bound} can be obtained directly by the triangle inequality.

\begin{proof}[Proof of \eqref{eq: bound k_1} and \eqref{eq: bound k_2}]

\textbf{(i)} Estimate of $\left\|\E(\tK_1 - K_1)\right\|$:

We again use the relation \eqref{eq: relation F and G} to get
\begin{equation}
\E_{\vec{\sb}}(\tK_1 - K_1) = F_1 (\tg_{n,m}) - F_1(\gb_{n,m}).
\end{equation}
Then by Taylor expansion, we get 
\begin{equation}\label{eq: est k1 expansion}
\begin{split}
&\E\left(\tK_1^{(rs)} - K_1^{(rs)}\right)  
= \left(  \nabla F_1^{(rs)}(\gb_{n,m})\right)^{\TT}\cdot \E(\htg_{n,m} - \hg_{n,m}) + \\
&\hspace{140pt}\frac{1}{2}\E\left[(\htg_{n,m} - \hg_{n,m})^{\TT}\left(\nabla^2 F_1^{(rs)}(\xib_{n,m})\right) (\htg_{n,m} - \hg_{n,m}) \right]
\end{split}
\end{equation}
where $\xib_{n,m}$ is a convex combination of $\tg_{n,m}$ and $\gb_{n,m}$. 

The estimate for the first term on the right-hand side of the equation above is similar to \eqref{eq: est mean value} which is  
\begin{equation}\label{eq: est k1 Taylor 1}
    \left| \left(  \nabla F_1^{(rs)}(\gb_{n,m})\right)^{\TT}\cdot \E(\htg_{n,m} -  \hg_{n,m}) \right|  \le 4P_1(t_{n-m}) h \sum_{i = 1}^{n-m} \left( 2 + (n-m-i)h  \right)\left\| \E \left( \Delta \gb_{n,m}  \right) \right\|_{\Gamma_{n,m}(i)}. 
\end{equation}
For the second term on the right-hand side of \eqref{eq: est k1 expansion}, we use Proposition \ref{thm: second order derivative} to bound it by 
\begin{equation}\label{eq: est k1 Taylor 2}
  \begin{split}
&\left|\E\left[\left(\htg_{n,m} - \hg_{n,m}\right)^{\TT}\left(\nabla^2 F_1^{(rs)}(\xib_{n,m})\right) \left(\htg_{n,m} - \hg_{n,m}\right)\right] \right|  \\
={} & \left|\E \left(\sum_{(k_1,\ell_1) \in \Omega_{n,m}; \atop (k_2,\ell_2) \in \Omega_{n,m}} \sum_{p_1,q_1 =1,2; \atop p_2,q_2 =1 ,2}  \frac{\partial^2 F^{(rs)}_1(\xib_{n,m})}{\partial G_{k_1,\ell_1}^{(p_1 q_1)}\partial G_{k_2,\ell_2}^{(p_2 q_2)}}   \Delta G^{(p_1 q_1)}_{k_1,\ell_1} \Delta G^{(p_2 q_2)}_{k_2,\ell_2}  \right)\right|\\
\le{} & \left[  \left(\sum_{(k_1,\ell_1) \in \partial \Omega_{n,m}; \atop (k_2,\ell_2) \in \partial \Omega_{n,m}} + \sum_{(k_1,\ell_1) \in \partial \Omega_{n,m}; \atop (k_2,\ell_2) \in \mathring{\Omega}_{n,m}} +  \sum_{(k_1,\ell_1) \in  \mathring{\Omega}_{n,m}; \atop (k_2,\ell_2) \in \partial \Omega_{n,m}} + \sum_{(k_1,\ell_1) \in  \mathring{\Omega}_{n,m} ;\atop (k_2,\ell_2) \in \mathring{\Omega}_{n,m}} \right)  \right.  \\
&\hspace{150pt}  \left. \sum_{p_1,q_1 =1,2; \atop p_2,q_2 =1 ,2}  \E \left( \left|\frac{\partial^2 F^{(rs)}_1(\xib_{n,m})}{\partial G_{k_1,\ell_1}^{(p_1 q_1)}\partial G_{k_2,\ell_2}^{(p_2 q_2)}} \right| \right)  \right]\cdot \max_{(k,\ell)\in \Omega_{n,m}; \atop p,q=1,2}\E\left( \left| \Delta G_{k,\ell}^{(pq)}  \right|^2 \right)  \\
\le{} & \bar{\alpha}(t_{n-m})\left[ \Ns^{(\std)}_{\Omega_{n,m}}(\Delta \gb_{n,m}) \right]^2,
    \end{split}
\end{equation}
where 
\begin{equation*}
    \bar{\alpha}(t) = 16 P_2(t)(10t+16t^2+5t^3+\frac{1}{4}t^4).
\end{equation*}
Here we first need to count the number of the nodes in the set $\left|\partial \Omega_{n,m}\right| = 2(n-m)-1$ and $\left|\mathring{\Omega}_{n,m}\right| = \frac{1}{2}(n-m-1)(n-m-2)$. Then the last ``$\le$" above is done by combining the second-order derivatives of different magnitudes given in Proposition \ref{thm: second order derivative} with the corresponding number of such derivatives. For example, the first summation in the third line above together with the second-order derivatives such that conditions \textbf{(a)}-\textbf{(d)} are satisfied will contribute $P_2(t_{n-m})h \cdot (4+2\times 3(n-m)) \le  P_2(t_{n-m}) \cdot 10(t_{n-m})$ to the final estimate in the last line. Similar analysis apply to other three summations.




Inserting the two estimates above into \eqref{eq: est k1 expansion} gives us the evaluation for $\left\|\E(\tK_1 - K_1)\right\|$ as \eqref{eq: bound k_1}.

\textbf{(ii)} Estimate of $\left\|\E(\tK_2 - K_2)\right\|$:

Using the same method as the estimation of $\left\|\E(\tK_1 - K_1)\right\|$, we have
\begin{equation} \label{eq: K2}
\begin{split}
\left\|\E(\tK_2 - K_2)\right\| & \le 4P_1(t_{n-m+1}) h \left\{ \left\|\E(\tG^*_{n+1,m} - G^*_{n+1,m}) \right\| + \sum_{i = 1}^{n-m} \left( 2 + (n-m+1-i)h  \right) \left\| \E \left( \Delta \gb^*_{n,m}  \right) \right\|_{\Gamma^*_{n,m}(i)} \right\} \\
& \quad + \bar{\alpha}(t_{n-m+1})\left[ \Ns^{(\std)}_{\Omega^*_{n,m}}(\Delta \gb^*_{n,m}) \right]^2,
\end{split}
\end{equation}
which is similar to \eqref{eq: bound k_1}.
By the definition of $G^*_{n+1,m}$ and $\tG^*_{n+1,m}$ given in \eqref{def: scheme 1} and \eqref{def: scheme 2} respectively, we can estimate $\left\|\E(\tG^*_{n+1,m} - G^*_{n+1,m}) \right\|$ as
\begin{equation} \label{eq: EG^*}
\begin{split}
& \|\E(\tG^*_{n+1,m} - G^*_{n+1,m})\| \\
 \le{} & \left\|\E\left((  I + \sgn(t_n - t)\ii H_s h ) \Delta G_{n,m}\right) \right\| + h\|\E(\tK_1 - K_1)\| \\
\le{} & \left(1+\frac{1}{2}\bdH^2 h^2\right)\left\|\E\left( \Delta G_{n,m}\right) \right\| + 8P_1(t_{n-m}) h^2 \sum_{i = 1}^{n-m} \left( 2 + (n-m-i)h  \right)\left\| \E \left( \Delta \gb_{n,m}  \right) \right\|_{\Gamma_{n,m}(i)}\\
&\hspace{200pt}+ \bar{\alpha}(t_{n-m})h \left[ \Ns^{(\std)}_{\Omega_{n,m}}(\Delta \gb_{n,m}) \right]^2,
\end{split}
\end{equation}
where we have applied \eqref{eq: bound k_1} in the last inequality. Thus it remains only to bound
\begin{equation} \label{eq:Nstd}
\left[\Ns^{(\std)}_{\Omega^*_{n,m}}(\Delta \gb^*_{n,m}) \right]^2 = \max \left(\left[\Ns^{(\std)}_{\bar{\Omega}_{n,m}}(\Delta \gb^*_{n,m}) \right]^2,\E(\|\tG^*_{n+1,m} - G^*_{n+1,m}\|^2)\right),
\end{equation}
for which we just need to focus on the estimation of $\E(\|\tG^*_{n+1,m} - G^*_{n+1,m}\|^2)$. Again by the definitions \eqref{def: scheme 1} and \eqref{def: scheme 2}, we have
\begin{equation}
  \begin{split}
\E(\|\tG^*_{n+1,m} - G^*_{n+1,m}\|^2) \le 2  (1 + \bdH^2 h^2 ) \E(\|\Delta G_{n,m}\|^2) + 2h^2\E(\|\tK_1 - K_1\|^2).
      \end{split}
\end{equation}
By \eqref{eq: bound k_1^2}, we can estimate $\E(\|\tK_1 - K_1\|^2)$ as
\begin{equation}
  \begin{split}
   & \E(\|\tK_1 - K_1\|^2) \\
   \le & \  128P^2_1(t_{n-m}) h^2 \left\{  \sum_{i = 1}^{n-m} \left( 2 + (n-m-i)h  \right) \Ns^{(\std)}_{\Gamma_{n,m}(i)}(\Delta \gb_{n,m})  \right\}^2+ 8\bar{\gamma}(t_{n-m})\cdot \frac{1}{N_s} \\
   \le & \ 128 P^2_1(t_{n-m})(2+t_{n-m})^2 t^2_{n-m} \left[ \Ns^{(\std)}_{\Omega_{n,m}}(\Delta \gb_{n,m}) \right]^2 + 8\bar{\gamma}(t_{n-m})\cdot \frac{1}{N_s}
      \end{split}
\end{equation}
Therefore
\begin{equation*}
    \begin{split}
&\E(\|\tG^*_{n+1,m} - G^*_{n+1,m}\|^2)\\ 
 \le{} & 2  (1 + \bdH^2 h^2 ) \E(\|\Delta G_{n,m}\|^2) + 256 P^2_1(t_{n-m})(2+t_{n-m})^2 t^2_{n-m}h^2 \left[ \Ns^{(\std)}_{\Omega_{n,m}}(\Delta \gb_{n,m}) \right]^2+ 16\bar{\gamma}(t_{n-m})\cdot \frac{h^2}{N_s} \\
 \le{} & 2\left[1 + \left( \bdH^2  +  128 P^2_1(t_{n-m})(2+t_{n-m})^2 t^2_{n-m}\right)h^2     \right]\left[ \Ns^{(\std)}_{\Omega_{n,m}}(\Delta \gb_{n,m}) \right]^2 + 16\bar{\gamma}(t_{n-m})\cdot\frac{h^2}{N_s} \\
 \le{} & 4\left[ \Ns^{(\std)}_{\Omega_{n,m}}(\Delta \gb_{n,m}) \right]^2 + 16\bar{\gamma}(t_{n-m})\cdot\frac{h^2}{N_s}
    \end{split}
\end{equation*}
if $h\le 1/\sqrt{\bdH^2  +  128 P^2_1(t_{n-m})(2+t_{n-m})^2 t^2_{n-m}}$. Inserting this inequality into \eqref{eq:Nstd}, one obtains
\begin{equation}\label{eq: est delta G^2}
\left[ \Ns^{(\std)}_{\Omega^*_{n,m}}(\Delta \gb^*_{n,m}) \right]^2   \le 4\left[ \Ns^{(\std)}_{\bar{\Omega}_{n,m}}(\Delta \gb^*_{n,m}) \right]^2 + 16\bar{\gamma}(t_{n-m})\cdot\frac{h^2}{N_s}.
\end{equation}


Finally, the estimate \eqref{eq: bound k_2} can be obtained by inserting the estimates \eqref{eq: EG^*} and \eqref{eq: est delta G^2} into \eqref{eq: K2} and require $h \le \frac{1}{\sqrt{2 P_1(t_{n-m+1})}}$.
\end{proof}

\begin{proof}[Proof of \eqref{1st error upper bound}]

By \eqref{eq: formula first order error}, we estimate the bias by
\begin{equation}\label{eq: est first order error}
   \begin{split}
\|\E(\dG_{n+1,m})\| \le & \ \|A_{n,m}(h)\E(\dG_{n,m})\| + \frac{1}{2} h \|B_{n,m}(h)\E(\tK_1 - K_1)\| + \frac{1}{2} h \|\E(\tK_2 - K_2)\| \\
\le & \ \left(1+\frac{1}{8}\bdH^4 h^4\right)\|\E(\dG_{n,m})\| +  h \|\E(\tK_1 - K_1)\| + \frac{1}{2} h \|\E(\tK_2 - K_2)\|.
  \end{split}
\end{equation}
Now we can insert \eqref{eq: bound k_1} and \eqref{eq: bound k_2} into the above equation to get the recurrence relation stated in \eqref{eq: recurrence 1}, where the error $\Ns^{(\mathrm{std})}_{\bar{\Omega}_{n,m}}(\Delta \gb^*_{n,m})$ can be bounded by \eqref{2ed error upper bound}, resulting in
\begin{equation}
  \begin{split}
&\|\E(\dG_{n+1,m})\|\\
\le&  \   \left(1+\frac{1}{8}\bdH^4 h^4\right)\|\E(\dG_{n,m})\| 
+ 22P_1(t_{n-m+1}) h^2  \sum_{i = 1}^{n-m} \left( 2 + (n-m+1-i)h  \right)     \left\| \E ( \Delta \gb^*_{n,m}) \right\|_{\Gamma^*_{n,m}(i)}\\
& + \left(\frac{7}{2}\bar{\alpha}(t_{n-m+1})\theta^2_2 \bar{\gamma}(t_{n-m+1})\left(\ee^{2 \theta_1 \sqrt{P_1(t_{n-m+1})} t_{n-m+1}}\right) \cdot \frac{h^2}{N_s}  + 8 \bar{\alpha}(t_{n-m+1})\bar{\gamma}(t_{n-m+1})\cdot \frac{h^3}{N_s} \right) \\
\le&  \   \left(1+\frac{1}{8}\bdH^4 h^4\right)\|\E(\dG_{n,m})\| 
+ 22P_1(t_{n-m+1}) h^2  \sum_{i = 1}^{n-m} \left( 2 + (n-m+1-i)h  \right)  \left\| \E ( \Delta \gb^*_{n,m}) \right\|_{\Gamma^*_{n,m}(i)}\\
&+ 4\bar{\alpha}(t_{n-m+1})\theta^2_2 \bar{\gamma}(t_{n-m+1})\left(\ee^{2 \theta_1 \sqrt{P_1(t_{n-m+1})} t_{n-m+1}}\right) \cdot \frac{h^2}{N_s} 
  \end{split}
\end{equation}
upon assuming $h \le \frac{1}{16}$.

We notice that the above inequality is simply the recurrence relation \eqref{eq: Runge Kutta error} with the last term changed. Therefore, we can repeat the application of \eqref{eq: Runge Kutta error} and find the following estimate: 
\begin{equation}
    \|\E(\dG_{n+1,m})\| \le 4\theta^2_2 \bar{\alpha}(t_{n-m+1})\bar{\gamma}(t_{n-m+1})\left(\ee^{2 \theta_1 \sqrt{P_1(t_{n-m+1})} t_{n-m+1}}\right)(1+ \theta_1\sqrt{P_1(t_{n-m+1})}h)^{n-m+1}\cdot \frac{h}{N_s}
\end{equation}
which leads to the final estimate for the bias stated in \eqref{1st error upper bound}. 
\end{proof}

\section{Estimation of the derivatives of $F_1(\cdot)$}
\label{sec: derivatives}
In this appendix, we provide a detailed proof for the bounds of first-order derivatives of $F_1(\cdot)$. The proof for the second-order derivatives will only be sketched.

\subsection{Proof of Proposition \ref{thm: first order derivative}---Estimate the first-order derivatives}

\begin{proof}
We are looking for an upper bound for the derivative $\frac{ \partial F_1(\xib_{n,m})}{\partial G_{k,\ell}^{(pq)}}$ with $\xib_{n,m}$ being a convex combination of $\ge_{n,m}$, $\gb_{n,m}$ and $\tg_{n,m}$ defined by 
\begin{equation}
    \xib_{n,m}: = (\Xi_{m+1,m};\Xi_{m+2,m+1},\Xi_{m+2,m};\cdots;\Xi_{n,n-1},\cdots,\Xi_{n,m})
\end{equation}
satisfying 
\begin{equation}
    \Xi_{j,k} =  c_1 \Ge(t_j,t_k) + c_2  G_{j,k} + (1-c_1 -  c_2) \tG_{j,k}
\end{equation}
for some constants $0 \le c_1,c_2 \le 1$ and $c_1+c_2 \le 1$ given $m\le k<j \le n$. According to the assumption (H1) on the boundedness of $\Ge$, $G$ and $\tG$, we immediately have
\begin{equation}
    \|\Xi_{j,k}\| \le  \bdG \text{~for any~}  j,k = 0,1,\cdots,N-1,N^-,N^+,N+1,\cdots 2N-1,2N.
\end{equation}

In \eqref{def: scheme 1}, we write $F_1(\xib_{n,m})$ as 
\begin{equation*}
F_1(\xib_{n,m}) = \sgn(t_n - t) \sum^{\bar{M}}_{\substack{M=1 \\ M \text{~is odd~}}} \ii^{M+1}  \Fs_M(\xib_{n,m})   
\end{equation*}
where 
\begin{equation}\label{eq:deriv_FM}
\Fs_M(\xib_{n,m}):=  \int_{t_n > \vec{\sb} > t_m } (-1)^{\#\{\vec{\sb} < t\}}W_s I_h \Xi(t_n,s_M) W_s  \cdots W_s I_h \Xi(s_1,t_m) \Ls(t_n,\vec{\sb})  \ \dd \vec{\sb}.
\end{equation}
Therefore, we focus on the estimation of $\frac{ \partial \Fs_M(\xib_{n,m})}{\partial G_{k,\ell}^{(pq)}}$ for each odd integer $M$. The two cases given in equation \eqref{eq: 1st order bound} will be discussed separately below.

\paragraph{(I) $(k,\ell) \in \partial \Omega_{n,m}$}
This case includes $\frac{ \partial \Fs_M(\xib_{n,m})}{\partial G_{n,\ell}^{(pq)}}$ and $\frac{ \partial \Fs_M(\xib_{n,m})}{\partial G_{k,m}^{(pq)}}$. Here we only consider the derivative $\frac{ \partial \Fs_M(\xib_{n,m})}{\partial G_{n,\ell}^{(pq)}}$ since the analysis for the other is similar. 

For each $\Fs_M(\xib_{n,m})$, we split the derivative by  
\begin{equation} \label{eq: FM splitting}
\frac{\partial \Fs_M (\xib_{n,m})}{\partial G_{n,\ell}^{(pq)}} = \sum_{j = 0}^M \frac{\partial \Is_j (\xib_{n,m})}{\partial G_{n,\ell}^{(pq)}}.
\end{equation}
where
\begin{equation}\label{eq:deriv_Ij}
  \begin{split}
\Is_j(\xib_{n,m}) = &\int_{t_n > s_M > \cdots > s_{j+1} > t_{n-1}  > s_j  > \cdots > s_1 > t_m}  \\
&\hspace{30pt} (-1)^{\#\{\vec{\sb} < t\}}W_s  I_h \Xi(t_n,s_M) W_s I_h \Xi(s_M,s_{M-1}) \cdots W_s I_h \Xi(s_1,t_m) \Ls(t_n,\vec{\sb}) \ \dd \vec{\sb},
  \end{split}
\end{equation}
in which we require that $t_{n-1}$ is between $s_j$ and $s_{j+1}$.
We write the integrand of each $\Is_j(\xib_{n,m})$ as $\Gs^j_1 \times I_h \Xi(s_{j+1},s_j) \times \Gs^j_2$  where
\begin{equation}\label{def:G1 and G2}
 \begin{split}
\Gs^j_1 =\ &  (-1)^{\#\{\vec{\sb} < t\}} W_s  I_h \Xi(t_n,s_M) W_s \cdots W_s I_h \Xi(s_{j+2},s_{j+1})W_s, \\
\Gs^j_2 =\ & W_s  I_h \Xi(s_j,s_{j-1}) W_s  \cdots W_s I_h \Xi(s_1,t_m)\Ls(t_n,\vec{\sb})
  \end{split}
\end{equation}
for $1 \le j \le M-1$ (the formula for $j=0$ and $j=M$ is slightly different but easy to get). One can easily verify that $\Gs^j_2$ is completely independent from the factor $\Xi_{n,\ell}$ since the time sequence $\{s_1, \cdots, s_j\}$ is at least one time step away from $t_n$ and thus the interpolation of any $I_h \Xi(s_{i+1},s_i)$ in $\Gs^j_2$ never uses the value of $\Xi_{n,\ell}$. On the other hand, the value of $\Gs^j_1$ as well as the ``interface'' $I_h \Xi(s_{j+1},s_j)$ may or may not rely on $\Xi_{n,\ell}$, depending on how $\ell$ is given, as leads to the two cases we are going to discuss below.

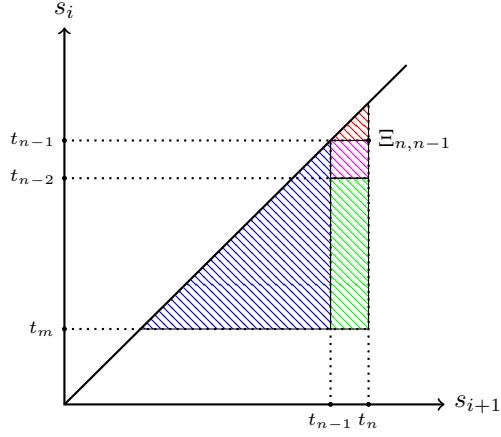
\begin{figure}[h]
\centering

\begin{tikzpicture}

\draw [<->,thick] (5,0)--(0,0)--(0,5);
\node [right] at (5,0) {$s_{i+1}$};
\node [above] at (0,5) {$s_i$};

\draw [thick] (0,0)--(4.5,4.5);
\draw [fill] (3.5,0) circle [radius = .025];\node[below] at (3.5,0)  {\scriptsize $t_{n-1}$};
\draw [fill] (4,0) circle [radius = .025];\node[below] at (4,0)  {\scriptsize $t_{n}$};
\draw [fill] (0,1) circle [radius = .025];\node[left] at (0,1)  {\scriptsize $t_{m}$};
\draw [fill] (0,3) circle [radius = .025];\node[left] at (0,3)  {\scriptsize $t_{n-2}$};
\draw [fill] (0,3.5) circle [radius = .025];\node[left] at (0,3.5)  {\scriptsize $t_{n-1}$};

\draw [dotted,thick] (3.5,0) -- (3.5,3.5); 
\draw [dotted,thick] (4,0) -- (4,4); 
\draw [dotted,thick] (0,1) -- (4,1); 
\draw [dotted,thick] (0,3) -- (4,3); 
\draw [dotted,thick] (0,3.5) -- (4,3.5); 

\draw [fill] (4,3.5) circle [radius = 0.025];
\node[right] at (4,3.5) {\small $\Xi_{n,n-1}$};
\draw [pattern = north west lines,pattern color=red] (3.5,3.5)--(4,3.5)--(4,4)--(3.5,3.5);
\draw [pattern = north west lines,pattern color=magenta] (3.5,3)--(4,3)--(4,3.5)--(3.5,3.5)--(3.5,3);
\draw [pattern = north west lines,pattern color=blue] (1,1)--(3.5,1)--(3.5,3.5)--(1,1);
\draw [pattern = north west lines,pattern color=green] (3.5,1)--(4,1)--(4,3)--(3.5,3)--(3.5,1);

\end{tikzpicture}

\caption{For each $\Is_j$, red triangle: the area where $(s_{i+1},s_i)$ pairs in $\Gs^j_1$ locate; blue triangle: the area where $(s_{i+1},s_i)$ pairs in $\Gs^j_2$ locate; pink square: the area where the value of $I_h \Xi(s_{j+1},s_j)$ is dependent on $\Xi_{n,n-1}$; green rectangle: the area where the value of $I_h \Xi(s_{j+1},s_j)$ is independent from $\Xi_{n,n-1}$.
 }
  \label{fig:deriv_case1_1}
\end{figure} 

\subparagraph{Case 1: $\ell=n-1$.}
This is the most complicated case in this proof. Note that $\Gs^j_1$ depends on $\Xi_{n,n-1}$ due to the fact that we get each $I_h \Xi(s_{i+1},s_i)$ in $\Gs^j_1$ by the interpolation $I_h \Xi(s_{i+1},s_i) = c_{i,1} \Xi_{n,n} + c_{i,2}\Xi_{n-1,n-1} + c_{i,3} \Xi_{n,n-1}$ with some coefficients $|c_i| < 1$. The ``interface" $I_h \Xi(s_{j+1},s_j)$ depends on $\Xi_{n,n-1}$ only when $s_j$ is restricted between $(t_{n-2}, t_{n-1})$ where $I_h \Xi(s_{j+1},s_j) = c_{j,1} \Xi_{n-1,n-1} + c_{j,2}\Xi_{n-1,n-2} + c_{j,3} \Xi_{n,n-1}+ c_{j,4} \Xi_{n,n-2}$. One may refer to Figure \ref{fig:deriv_case1_1} for a better understanding. 

For these reasons, we further divide the derivative $\frac{\partial\Is_j (\xib_{n,m})}{\partial G_{n,n-1}^{(pq)}}$ into two parts:
\begin{equation}\label{eq: deriv_Ij}
   \begin{split}
&\frac{\partial\Is_j (\xib_{n,m})}{\partial G_{n,n-1}^{(pq)}} =\\
 ={} & \int_{t_n > s_{M} > \cdots > s_{j+1} > t_{n-1}} \int_{t_{n-1} > s_j > t_{n-2}} \int_{s_j  > s_{j-1} >\cdots  >s_1  > t_m} \left[   \frac{\partial}{\partial \Xi_{n,n-1}^{(pq)}}\big(  \Gs^j_1  I_h \Xi(s_{j+1},s_j) \big)  \right] \Gs^j_2 \ \dd\vec{\sb} \\
& \hspace{45pt}+  \int_{t_n  > s_M > \cdots > s_{j+1} > t_{n-1}} \int_{t_{n-2} > s_j > t_{m}} \int_{s_j  > \cdots  > t_m} \left(   \frac{\partial}{\partial \Xi_{n,n-1}^{(pq)}}  \Gs^j_1   \right) I_h \Xi(s_{j+1},s_j) \Gs^j_2  \ \dd\vec{\sb}.
   \end{split}
\end{equation}
For the first integral above, we compute the derivative in the square bracket by
\begin{align*}
 &\frac{\partial}{\partial \Xi_{n,n-1}^{(pq)}}  \left( \Gs^j_1  I_h \Xi(s_{j+1},s_j) \right) =  \notag \\
 =& \   
 \sum_{i=j}^{M} (-1)^{\#\{\vec{\sb} < t\}} W_s  I_h \Xi(t_n,s_M) W_s \cdots W_s \left[  \frac{\partial}{\partial \Xi_{n,n-1}^{(pq)}} I_h \Xi(s_{i+1},s_i)  \right] W_s \cdots W_s I_h \Xi(s_{j+1},s_j)  \notag \\
 =&  \ \sum_{i=j}^{M} (-1)^{\#\{\vec{\sb} < t\}} W_s  I_h \Xi(t_n,s_M) W_s \cdots W_s c_{i,3}E_{pq} W_s \cdots W_s I_h \Xi(s_{j+1},s_j) .
 \end{align*} 
Here $E_{pq}$ is defined as a 2-by-2 matrix with its $pq-$entry being the only non-zero entry equal to 1. By the hypothesis (H1)(H3), this integral is therefore bounded by
\begin{equation}\label{eq: est_case_1_1_part_1}
 \begin{split}
&\left\| \int_{t_n > s_M > \cdots > s_{j+1} > t_{n-1}} \int_{t_{n-1} > s_j > t_{n-2}} \int_{s_j > s_{j-1} > \cdots > s_1 > t_m} \left[  \frac{\partial}{\partial \Xi_{n,n-1}^{(pq)}}  \Gs^j_1  I_h \Xi(s_{j+1},s_j)  \right] \Gs^j_2 \ \dd\vec{\sb} \right\|  \\
\le & \ (M-j+1) \bdW^{M+1}\bdG^M  (M!! \bdL^{\frac{M+1}{2}}) \times \int_{t_n > s_M > \cdots > s_{j+1} > t_{n-1}} \int_{t_{n-1} > s_j > t_{n-2}} \int_{s_j > s_{j-1} > \cdots > s_1 > t_m} 1 \ \dd\vec{\sb}  \\
\le & \  \bdW^{M+1}\bdG^M (M!! \bdL^{\frac{M+1}{2}}) \times (M-j+1) \times \frac{1}{(M-j)!(j-1)!}(t_{n-1} - t_m)^{j-1} h^{M-j+1}. 
   \end{split}
\end{equation}
We notice that the upper bound above consists of three components: (1) $ \bdW^{M+1}\bdG^M  (M!! \bdL^{\frac{M+1}{2}})$: the upper bound of the integrand; (2) $M-j+1$: the number of terms with the form $I_h \Xi$ whose values depend on $\Xi_{n,n-1}$; (3) $\frac{1}{(M-j)!(j-1)!}(t_{n-1} - t_m)^{j-1} h^{M-j+1}$: the area of the domain of integration. Similarly, we may directly write down the upper bound for the second integral in \eqref{eq: deriv_Ij}:
\begin{multline}\label{eq: est_case_1_1_part_2}
 \left\| \int_{t_n > s_M > \cdots > s_{j+1} > t_{n-1}} \int_{t_{n-2} > s_j > t_{m}} \int_{s_j > s_{j-1} > \cdots > s_1 > t_m} \left[  \frac{\partial}{\partial \Xi_{n,n-1}^{(pq)}}  \Gs^j_1   \right] I_h \Xi(s_{j+1},s_j) \Gs^j_2 \ \dd\vec{\sb} \right\|  \\
\le  \ \bdW^{M+1}\bdG^M  (M!! \bdL^{\frac{M+1}{2}}) \times (M-j) \times \frac{1}{(M-j)!j!} (t_{n-2} -t_m)^j h^{M-j}. 
 \end{multline}
Combining the estimation \eqref{eq: est_case_1_1_part_1} and \eqref{eq: est_case_1_1_part_2} yields
\begin{equation}\label{eq: deriv Ij bd}
\left\| \frac{\partial\Is_j (\xib_{n,m})}{\partial G_{n,n-1}^{(pq)}} \right\|\le \ 2\bdW^{M+1} \bdG^M  (M!! \bdL^{\frac{M+1}{2}}) \frac{M-j+1}{(M-j)!(j-1)!} (t_{n-1}-t_m)^j h^{M-j}.
\end{equation}
As we have mentioned previously, the upper bound we obtained above is for $1 \le j \le M-1$. For $j=0$ and $j=M$, we may return to \eqref{eq:deriv_Ij} and consider these two individual cases and we easily reach to the following results with similar argument:  
 \begin{align*}
&\left\| \frac{\partial\Is_0 (\xib_{n,m})}{\partial G_{n,n-1}^{(pq)}} \right\| \le \bdW^{M+1} \bdG^M \bdL^{\frac{M+1}{2}} \frac{M+1}{(M-1)!!}  h^{M},\\
&\left\| \frac{\partial\Is_M (\xib_{n,m})}{\partial G_{n,n-1}^{(pq)}} \right\| \le \bdW^{M+1} \bdG^M \bdL^{\frac{M+1}{2}} \frac{M}{(M-1)!!} (t_{n-1}-t_m)^{M-1} h.
\end{align*}
Now we sum up all the upper bounds for $\frac{\partial \Is_j (\xib_{n,m})}{\partial G_{n,n-1}^{(pq)}}$ and use the combination relation $(1+X)^n = \sum_{k \geq 0} \left({n \atop k}\right) X^k$ to get: 
\begin{equation}\label{eq: 1st-order_case_1_1}
    \begin{split}
\left\| \frac{\partial \Fs_M (\xib_{n,m})}{\partial G_{n,n-1}^{(pq)}} \right\| \le  \sum_{j = 0}^{M} \left\|\frac{\partial \Is_j (\xib_{n,m})}{\partial G_{n,n-1}^{(pq)}} \right\| \le 5\bdW^{M+1} \bdG^M \bdL^{\frac{M+1}{2}} \frac{M}{(M-3)!!} (t_n - t_m)^{M-1} h.
   \end{split}
\end{equation}



Here we remark that the argument above is only valid when the odd number $M$ is chosen to be greater than $1$ due to the number $(M-3)!!$ in the estimate \eqref{eq: 1st-order_case_1_1}, which is not defined when $M=1$. For $M=1$, we simply return to the definition \eqref{eq:deriv_FM} and follow similar procedures to reach to the result  
\begin{equation} \label{eq: 1st-order_case_1_2}
\left\| \frac{\partial \Fs_1 (\xib_{n,m})}{\partial G_{n,n-1}^{(pq)}} \right\|\le \ 2\bdW^2 \bdG \bdL h.
\end{equation}

\subparagraph{Case 2: $\ell < n-1$.}
For this case, one may check that $\Gs^j_1$ and $\Gs^j_2$ defined in \eqref{def:G1 and G2} are both independent of $\Xi_{n,\ell}$, while the ``interface" $I_h \Xi(s_{j+1},s_j)$ depends on $\Xi_{n,\ell}$ only when $t_{\ell-1}<s_j<t_{\ell+1}$. Therefore, we simply calculate the derivative for each $\Is_j$ by 
\begin{equation*}
\frac{\partial \Is_j (\xib_{n,m})}{\partial G_{n,\ell}^{(pq)}} =  \int_{t_n > s_M > \cdots > s_{j+1} > t_{n-1}} \int_{t_{\ell+1} > s_j > t_{\ell-1}} \int_{s_j > s_{j-1} > \cdots > s_1 > t_m} \Gs^j_1 \left(\frac{\partial }{\partial \Xi_{n,\ell}^{(pq)}} I_h \Xi(s_{j+1},s_j) \right)\Gs^j_2 \ \dd\vec{\sb}.
\end{equation*}  
Note that we need to choose $1\le j \le M-1$ and when $j=0$ or $j=M$, the corresponding derivative above vanishes. We follow the analysis for \emph{Case 1} and can also obtain upper bounds for all $\left\|\frac{\partial \Is_j (\xib_{n,m})}{\partial G_{n,\ell}^{(pq)}}\right\|$, summing up which leads to the estimate of $\left\| \frac{\partial \Fs_M (\xib_{n,m})}{\partial G_{n,\ell}^{(pq)}} \right\|$. By calculation similar to Case 1, the derivative $\left\| \frac{\partial \Fs_M (\xib_{n,m})}{\partial G_{n,\ell}^{(pq)}} \right\|$ can also be bounded by the same upper bounds given in \eqref{eq: 1st-order_case_1_1} for $M \geq 3$ and \eqref{eq: 1st-order_case_1_2} for $M=1$. 

Overall, we arrive at the conclusion that for any $(k,\ell) \in \partial \Omega_{n,m}$,
\begin{equation}
 \begin{split}
\left\| \frac{\partial \Fs_M (\xib_{n,m})}{\partial G_{k,\ell}^{(pq)}} \right\| \le 
\begin{cases}
2\bdW^2 \bdG \bdL h, &\text{if~} M=1 , \\
 5\bdW^{M+1} \bdG^M \bdL^{\frac{M+1}{2}} \frac{M}{(M-3)!!} (t_n - t_m)^{M-1} h, &\text{if~} M \geq 3. 
\end{cases}
   \end{split}
\end{equation}
To complete our estimation for $\frac{ \partial F_1(\xib_{n,m})}{\partial G_{k,\ell}^{(pq)}}$, we now sum up the bounds for $\left\| \frac{\partial \Fs_M (\xib_{n,m})}{\partial G_{k,\ell}^{(pq)}} \right\|$ up to $\bar{M}$ and get a uniform bound
\begin{equation} \label{eq: est_case_1}
\left\| \frac{\partial F_1 (\xib_{n,m})}{\partial G_{k,\ell}^{(pq)}} \right\| \le 
\left(  2\bdW^2 \bdG \bdL  +  5 \bdW^2 \bdG \bdL \sum^{\bar{M}}_{\substack{M=3 \\ M \text{~is odd}}}
 \frac{M}{(M-3)!!} \left(\bdW \bdG \bdL^{1/2} t_{n-m}\right)^{M-1}     \right)   \cdot h,
\end{equation}
which proves the first case in the Theorem \ref{thm: first order derivative}.

\paragraph{(II) $(k,\ell) \in  \mathring{\Omega}_{n,m}$}
To compute $\frac{\partial \Fs_M (\xib_{n,m})}{\partial G_{k,\ell}^{(pq)}}$, we need to first find all $I_h \Xi(s_{j+1},s_j)$ in $\Fs_M (\xib_{n,m})$ that depends on $\Xi_{k,\ell}^{(pq)}$. Since $(k,\ell) \in  \mathring{\Omega}_{n,m}$, only those $I_h \Xi(s_{j+1},s_j)$ such that $t_{\ell-1}<s_{j}<t_{\ell+1}$ and $t_{k-1} < s_{j+1} < t_{k+1}$ may depend on $\Xi_{k,\ell}$. 

We first consider a special case  $M=1$, where the derivative is simply given by
\begin{equation}
\frac{\partial \Fs_1 (\xib_{n,m})}{\partial G_{k,\ell}^{(pq)}} = \frac{\partial}{\partial \Xi_{k,\ell}^{(pq)}} \int^{t_n}_{t_m}(-1)^{\#\{s_1 < t\}} W_s I_h \Xi(t_n,s_1)W_s I_h \Xi(s_1,t_m)\Ls(t_n,s_1) \ \dd s_1.
\end{equation}  
It is easy to see that neither $I_h \Xi(t_n,s_1)$ nor $I_h \Xi(s_1,t_m)$ depends on $\Xi_{k,\ell}$, since both are interpolated by $\Xi$ values only on $\partial \Omega_{n,m}$. As the result, $\frac{\partial \Fs_1 (\xib_{n,m})}{\partial G_{k,\ell}^{(pq)}} = 0$.

When $M\geq 3$, we need to consider the following two possibilities:
\subparagraph{Case 1: $k - \ell \geq 2$.}
Similar to \eqref{eq: FM splitting}, we apply the following splitting of the integral in the definition of $\Fs_M$ \eqref{eq:deriv_FM}:
\begin{equation*}
\frac{\partial \Fs_M (\xib_{n,m})}{\partial G_{k,\ell}^{(pq)}} = \sum_{j = 1}^{M-1} \frac{\partial \Js_j (\xib_{n,m})}{\partial G_{k,\ell}^{(pq)}},
\end{equation*}
where 
\begin{align*}
\Js_j(\xib_{n,m}) = & \int_{t_n>\cdots>s_{j+2}>s_{j+1}} \int_{t_{k+1}>s_{j+1}>t_{k-1}}\int_{t_{\ell+1}>s_j>t_{\ell-1}}\int_{s_j>s_{j-1}>\cdots>t_m}  \\
& \hspace{20pt} (-1)^{\#\{\vec{\sb} < t\}}W_s  I_h \Xi(t_n,s_M) W_s I_h \Xi(s_M,s_{M-1}) \cdots W_s I_h \Xi(s_1,t_m) \Ls(t_n,\vec{\sb}) \ \dd \vec{\sb}.
\end{align*}
Here a critical observation is that once we assume $t_{\ell-1}<s_{j}<t_{\ell+1},\ t_{k-1} < s_{j+1} < t_{k+1}$ for any fixed $j$, $I_h \Xi(s_{j+1},s_j)$ is then the unique term in the integrand of $\Fs_M (\xib_{n,m})$ that depends on $\Xi_{k,\ell}$ since $(t_{\ell-1},t_{\ell+1}) \cap (t_{k-1},t_{k+1}) = \emptyset$ when $k - \ell \geq 2$. This observation is illustrated in Figure \ref{fig:deriv_case2_1}.

\begin{figure}[h]
\centering
\begin{tikzpicture}

\draw [->] (1,0) -- (9,0);
\node at (9.25,0) {$t$};

\draw [fill] (2,0) circle [radius=.025];
\node at (2,0) {$($};
\node at (2,-0.4) { $t_{\ell-1}$};
\draw [fill] (3,0) circle [radius=.025];
\draw [fill] (2.5,0) circle [radius=.025];
\node at (2.5,0.4) { $s_j$};
\node at (3,-0.4) { $t_\ell$};
\draw [fill] (3,0) circle [radius=.025];
\draw [fill] (4,0) circle [radius=.025];
\node at (4,0) {$)$};
\node at (4,-0.4) { $t_{\ell+1}$};
\node at (4.75,-0.4) {$\cdots$};
\draw [fill] (5.5,0) circle [radius=.025];
\node at (5.5,0) {$($};
\node at (5.5,-0.4) { $t_{k-1}$};
\draw [fill] (6.5,0) circle [radius=.025];
\node at (6.5,-0.4) { $t_k$};
\draw [fill] (7,0) circle [radius=.025];
\node at (7,0.4) { $s_{j+1}$};
\draw [fill] (7.5,0) circle [radius=.025];
\node at (7.5,0) {$)$};
\node at (7.5,-0.4) { $t_{k+1}$};
\
\end{tikzpicture}
\caption{Locations of $s_j$ and $s_{j+1}$ in \emph{Case 1}.}
  \label{fig:deriv_case2_1}
\end{figure}
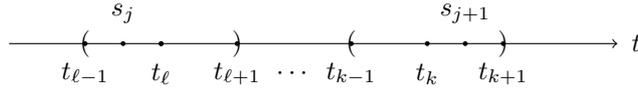 

We again write the integrand above as $\Gs^j_1 \times I_h \Xi(s_{j+1},s_j) \times \Gs^j_2$ defined exactly the same as in \eqref{def:G1 and G2}, then $\Gs^j_1$ and $\Gs^j_2$ are both independent from $\Xi_{k,\ell}$. Therefore, 
\begin{equation*}
    \begin{split}
   \left\| \frac{\partial\Js_j (\xib_{n,m})}{\partial G_{k,\ell}^{(pq)}} \right\|  \le & \ \int_{t_n>\cdots>s_{j+1}} \int_{t_{k+1}>s_{j+1}>t_{k-1}}\int_{t_{\ell+1}>s_j>t_{\ell-1}}\int_{s_j>\cdots>t_m} \left\| \Gs^j_1 \left( \frac{ \partial}{\partial \Xi_{k,\ell}^{(pq)}} I_h \Xi(s_{j+1},s_j) \right)  \Gs^j_2  \right\| \ \dd \vec{\sb} \\
  \le & \ 4\bdW^{M+1} \bdG^M \bdL^{\frac{M+1}{2}} \frac{M!!}{(M-j-1)!(j-1)!}  (t_{n}-t_m)^{M-2}h^2,
   \end{split}
\end{equation*}
which leads to
\begin{equation}\label{eq: est_case_2_1}
\left\|\frac{\partial \Fs_M (\xib_{n,m})}{\partial G_{k,\ell}^{(pq)}}\right\| =  \sum_{j = 1}^{M-1} \left\| \frac{\partial \Js_j (\xib_{n,m})}{\partial G_{k,\ell}^{(pq)}} \right\| \le 4\bdW^{M+1} \bdG^M \bdL^{\frac{M+1}{2}} \frac{M}{(M-3)!!}(2t_{n-m})^{M-2} h^2.
\end{equation}

\subparagraph{Case 2: $k - \ell = 1$.}
There is a overlapping region $(t_{\ell-1},t_{\ell+1}) \cap (t_{k-1},t_{k+1}) = (t_{\ell},t_{\ell+1})$ in this case. Consequently, there can be multiple terms in the integrand depending on $\Xi_{k,\ell}$. To estimate the derivative $\frac{\partial \Fs_M (\xib_{n,m})}{\partial G_{k,\ell}^{(pq)}}$, we further divide it into three parts based on the distribution of the time sequence $\vec{\sb}$ in the integrand:
\begin{equation*}
 \frac{\partial \Fs_M (\xib_{n,m})}{\partial G_{k,\ell}^{(pq)}}  =   \sum_{v=2}^M \sum_{u=0}^{M-v} \frac{\partial \Ks^{u,v}_1}{\partial \Xi_{k,\ell}^{(pq)}} +  \left(\sum_{v=1}^{M-1} \sum_{u=1}^{M-v}  \frac{\partial \Ks^{u,v}_{2,L}}{\partial \Xi_{k,\ell}^{(pq)}} +  \sum_{v=1}^{M-1} \sum_{u=0}^{M-v-1} \frac{\partial \Ks^{u,v}_{2,R}}{\partial \Xi_{k,\ell}^{(pq)}} \right) + \sum_{v=0}^{M-2} \sum_{u=1}^{  M-v-1 }  \frac{\partial \Ks^{u,v}_3}{\partial \Xi_{k,\ell}^{(pq)}}
\end{equation*}
where 
\begin{equation*} 
 \begin{split}
 \Ks^{u,v}_1 = & \ \int_{t_n > s_M > \cdots >s_{u+v+1}>t_{k+1}}\int_{t_{\ell+1}>s_{u+v}>\cdots>s_{u+1}>t_\ell} \int_{t_{\ell-1}>s_u>\cdots>s_1>t_m} 
\\
& \hspace{80pt} \Gs_1^{u,v} \times I_h \Xi(s_{u+v},s_{u+v-1})W_s\cdots W_s I_h \Xi(s_{u+2},s_{u+1}) \times \Gs_2^{u,v} \ \dd \vec{\sb}; \\ 
 \Ks^{u,v}_{2,L} = & \ \int_{t_n > s_M > \cdots >s_{u+v+1}>t_{k+1}}\int_{t_{\ell+1}>s_{u+v}>\cdots>s_{u+1}>t_\ell}\int_{t_\ell > s_u > t_{\ell-1}} \int_{s_u>s_{u-1}>\cdots>s_1>t_m} 
\\
& \hspace{80pt} \Gs_1^{u,v} \times I_h \Xi(s_{u+v},s_{u+v-1})W_s\cdots W_s I_h \Xi(s_{u+1},s_{u}) \times \Gs_2^{u,v}  \ \dd \vec{\sb}, \\ 
 \Ks^{u,v}_{2,R} = & \ \int_{t_n > s_M > \cdots >s_{u+v+2}>s_{u+v+1}}\int_{t_{k+1>}s_{u+v+1}>t_k}\int_{t_{\ell+1}>s_{u+v}>\cdots>s_{u+1}>t_\ell}\int_{t_{\ell-1}>s_u>\cdots>s_1>t_m} 
\\
& \hspace{80pt} \Gs_1^{u,v} \times I_h \Xi(s_{u+v+1},s_{u+v})W_s\cdots W_s I_h \Xi(s_{u+2},s_{u+1}) \times \Gs_2^{u,v}  \ \dd \vec{\sb}; \\ 
 \Ks^{u,v}_{3} = & \ \int_{t_n > s_M > \cdots >s_{u+v+1}}\int_{t_{k+1>}s_{u+v+1}>t_k}\int_{t_{\ell+1}>s_{u+v}>\cdots>s_{u+1}>t_\ell}\int_{t_\ell > s_u > t_{\ell-1}}\int_{s_u>\cdots>s_1>t_m} 
\\
& \hspace{80pt} \Gs_1^{u,v} \times I_h \Xi(s_{u+v+1},s_{u+v})W_s\cdots W_s I_h \Xi(s_{u+1},s_{u}) \times \Gs_2^{u,v}  \ \dd \vec{\sb}. 
 \end{split}
\end{equation*}
With a slight abuse of notation, here $\Gs_1^{u,v}$ and $ \Gs_2^{u,v}$ denote the products with the form ``$W_s I_h \Xi \cdots W_s I_h \Xi$"  that complete the integrand.  Each $\Ks^{u,v}$ here represents a part of the integral $\Fs_M(\xib_{n,m})$ where there are $v$ time points in $\vec{\sb}$ locating in $(t_\ell,t_{\ell+1})$. These cases are illustrated in Figure \ref{fig:deriv_case2_2}: 
\begin{itemize}
\item In $\Ks^{u,v}_1$, no time point other than these $v$ points $s_{u+1}, \cdots, s_{u+v}$ is in the interval $(t_{\ell-1},t_{k+1})$.
\item In $\Ks^{u,v}_{2,L}$ (or $\Ks^{u,v}_{2,R}$), there exists at least one point other than $s_{u+1}, \cdots, s_{u+v}$ locating in $(t_{\ell-1},t_{\ell})$ (or $(t_{k},t_{k+1})$ ) while no time point appears in $(t_{k},t_{k+1})$ (or $(t_{\ell-1},t_{\ell})$).
\item In $\Ks^{u,v}_3$, there exists at least one point other than $s_{u+1}, \cdots, s_{u+v}$ in both $(t_{k},t_{k+1})$ and $(t_{\ell-1},t_{\ell})$.
\end{itemize}

By splitting $\Fs_M(\xib_{n,m})$ in this way, one may easily check that $\Gs_1^{u,v}$ and $\Gs_2^{u,v}$ in each $\Ks^{u,v}$ are all independent of $\Xi_{k,\ell}$, while all $I_h \Xi$ in between depend on $\Xi_{k,\ell}$. Therefore, we can now compute $\frac{\partial \Ks^{u,v}}{\partial \Xi_{k,\ell}^{(pq)}}$ as the product of the derivative of these $I_h \Xi$. Mimicking the previous analysis in \eqref{eq: est_case_1_1_part_1} and \eqref{eq: est_case_1_1_part_2}, we may bound the first summation 
\begin{equation}
    \begin{split}
 &\sum_{v=2}^M \sum_{u=0}^{M-v} \left\| \frac{\partial \Ks^{u,v}_1}{\partial \Xi_{k,\ell}^{(pq)}} \right\| \\
 \le & \   \sum_{v=2}^M \sum_{u=0}^{M-v} \bdW^{M+1} \bdG (M!! \bdL^{\frac{M+1}{2}}) \times (v-1) \times \frac{(t_n - t_{k+1})^{M-u-v}}{(M-u-v)!}\cdot \frac{h^v}{v!} \cdot \frac{(t_{\ell -1} - t_m)^u}{u!} \\
 \le & \ \sum_{v=2}^M  \bdW^{M+1} \bdG (M!! \bdL^{\frac{M+1}{2}}) \frac{v-1}{v!} (t_{n-m-1})^{M-v} h^v \sum_{u=0}^{M-v} \frac{1}{(M-v-u)!u!} \\
 = & \ \sum_{v=2}^M  \bdW^{M+1} \bdG (M!! \bdL^{\frac{M+1}{2}}) \frac{v-1}{(M-v)!v!} (2t_{n-m-1})^{M-v} h^v  \\
 = & \  \bdW^{M+1} \bdG (M!! \bdL^{\frac{M+1}{2}}) (2t_{n-m-1})^{M-2} h^2 \sum_{v=0}^{M-2} \frac{(M-2)!}{(M-2-v)!v!}\left( \frac{h}{2t_{n-m-1}}\right)^v \cdot \frac{1}{(M-2)!(v+2)} \\
 \le & \ \frac{1}{2} \bdW^{M+1} \bdG (M!! \bdL^{\frac{M+1}{2}})\frac{1}{(M-2)!}(2t_{n-m-1}+h)^{M-2}h^2 \\
 \le & \ \frac{1}{2}\bdW^{M+1} \bdG \bdL^{\frac{M+1}{2}} \frac{M}{(M-3)!!}(2t_{n-m})^{M-2}h^2.
 \end{split}
\end{equation}
Similarly, we may estimate the other summations as 
\begin{equation*}
    \left(\sum_{v=1}^{M-1} \sum_{u=1}^{M-v} \left\| \frac{\partial \Ks^{u,v}_{2,L}}{\partial \Xi_{k,\ell}^{(pq)}} \right\|+  \sum_{v=1}^{M-1} \sum_{u=0}^{M-v-1} \left\| \frac{\partial \Ks^{u,v}_{2,R}}{\partial \Xi_{k,\ell}^{(pq)}} \right\| \right) \le 2 \bdW^{M+1} \bdG \bdL^{\frac{M+1}{2}} \frac{M}{(M-3)!!}(2t_{n-m})^{M-2}h^2
\end{equation*}
and 
\begin{equation*}
    \sum_{v=0}^{M-2} \sum_{u=1}^{  M-v-1 }  \left\| \frac{\partial \Ks^{u,v}_3}{\partial \Xi_{k,\ell}^{(pq)}} \right\| \le   \bdW^{M+1} \bdG \bdL^{\frac{M+1}{2}} \frac{(M-1)M}{(M-3)!!}(2t_{n-m})^{M-2}h^2.
\end{equation*}
Therefore, we get the estimate
\begin{equation}\label{eq: est_case_2_2}
\left\| \frac{\partial \Fs_M (\xib_{n,m})}{\partial G_{k,\ell}^{(pq)}} \right\| \le  3\bdW^{M+1} \bdG^M \bdL^{\frac{M+1}{2}}\frac{(M-1)M}{(M-3)!!}(2t_{n-m})^{M-2}h^2.
\end{equation} 
Note that the upper bound above is strictly greater than the \emph{Case 1} bound given in \eqref{eq: est_case_2_1}. Therefore, we may use the upper bound in \eqref{eq: est_case_2_2} as the uniform bound for both cases.

\begin{figure}[h]
\centering

\begin{tikzpicture}

\draw [->] (1,0) -- (9,0);
\node at (9.25,0) {$t$};
\draw [fill] (2,0) circle [radius=.025];
\node at (2,0) {$($};
\node at (2,-0.4) { $t_{\ell-1}$};
\draw [fill] (4,0) circle [radius=.025];
\node at (4,0) {$($};
\node at (4,-0.4) { $t_\ell$};\node at (4,-0.8){\small $(t_{k-1})$};
\draw [fill] (6,0) circle [radius=.025];
\node at (6,0) {$)$};
\node at (6,-0.4) { $t_{\ell+1}$}; \node at (6,-0.8) {\small $(t_k)$};
\draw [fill] (8,0) circle [radius=.025];
\node at (8,0) {$)$};
\node at (8,-0.4) { $t_{k+1}$};

\draw [fill] (4.2,0) circle [radius=.025];\draw [fill] (4.4,0) circle [radius=.025];\draw [fill] (4.6,0) circle [radius=.025];\draw [fill] (4.8,0) circle [radius=.025];\draw [fill] (5,0) circle [radius=.025];\draw [fill] (5.2,0) circle [radius=.025];\draw [fill] (5.4,0) circle [radius=.025];\draw [fill] (5.6,0) circle [radius=.025];\draw [fill] (5.8,0) circle [radius=.025];

\draw [fill] (1.5,0) circle [radius=.025];\draw [fill] (8.5,0) circle [radius=.025];

\draw[decoration={brace,amplitude=10pt},decorate] (4,0)--(6,0);\node at (5,0.6) {\small $s_{u+1},s_{u+2},\dots,s_{u+v}$};

\node at (1.5,0.4) {\small $s_u$};\node at (8.8,0.4) {\small $s_{u+v+1}$};

\node at (0,0) {$\Ks^{u,v}_1$:};

\end{tikzpicture}

\begin{tikzpicture}

\draw [->] (1,0) -- (9,0);
\node at (9.25,0) {$t$};
\draw [fill] (2,0) circle [radius=.025];
\node at (2,0) {$($};
\node at (2,-0.4) { $t_{\ell-1}$};
\draw [fill] (4,0) circle [radius=.025];
\node at (4,0) {$($};
\node at (4,-0.4) { $t_\ell$};\node at (4,-0.8){\small $(t_{k-1})$};
\draw [fill] (6,0) circle [radius=.025];
\node at (6,0) {$)$};
\node at (6,-0.4) { $t_{\ell+1}$}; \node at (6,-0.8) {\small $(t_k)$};
\draw [fill] (8,0) circle [radius=.025];
\node at (8,0) {$)$};
\node at (8,-0.4) { $t_{k+1}$};

\draw [fill] (4.2,0) circle [radius=.025];\draw [fill] (4.4,0) circle [radius=.025];\draw [fill] (4.6,0) circle [radius=.025];\draw [fill] (4.8,0) circle [radius=.025];\draw [fill] (5,0) circle [radius=.025];\draw [fill] (5.2,0) circle [radius=.025];\draw [fill] (5.4,0) circle [radius=.025];\draw [fill] (5.6,0) circle [radius=.025];\draw [fill] (5.8,0) circle [radius=.025];

\draw [fill] (3,0) circle [radius=.025];\draw [fill] (8.5,0) circle [radius=.025];

\draw[decoration={brace,amplitude=10pt},decorate] (4,0)--(6,0);\node at (5,0.6) {\small $s_{u+1},s_{u+2},\dots,s_{u+v}$};

\node at (3,0.4) {\small $s_u$};\node at (8.8,0.4) {\small $s_{u+v+1}$};

\node at (0,0) {$\Ks^{u,v}_{2,L}$:};

\end{tikzpicture}

\begin{tikzpicture}
\draw [->] (1,0) -- (9,0);
\node at (9.25,0) {$t$};
\draw [fill] (2,0) circle [radius=.025];
\node at (2,0) {$($};
\node at (2,-0.4) { $t_{\ell-1}$};
\draw [fill] (4,0) circle [radius=.025];
\node at (4,0) {$($};
\node at (4,-0.4) { $t_\ell$};\node at (4,-0.8){\small $(t_{k-1})$};
\draw [fill] (6,0) circle [radius=.025];
\node at (6,0) {$)$};
\node at (6,-0.4) { $t_{\ell+1}$}; \node at (6,-0.8) {\small $(t_k)$};
\draw [fill] (8,0) circle [radius=.025];
\node at (8,0) {$)$};
\node at (8,-0.4) { $t_{k+1}$};

\draw [fill] (4.2,0) circle [radius=.025];\draw [fill] (4.4,0) circle [radius=.025];\draw [fill] (4.6,0) circle [radius=.025];\draw [fill] (4.8,0) circle [radius=.025];\draw [fill] (5,0) circle [radius=.025];\draw [fill] (5.2,0) circle [radius=.025];\draw [fill] (5.4,0) circle [radius=.025];\draw [fill] (5.6,0) circle [radius=.025];\draw [fill] (5.8,0) circle [radius=.025];

\draw [fill] (1.5,0) circle [radius=.025];\draw [fill] (7,0) circle [radius=.025];

\draw[decoration={brace,amplitude=10pt},decorate] (4,0)--(6,0);\node at (5,0.6) {\small $s_{u+1},s_{u+2},\dots,s_{u+v}$};

\node at (1.5,0.4) {\small $s_u$};\node at (7.2,0.4) {\small $s_{u+v+1}$};

\node at (0,0) {$\Ks^{u,v}_{2,R}$:};

\end{tikzpicture}

\begin{tikzpicture}
\draw [->] (1,0) -- (9,0);
\node at (9.25,0) {$t$};
\draw [fill] (2,0) circle [radius=.025];
\node at (2,0) {$($};
\node at (2,-0.4) { $t_{\ell-1}$};
\draw [fill] (4,0) circle [radius=.025];
\node at (4,0) {$($};
\node at (4,-0.4) { $t_\ell$};\node at (4,-0.8){\small $(t_{k-1})$};
\draw [fill] (6,0) circle [radius=.025];
\node at (6,0) {$)$};
\node at (6,-0.4) { $t_{\ell+1}$}; \node at (6,-0.8) {\small $(t_k)$};
\draw [fill] (8,0) circle [radius=.025];
\node at (8,0) {$)$};
\node at (8,-0.4) { $t_{k+1}$};

\draw [fill] (4.2,0) circle [radius=.025];\draw [fill] (4.4,0) circle [radius=.025];\draw [fill] (4.6,0) circle [radius=.025];\draw [fill] (4.8,0) circle [radius=.025];\draw [fill] (5,0) circle [radius=.025];\draw [fill] (5.2,0) circle [radius=.025];\draw [fill] (5.4,0) circle [radius=.025];\draw [fill] (5.6,0) circle [radius=.025];\draw [fill] (5.8,0) circle [radius=.025];

\draw [fill] (3,0) circle [radius=.025];\draw [fill] (7,0) circle [radius=.025];

\draw[decoration={brace,amplitude=10pt},decorate] (4,0)--(6,0);\node at (5,0.6) {\small $s_{u+1},s_{u+2},\dots,s_{u+v}$};

\node at (3,0.4) {\small $s_u$};\node at (7.2,0.4) {\small $s_{u+v+1}$};

\node at (0,0) {$\Ks^{u,v}_3$:};

\end{tikzpicture}

\caption{Distribution of the time sequence $\vec{\sb}$ in $\Ks^{u,v}_1$, $\Ks^{u,v}_{2,L}$, $\Ks^{u,v}_{2,R}$, $\Ks^{u,v}_3$.}
\label{fig:deriv_case2_2}
\end{figure} 

As a summary, we have obtained the following result: for $(k,\ell) \in \mathring{\Omega}_{n,m}$,
\begin{align*}
\left\| \frac{\partial \Fs_M (\xib_{m,n})}{\partial G_{k,\ell}^{(pq)}} \right\|\le
\begin{cases}
0, & \text{if}~ M=1, \\
3\bdW^{M+1} \bdG^M \bdL^{\frac{M+1}{2}}\frac{(M-1)M}{(M-3)!!}(2t_{n-m})^{M-2}h^2,& \text{if}~ M \geq 3.
\end{cases}
\end{align*}
Summing up these estimates, we see that for odd $\bar{M}>1$,
\begin{equation}\label{eq: est_case_2}
\left\| \frac{\partial F_1 (\xib_{n,m})}{\partial G_{k,\ell}^{(pq)}} \right\| \le   \sum^{\bar{M}}_{\substack{M=1 \\ M ~\text{is odd}}}  \left\| \frac{\partial \Fs_M (\xib_{n,m})}{\partial G_{k,\ell}^{(pq)}} \right\| \le  \left( 3\bdW^3 \bdG^2 \bdL^{\frac{3}{2}}  \sum^{\bar{M}}_{\substack{M=3 \\ M ~\text{is odd}}} \frac{(M-1)M}{(M-3)!!}(2\bdW \bdG \bdL^{1/2}t_{n-m})^{M-2} \right) \cdot h^2.
\end{equation}

By now, all the cases have been discussed, and the final conclusion \eqref{eq: 1st order bound} is a simple combination of \eqref{eq: est_case_1} and \eqref{eq: est_case_2}, as completes the proof.
\end{proof}

\subsection{Proof of Proposition \ref{thm: second order derivative}---Estimate the second-order derivatives}

The proof of Proposition \ref{thm: second order derivative} is quite tedious and does not shed much light. Moreover the error contributed by the second-order derivatives play a less important role in our final result. Thus we will only provide the outline of the proof stating the idea without technical details.

By the definition of $F_1(\cdot)$, we decompose the second-order derivative as 
\begin{equation} \label{eq: 2nd order derivative}
\frac{\partial^2 F_1 (\xib_{n,m})}{\partial G_{k_1,\ell_1}^{(p_1 q_1)}\partial G_{k_2,\ell_2}^{(p_2 q_2)}} =\sgn(t_n - t) \sum^{\bar{M}}_{\substack{M=1 \\ M \text{~is odd~}}} \ii^{M+1} \frac{\partial^2 \Fs_M(\xib_{n,m})}{\partial G_{k_1,\ell_1}^{(p_1 q_1)}\partial G_{k_2,\ell_2}^{(p_2 q_2)}} ,
\end{equation}
where the definition of the function $\Fs_M(\xib_{n,m})$ is given in \eqref{eq:deriv_FM}. Since each $I_h \Xi$ is obtained via linear interpolation, one can easily check the following result:
\begin{lemma}
If $\frac{\partial^2 \Fs_M(\xib_{n,m})}{\partial G_{k_1,\ell_1}^{(p_1 q_1)}\partial G_{k_2,\ell_2}^{(p_2 q_2)}} $ is non-zero, there exist at least two factors $I_h \Xi(s_{j_1},s_{j_1 -1})$ and $I_h \Xi(s_{j_2},s_{j_2 -1})$ in the integrand of \eqref{eq:deriv_FM} with $1\le j_1 \neq j_2 \le M+1$ such that
\begin{equation}\label{claim:non zero derivative}
    t_{k_i-1} < s_{j_i} < t_{k_i+1} \text{~and~} t_{\ell_i-1} < s_{j_i-1} < t_{\ell_i+1}, \quad \text{for } i=1,2,
\end{equation}
where we define $s_{M+1} = t_n$ and $s_{0} = t_m$.
\label{lemma: 2nd order}
\end{lemma}
The entire argument in the rest of this section will be based on this result. Similar to the proof of the bounds for the first-order derivatives, we need to take into account four possibilities for the locations of $(k_1,\ell_1)$ and $(k_2, \ell_2)$.

\paragraph{(I) $(k_1,\ell_1) \times (k_2,\ell_2) \in \partial \Omega_{n,m} \times \partial \Omega_{n,m}$}
Since $\partial \Omega_{n,m}$ includes two sides (see Figure \ref{fig:sets}), we are going to study the two cases where the two nodes $(k_1, \ell_1)$ and $(k_2, \ell_2)$ are on the same/different sides.
 
\subparagraph{Case 1: $k_1 = k_2 =n$ or $\ell_1 = \ell_2 =m$.}
This is the case where $(k_1,\ell_1)$ and $(k_2,\ell_2)$ are on the same side of $\partial \Omega_{n,m}$. Here we only focus on the case $k_1 = k_2 =n$ and the analysis for the other case $\ell_1 = \ell_2 =m$ is similar. We may check that 
\begin{itemize}
\item If $\ell_1,\ell_2 \le n-2$, by Lemma \ref{lemma: 2nd order}, in order that the second-order derivative $\frac{\partial^2 \Fs_M(\xib_{n,m})}{\partial G_{k_1,\ell_1}^{(p_1 q_1)}\partial G_{k_2,\ell_2}^{(p_2 q_2)}}$ is nonzero, there exist distinct $j_1$ and $j_2$ such that
\begin{gather}
\label{eq: cond1}
t_{n-1} < s_{j_1} \le t_{n}, \qquad t_{\ell_1 -1} < s_{j_1 -1} < t_{\ell_1 + 1} \le t_{n-1}, \\
\label{eq: cond2}
t_{n-1} < s_{j_2} \le t_{n}, \qquad t_{\ell_2 -1} < s_{j_2 -1} < t_{\ell_2 + 1} \le t_{n-1}.
\end{gather}
However, the conditions \eqref{eq: cond1} and \eqref{eq: cond2} contradict each other because the point $t_{n-1}$ can only locate between one pair of adjacent points in the sequence $\vec{\sb}$.
Thus $ \frac{\partial^2 F_1(\xib_{n,m})}{\partial G_{n,\ell_1}^{(p_1 q_1)}\partial G_{n,\ell_2}^{(p_2 q_2)}}$ is always zero. 

\item If $\ell_1 = n-1$ and $m+1 \le \ell_2 \le n-2$ (same for $\ell_2 = n-1$ and $m+1 \le \ell_1 \le n-2$), the corresponding conditions are
\begin{gather*}
t_{n-1} < s_{j_1} \le t_{n}, \qquad t_{n-2} < s_{j_1 -1} < t_n, \\
t_{n-1} < s_{j_2} \le t_{n}, \qquad t_{\ell_2 -1} < s_{j_2 -1} < t_{\ell_2 + 1} \le t_{n-1}.
\end{gather*}
Such $j_1$ and $j_2$ can be found only if there is at least one point in $\vec{\sb}$ between $t_{n-1}$ and $t_n$. Therefore when $M > 1$
\begin{equation} \label{eq: m gt 1}
\begin{split}
\frac{\partial^2 \Fs_M(\xib_{n,m})}{\partial G_{n,n-1}^{(p_1 q_1)}\partial G_{n,\ell_2}^{(p_2 q_2)}} &=
\sum_{r=1}^{M-1} \int_{t_n > s_{M} > \cdots > s_{M-r+1} > t_{n-1}} \int_{t_{\ell_2 + 1} > s_{M-r} > t_{\ell_2 -1} }\int_{s_{M-r} > \cdots > s_1 > t_m} \\
& \qquad (-1)^{\#\{\vec{\sb} < t\}} \frac{\partial}{\partial \Xi_{n,n-1}^{(p_1 q_1)}} \left( W_s I_h \Xi(t_n,s_M) W_s \cdots  I_h \Xi(s_{M-r+2},s_{M-r+1}) \right) \times \\
& \qquad \frac{\partial}{\partial \Xi_{n,\ell_2}^{(p_2 q_2)}} \Big( W_s I_h \Xi(s_{M-r+1},s_{M-r}) W_s \cdots I_h \Xi(s_1,t_m)\Big) \Ls(t_n,\vec{\sb})  \ \dd \vec{\sb}.
\end{split}
\end{equation}
When $M = 1$, the derivative is zero. The magnitude of the above sum can be observed from the sizes of the integral domains. The leading-order term is provided by $r = 2$, which gives
\begin{equation}\label{eq:2ed deriv same leg}
\frac{\partial^2 F_1 (\xib_{n,m})}{\partial G_{n,\ell_1}^{(p_1 q_1)}\partial G_{n,\ell_2}^{(p_2 q_2)}}  \sim O(h^2).
\end{equation}

\item If $\ell_1 = n-1$ and $\ell_2 = m$ (same for $\ell_2 = n-1$ and $\ell_1 = m$), the analysis for $M > 1$ is the same as \eqref{eq: m gt 1}. When $M = 1$, we have
\begin{equation}\label{eq:analysis O(h) M=1}
\begin{split}
\frac{\partial^2 \Fs_M(\xib_{n,m})}{\partial G_{n,n-1}^{(p_1 q_1)}\partial G_{n,\ell_2}^{(p_2 q_2)}} &=
\int_{t_{n-1}}^{t_n} (-1)^{\#\{s_1 < t\}} \frac{\partial}{\partial \Xi_{n,n-1}^{(p_1 q_1)}} \left( W_s I_h \Xi(t_n,s_1) \right) \frac{\partial}{\partial \Xi_{n,\ell_2}^{(p_2 q_2)}} \Big( W_s I_h \Xi(s_1,t_m)\Big) \Ls(t_n,s_1)  \ \dd s_1 \\
& \sim O(h).
\end{split}
\end{equation}
Therefore the derivative \eqref{eq: 2nd order derivative} also has magnitude $O(h)$.

\item If $\ell_1 = \ell_2 = n-1$, we need to find distinct $j_1$ and $j_2$ such that
\begin{gather*}
t_{n-1} < s_{j_1} \le t_{n}, \qquad t_{n-2} < s_{j_1 -1} < t_n, \\
t_{n-1} < s_{j_2} \le t_{n}, \qquad t_{n-2} < s_{j_2 -1} < t_{n}.
\end{gather*}
These conditions can be satisfied only if at least two points in $\vec{\sb}$ are in $(t_{n-2}, t_n)$, as also results in \eqref{eq:2ed deriv same leg}.
\end{itemize}

\subparagraph{Case 2: $k_1  =n,\ell_2 =m$ or $\ell_1 = m,k_2 =n$.}
This is the case where $(k_1,\ell_1)$ and $(k_2,\ell_2)$ are on the different sides of $\partial \Omega_{n,m}$ in Figure \ref{fig:sets}. Again we focus only one case $k_1  =n,\ell_2 =m$, and the other case is similar.

\begin{itemize}
  
\item   If $|\ell_1 - k_2 | > 1$ (same for $|\ell_2 - k_1 | > 1$ when $\ell_1 = m,k_2 = n$), we similarly propose the conditions
\begin{gather*}
t_{n-1} < s_{j_1} \le t_{n}, \qquad t_{\ell_1-1} < s_{j_1 -1} < t_{\ell_1 + 1}, \\
t_{k_2-1} < s_{j_2} < t_{k_2+1}, \qquad t_{m} \le s_{j_2 -1} < t_{m+1}.
\end{gather*}
When $M = 1$, the derivative is zero. When $M>1$, the leading-order term in $\frac{\partial^2\Fs_M(\xib_{n,m})}{\partial G_{n,\ell_1}^{(p_1 q_1)}\partial G_{k_2,m}^{(p_2 q_2)}}$ is the part of integral where we let $I_h \Xi(s_{j_1},s_{j_1 -1}) = I_h \Xi(t_n,s_{M})$ and $I_h \Xi(s_{j_2},s_{j_2 -1}) = I_h \Xi(s_1,t_m)$ respectively in \eqref{claim:non zero derivative}. As a result, we have the restriction $t_{\ell_1 -1} < s_M < t_{\ell_1 + 1},t_{k_2 -1} < s_1 < t_{k_2 + 1}$, which leads to 
\begin{equation}\label{eq:2ed deriv diff leg}
\frac{\partial^2  F_1(\xib_{n,m})}{\partial G_{n,\ell_1}^{(p_1 q_1)}\partial G_{k_2,m}^{(p_2 q_2)}}   \sim O(h^2).
\end{equation}

\item  If $|\ell_1 - k_2 | \le 1$ (same for $|\ell_2 - k_1 | \le 1$ when $\ell_1 = m,k_2 = n$), we propose exactly the same conditions for $|\ell_1 - k_2|>1$. When $M>1$, the derivative again has magnitude $O(h^2)$ by the same analysis. When $M=1$, we may obtain the result that the derivative again has magnitude $O(h)$ following a similar reasoning as \eqref{eq:analysis O(h) M=1} upon setting $\max(t_{\ell_1 -1},t_{k_2 -1}) < s_1 < \min(t_{\ell_1 +1},t_{k_2 +1})$.

\end{itemize}

\paragraph{(II) If $(k_1,\ell_1) \times (k_2,\ell_2) \in \partial \Omega_{n,m} \times  \mathring{\Omega}_{n,m}$}
In this case, we have $k_1 =n$ or $\ell_1 =m$. One can easily check that the derivative vanishes when $M=1$. For $M>1$, we may first assume $k_1 = n$. To find out the leading order term in each $\frac{\partial^2 \Fs_M(\xib_{n,m})}{\partial G_{n,\ell_1}^{(p_1 q_1)}\partial G_{k_2,\ell_2}^{(p_2 q_2)}} $, we consider the following two cases:

\begin{itemize}
\item If $|k_2 - \ell_1| \le 1$, we require $\max(t_{\ell_1 -1},t_{k_2 -1}) < s_M < \min(t_{\ell_1 +1},t_{k_2 +1})$ and $t_{\ell_2 -1} < s_{M-1} < t_{\ell_2 +1}$ so that we can set $I_h \Xi(s_{j_1},s_{j_1 -1}) = I_h \Xi(t_n,s_M)$ and $I_h \Xi(s_{j_2},s_{j_2 -1}) = I_h \Xi(s_M,s_{M-1})$ in \eqref{claim:non zero derivative}. Since in this case we need to restrict at least $s_M$ and $s_{M-1}$, we have $\frac{\partial^2  F_1(\xib_{n,m})}{\partial G_{n,\ell_1}^{(p_1 q_1)}\partial G_{k_2,\ell_2}^{(p_2 q_2)}}  \sim O(h^2)$.  

\item If $|\ell_1 - k_2| > 1$, we require $t_{\ell_1 -1 } < s_M < t_{\ell_1 + 1}$ and $t_{k_2 -1} < s_{j+1} < t_{k_2 +1},t_{\ell_2 -1} < s_j < t_{\ell_2 +1}$ for some $1 \le j\le M-1$ so that we set $I_h \Xi(s_{j_1},s_{j_1 -1}) = I_h \Xi(t_n,s_M)$ and $I_h \Xi(s_{j_2},s_{j_2 -1}) = I_h \Xi(s_{j+1},s_{j})$ in \eqref{claim:non zero derivative}. Since in this case we need to restrict at least $s_M,s_{j+1}$ and $s_j$, we have $\frac{\partial^2  F_1(\xib_{n,m})}{\partial G_{n,\ell_1}^{(p_1 q_1)}\partial G_{k_2,\ell_2}^{(p_2 q_2)}}  \sim O(h^3)$.   
\end{itemize}

Similar results can be obtained for the case when $\ell_1 =m$. So we now have the conclusion \eqref{eq:2ed deriv bdxint}.

\paragraph{(III) If $(k_1,\ell_1) \times (k_2,\ell_2) \in   \mathring{\Omega}_{n,m} \times \partial \Omega_{n,m}$} The reasoning is similar to (II).

\paragraph{(IV) If $(k_1,\ell_1)\times(k_2,\ell_2) \in  \mathring{\Omega}_{n,m} \times  \mathring{\Omega}_{n,m} $} Again, we can check that the derivative is non-zero only when $M>1$ and we may assume $k_1 \geq k_2$. We also have the following two cases:

\begin{itemize}

\item If $|k_2-\ell_1| \le 1$, we require $t_{k_1 -1}  < s_{j+2} < t_{k_1 +1}$, $\max(t_{\ell_1 -1},t_{k_2-1})  < s_{j+1} < \min(t_{\ell_1 +1},t_{k_2 + 1})$ and $t_{\ell_2 -1}  < s_{j} < t_{\ell_2 +1}$ for some $1\le j\le M-2$ so that we can set $I_h \Xi(s_{j_1},s_{j_1 -1}) = I_h \Xi(s_{j+2},s_{j+1})$ and $I_h \Xi(s_{j_2},s_{j_2 -1}) = I_h \Xi(s_{j+1},s_{j})$ in \eqref{claim:non zero derivative}. Since in this case we need to restrict at least $s_{j+2},s_{j+1}$ and $s_j$, we have $\frac{\partial^2  F_1(\xib_{n,m})}{\partial G_{k_1,\ell_1}^{(p_1 q_1)}\partial G_{k_2,\ell_2}^{(p_2 q_2)}}  \sim O(h^3)$.  

\item If $|k_2-\ell_1| > 1$, we require $t_{k_i -1}  < s_{j_i + 1} < t_{k_i +1}$ and $t_{\ell_i -1}  < s_{j_i} < t_{\ell_i +1}$ with $i=1,2$ for $1\le j_i \le M-1$ and $j_1 - j_2 > 1$ and set $I_h \Xi(s_{j_1},s_{j_1 -1}) = I_h \Xi(s_{j_1+1},s_{j_1})$ and $I_h \Xi(s_{j_2},s_{j_2 -1}) = I_h \Xi(s_{j_2 +1},s_{j_2})$ in \eqref{claim:non zero derivative}. Since in this case we need to restrict at least $s_{j_1+1},s_{j_1},s_{j_2 + 1}$ and $s_{j_2}$, we have $\frac{\partial^2  F_1(\xib_{n,m})}{\partial G_{k_1,\ell_1}^{(p_1 q_1)}\partial G_{k_2,\ell_2}^{(p_2 q_2)}}  \sim O(h^4)$.  

\end{itemize}

Similar analysis and result can be given for $k_1 < k_2$. Therefore, we arrive at \eqref{eq:2ed deriv intxint}.

\bibliographystyle{plain}
\bibliography{inchworm_reference}
\end{document}